\documentclass[reqno]{amsart}
\theoremstyle{plain}
\usepackage{latexsym}
\usepackage{amssymb, color}
\usepackage{amsmath}
\usepackage{amsthm}
\usepackage{amsfonts}
\usepackage{mathrsfs}
\usepackage{enumerate}
\usepackage{epsfig}
\usepackage[sc]{mathpazo}
\textheight 8.4 in
\textwidth 5.7 in
\oddsidemargin 0.26 in                   % this is for the left margin
\evensidemargin 0.26 in \topmargin .0 in

\def\XXint#1#2#3{{\setbox0=\hbox{$#1{#2#3}{\int}$}
     \vcenter{\hbox{$#2#3$}}\kern-.5\wd0}}

\newcommand{\cl}{\mathrm{curl}}
\newcommand{\Cl}{\nabla\times}
\newcommand{\ep}{\epsilon}
\newcommand{\mb}{\mathbb}
\newcommand{\R}{\mathbb{R}}

\newcommand{\lt}{\left}
\newcommand{\rt}{\right}
\newcommand{\g}{\mathcal{G}_{h_0}}
\newcommand{\h}{\check{H}^1(\mathbb R^3;\mathbb R^3)}
\newcommand{\supp}{\mathrm{supp}}
\newcommand{\ti}{\tilde}
\newcommand{\qd}{\quad}
\newcommand{\dist}{\mathrm{dist}}
\newcommand{\nl}{\newline}

\numberwithin{equation}{section}

\newtheorem{theorem}{Theorem}
\newtheorem{proposition}[theorem]{Proposition}
\newtheorem{lemma}[theorem]{Lemma}
\newtheorem{corollary}[theorem]{Corollary}

%\newtheorem{a1}{Lemma}
%\newtheorem{a2}[a1]{Theorem}
%\newtheorem{a3}[a1]{Conjecture}
%\newtheorem{a4}[a1]{Question}
%\newtheorem{a5}[a1]{Proposition}
%\newtheorem{a6}[a1]{Corollary}
%\newtheorem{deff}[a1]{Definition}
%\newtheorem*{thm1}{Theorem}

%\theoremstyle{remark}
%\newtheorem{remark}{Remark}
%\newtheorem{claim}{Claim}
%\newenvironment{clpr}{\it Proof of Claim \rm}{\dia}
%\newenvironment{preproof}{\bf Preproof\rm}{}
%\newenvironment{remark}{\underline{\bf Remark\rm}}{}
%\newenvironment{defin}{\underline{\it Definition:\rm}}{}
%\newenvironment{pr}{\bf Proof \rm}{\box}

%
%
%
%
%
%%%%%%%%%%%%%%%%%%%%%%%%%%%%%%%%%%%%%%%%%%%%%%%%%%%%%%%%%%%
%% TITLE PAGE                                            %%
%%%%%%%%%%%%%%%%%%%%%%%%%%%%%%%%%%%%%%%%%%%%%%%%%%%%%%%%%%%

\title[First critical field of highly anisotropic $3$d superconductors]{First critical field of highly anisotropic three-dimensional superconductors via a vortex density model}
\author{Andres Contreras}
\address{Department of Mathematical Sciences\\New Mexico State University\\Las Cruces, NM 88003.}
\email{acontre@nmsu.edu.}
\author{Guanying Peng}
\address{Department of Mathematics\\University of Arizona\\Tucson, AZ 85721.}
\email{gypeng@math.arizona.edu.}

\begin{document}

\begin{abstract}

We analyze a mean field model for $3$d anisotropic superconductors with a layered structure, in the presence of a strong magnetic field. The mean field model arises as the $Gamma$-limit of the Lawrence-Doniach energy in certain regimes. A reformulation of the problem based on convex duality allows us to characterize the first critical field $H_{c_1}$ of the layered superconductor, up to leading order. In previous work, Alama-Bronsard-Sandier \cite{ABS} have derived the asymptotic value of $H_{c_1}$ for configurations satisfying periodic boundary conditions; in that setting describing minimizers of the Lawrence-Doniach energy reduces to a $2$d problem. In this work, we treat the physical case without any periodicity assumptions, and are thus led to studying a delicate and essentially $3$d non-local obstacle problem first derived by Baldo-Jerrard-Orlandi-Soner \cite{BJOS2} for the isotropic Ginzburg-Landau energy. We obtain a characterization of $H_{c_1}$ using the special anisotropic structure of the mean field model.
\end{abstract}

\maketitle

\section{Introduction} 

In this paper, we investigate a mean field model that describes the limiting behavior of a $3$d highly anisotropic cylindrical superconductor with layered structure. The state of the superconductor in response to an external magnetic influence is described at large scales in terms of a normalized vorticity. Our main goal is to characterize the asymptotic value of the applied field strength at which the sample transitions from a purely superconducting state to a mixed one where vortex defects appear in the interior.

The mathematical model for the anisotropic superconductor is the Lawrence-Doniach description. The layered structure in the Lawrence-Doniach functional can be observed in high temperature superconductors (e.g., the cuprates). Significant differences can be observed in the properties of these materials with respect to isotropic superconductors (for the latter type, the standard Ginzburg-Landau model is more suitable). Motivated by these differences, Lawrence and Doniach \cite{LD} proposed an alternate description where a layered anisotropic superconductor would not be treated as a continuous solid but as a stack of thin parallel superconducting layers. Mathematically, the layers interact through nonlinear Josephson coupling. Below, we recall the Lawrence Doniach model. The Josephson penetration depth $\lambda>0$ is a fixed constant that depends on the material.

Let $\Omega\subseteq\R^2$ be a smooth bounded simply-connected domain. Let $L>0$ be a constant and $N>0$ be an integer. Let $D$ be the $3$d cylindrical domain $D:=\Omega\times (0,L),$ and let $s:=L/N$ be the inter-layer distance. 
The stack of supercodunctors is subjected to an external field $h_{ex}\vec{e}_3 ,$ where $h_{ex}$ is the intensity of the field. The Lawrence-Doniach energy is given by
\begin{equation*}%\label{LD}
\begin{split}
\mathcal{G}_{LD}^{\epsilon, s}(\{u_n\}_{n=0}^N, \vec{A})&= s\sum^N_{n=0} \int_\Omega\left[\frac{1}{2}|\hat{\nabla}_{\hat{A}_{n}}u_n|^2+\frac{(1-|u_n|^2)^2}{4\epsilon^2}\right]d\hat{x}\\
&+s\sum^{N-1}_{n=0}\int_\Omega\frac{1}{2\lambda^{2}s^2}\lt|u_{n+1}-u_{n}e^{\imath \int_{ns}^{(n+1)s}A^{3}dx_{3}}\rt|^{2}d\hat{x}\\
&+\frac{1}{2}\int_{\mathbb{R}^3}\lt|\nabla\times{\vec{A}}-h_{ex}\vec{e}_3\rt|^{2}dx.
\end{split}
\end{equation*}
Above, for each $n=0, \ldots , N,$ the function $u_n:\Omega\rightarrow\mb C$ corresponds to the wave map or order parameter, as in $2$d Ginzburg-Landau, of the $n$th layer. The effect of the applied magnetic field is made manifest in the induced potential $\vec{A}=(A^1,A^2,A^3):\mb R^3\rightarrow \mb R^3.$  The induced magnetic field is then given by $\nabla\times\vec{A} = (\partial_2 A^3-\partial_3 A^2,\partial_3 A^1-\partial_1 A^3,\partial_1 A^2-\partial_2 A^1)$. Other notations in the above energy are explained at the beginning of subsection \ref{s1.1}.

In phenomenological models of superconductors, the strength of an applied magnetic field has a great influence in the nature of minimizers of the energy.  More precisely, there are two critical values $H_{c_1}$ and $H_{c_3}$ of the field intensity $h_{ex}$ at which superconductors undergo phase transitions from the superconducting state to the mixed state (coexistence of superconducting and normal states), and from the mixed state to the normal state, respectively. In the London limit, that is when $\ep \to 0,$ these critical fields are expected to obey $H_{c_1}\sim|\ln\epsilon|$ and $H_{c_3}\sim\frac{1}{\epsilon^2}.$ Understanding the vortex structure of minimizers is of central importance when deriving asymptotics for the critical fields $H_{c_1}$ and $H_{c_3}.$  In $2$d Ginzburg-Landau, the behavior of minimizers and their vorticities, in different regimes of the strength of the applied field, is now well understood. For a detailed discussion where very precise asymptotics for the vorticity are derived, see the book \cite{SS}, the references therein and also \cite{SS1,JS, JerrardSpirn}.  For asymptotics valid near $H_{c_3},$ see \cite{GP, LP, HP, FH}. Configurations with a diverging number of vortices were analyzed in \cite{vorlatt} and in \cite{LVCS}; these correspond to global and local minimizers respectively.

In contrast with $2$d models, the $3$d situation is not as well understood. Recently, $\Gamma$-convergence results for the $3$d isotropic Ginzburg-Landau model in different energy regimes were obtained in \cite{BJOS1}. For a characterization of $H_{c_1}$ in $3$d valid for general domains, see \cite{BJOS2} (see also \cite{roman}). In \cite{ABM}, the authors constructed local minimizers (presumably global for certain ranges of the applied field) in a ball. Up to $o(1)$ asymptotics for $H_{c_1}$ are derived in \cite{co,cost} for thin superconductors. Finally, a characterization of the superconducting region for much higher values of the applied field in a superconducting shell is obtained in \cite{CLPersistence} based on a reduction to a double-sided obstacle problem. In general, a big problem in extending results from $2$d to $3$d lies in the description of the vorticity region which in the two dimensional case corresponds to a union of points, while in higher dimensions it can be given by very complex and nonsmooth structures. A notable challenge in deriving a more refined asymptotic expansion of the energy in $3$d is due to the fact that without a satisfactory description of vortices in this setting, an interaction energy of defects cannot be extracted. For a result in this direction see \cite{NPVF}.

	 Now, in what pertains to the $3$d anisotropic setting, more specifically for the Lawrence-Doniach energy, an analysis of minimizers for $h_{ex}$ in the regimes $h_{ex}\sim|\ln\ep|$ and $|\ln\ep|\ll h_{ex}\ll 1/\ep^2$ has been done by Alama-Bronsard-Sandier \cite{ABS} under certain periodicity assumptions. They also studied the cases when the magnetic fields are parallel to the layers or oblique in \cite{ABS} and \cite{ABS2}. Without the periodicity assumptions, a  great simplification to a mean field model in the form of a $\Gamma$-convergence result with $h_{ex}\sim|\ln\ep|$ is achieved by the second author in \cite{Pe}. In a higher regime, an asymptotic formula for the minimum Lawrence-Doniach energy with $|\ln\epsilon|\ll h_{ex}\ll\epsilon^{-2}$ in the limit as $(\ep,s)\rightarrow (0,0)$ is obtained in \cite{BP} together with information of the vortex structure. In the regime $h_{ex}\geq\frac{C}{\epsilon^2}$, it was shown by Bauman-Ko \cite{BK} that if $C$ is sufficiently large, all minimizers of the Lawrence-Doniach energy are in the normal phase. A similar result is known for the $2$d and $3$d Ginzburg-Landau energy (see \cite{GP}). Despite the explicit computations of the first critical field in the periodic case, a characterization of $H_{c_1}$ cannot be obtained using the same tools in the general case; under periodicity assumptions, additional structure is imposed that reduces the problem to a $2$d one. This reduction is not available in the situation contemplated here. The main goal of this paper is to investigate a mean field model that captures the limiting behavior of the Lawrence-Doniach energy when the intensity of the magnetic field is in the regime $h_{ex}\sim|\ln\epsilon|.$ Using a dual formulation, first derived for the isotropic Ginzburg-Landau energy \cite{BJOS2}, we obtain a first characterization of $H_{c_1}$ in terms of the solution to a non-local obstacle problem; this is in high contrast with the $2$d case in which the inverse of the maximum value of a Helmholtz equation gives the coefficient of the main order term for the first critical field. Even though the non-local nature of the equations obtained impedes an explicit expression for $H_{c_1}$ in the general $3$d isotropic case, we exploit the generalized cylindrical setting of the Lawrence-Doniach model and the particular form of its corresponding $\Gamma$-limit to give an explicit description of the intensity of the field that forces a nontrivial vorticity region in the sample. Our characterization is the first to provide an asymptotic for this value, valid in the physical case with natural boundary conditions (no periodicity assumptions.)

%\vskip.3in
%{\bf Leading order of the first critical field via a non-local obstacle problem}
%\vskip.2in
\subsection{Leading order of the first critical field via a non-local obstacle problem}\label{s1.1}

In order to present our main theorems, we introduce notation that will allow us to specify the contributions coming from variations within a layer (i.e. representing two dimensional quantities). To be precise, denote $\hat{x}:=(x_1,x_2)\mbox{ and }\hat{\nabla}:=(\partial_{1},\partial_{2}).$ For the magnetic potential, denote $\hat{A}:=(A^1,A^2),\mbox{ and  the trace of $\hat{A}$ on the $n$th layer by }\hat{A}_n(\hat{x}):=(A^{1}(\hat{x},ns),A^{2}(\hat{x},ns)).$
More generally, the notation $(\hat{\cdot})$ will be used for vectors and operators defined on $\Omega.$ In this way, $(\imath u,\hat{\nabla}u) \in \mathbb{R}^2$ is the vector with components $(\imath u,\partial_j u)$ for $j=1,2.$ Finally, the notation $(\vec{\cdot})$ will be reserved for three dimensional vectors.

 The standard tool to study the vorticity in Ginzburg-Landau is the Jacobian. In our discretized problem, this object can be decomposed as a sum of $2$d Jacobians. This reflects the intermediate character of the layered problem where both $2$d and $3$d features can be observed. The starting point is the $\Gamma$-convergence result in \cite{Pe} which reduces the problem to a mean field version of it for the current and the induced potential. After this, we use convex duality to get a formulation in the spirit of \cite{BJOS2} for the isotropic Ginzburg-Landau functional. From this we derive a novel, more explicit, characterization of nontrivial vorticity which yields a new expression of the first critical field in the Lawrence-Doniach model. 

For the $2$d Ginzburg-Landau energy, the Jacobian is the main tool for analyzing the vorticity. In the context of layered superconductors, the current and Jacobian are discrete objects defined by
\begin{equation*}%\label{ja}
j^{\epsilon,s}(\{u_n\}_{n=0}^N)=\sum\limits_{n=0}^{N-1}j(u_n)\mathbb{1}_n(x_3), \quad J^{\epsilon,s}(\{u_n\}_{n=0}^N)=\sum\limits_{n=0}^{N-1}J(u_n)\mathbb{1}_n(x_3),
\end{equation*}
respectively, where $j(u_n):=(\imath u_n,\hat{\nabla}u_n)$ and $J(u_n):=\frac{1}{2}\cl j(u_n)$ are the $2$d current and Jacobian, respectively, and
\begin{equation*}%\label{chi}
\mathbb{1}_n(x_3) = 
\begin{cases}
\mathbb{1}_{(0,s)}(x_3) & \text{ for } n=0,\\
\mathbb{1}_{[ns,(n+1)s)}(x_3) & \text{ for } n=1,\dddot\ ,N-1.
\end{cases}
\end{equation*}
The natural domain of definition of these is $[H^1(\Omega;\mathbb{C})]^{N+1}.$

In \cite{Pe}, under the assumptions that $\lim_{\epsilon\rightarrow 0}\frac{h_{ex}}{|\ln\epsilon|}=h_0$ for some $0\leq h_0<\infty$ and
$s|\ln\epsilon|\rightarrow\infty$ as $(\epsilon,s)\rightarrow (0,0)$, it is proved that $\mathcal{G}_{LD}^{\epsilon,s}$ Gamma-converges to
\begin{equation*}
\ti{\mathcal{G}}_{h_0}(\hat v,\vec A):=\frac{1}{2}\left[\lVert \hat v-\hat A \rVert_{L^2(D)}^2 + |\text{curl}\hat v|(D) + \lVert \nabla\times\vec A - h_0\vec e_3\rVert_{L^2(\mathbb R^3)}^2\right]
\end{equation*}
for a pair $(\hat v,\vec A)\in V\times \ti E_0$, where
\begin{equation}\label{V}
V:=\{\hat v\in L^2(D;\mb R^2): \cl \hat v \in \mathcal{M}(D)\},
\end{equation}
and
\begin{equation*}
\ti E_0:=\{\vec{C}\in H^{1}_{loc}(\mathbb{R}^3;\mathbb{R}^3):(\nabla\times\vec{C})-h_{0}\vec{e}_{3}\in L^{2}(\mathbb{R}^3;\mathbb{R}^3)\}.
\end{equation*}
In particular, minimizers $(\{u^{\epsilon}_n\},\vec{A}^{\epsilon,s})$  of $\mathcal{G}_{LD}^{\epsilon,s}$ satisfy

\[ \frac{j^{\epsilon,s}}{|\ln\epsilon|}\rightharpoonup \hat v \text{ in } L^{\frac{4}{3}}(D;\mathbb{R}^2),\qd
\frac{\vec A^{\epsilon,s} - h_{ex}\vec a}{|\ln\epsilon|} \rightharpoonup \vec A-h_0\vec a \text{\quad in } \check H^1(\mathbb R^3;\mathbb R^3),\]
where $(\hat v, \vec A)$ is a minimizer of $\ti{\mathcal{G}}_{h_0}$, $\vec{a}=\vec{a}(x)$ is any fixed smooth vector field on $\mathbb{R}^3$ such that $a^3=0$, $\nabla\times\vec{a}=\vec{e}_3$ and $\nabla\cdot \vec{a}=0$ in
	$\mathbb{R}^3$, and $\check{H}^1(\mathbb{R}^3;\mb R^3)$ is the completion of $C_0^{\infty}(\mathbb{R}^3;\mathbb{R}^3)$ with respect to the norm
\begin{equation}\label{eq501}
\lVert\vec{C}\rVert_{\check{H}^1(\mathbb{R}^3;\mb R^3)}=(\int_{\mathbb{R}^3}|\nabla\vec{C}|^2dx)^{\frac{1}{2}}.
\end{equation}

The main purpose of this paper is to characterize the critical value for $h_0$ below which minimizers of the $\Gamma$-limit functional $\ti{\mathcal{G}}_{h_0}$ satisfy $\cl \hat v = 0$, indicating that the vorticity measure vanishes for minimizers. As a result, the value of $H_{c_1}$ for the Lawrence-Doniach energy functional is obtained up to an $o(|\ln \epsilon |)$ error. Since our problem corresponds to a uniform applied magnetic field, it is more convenient to rescale the limiting functional $\ti{\mathcal{G}}_{h_0}$ by a factor $1/h_0^2$. Namely, we introduce the rescaled energy functional
\begin{equation*}%\label{G0}
\begin{split}
&\mathcal{G}_{h_0}(\hat v,\vec{A}):= \frac{1}{h_0^2}\ti{\mathcal{G}}_{h_0}(h_0 \hat v, h_0 \vec{A}) \\
&\qd\qd= \frac{1}{2}\left[\lVert \hat v-\hat A \rVert_{L^2(D)}^2 + \frac{1}{h_0}|\cl\hat v|(D) + \lVert \nabla\times\vec A - \vec e_3\rVert_{L^2(\mathbb R^3)}^2\right]
\end{split}
\end{equation*}
and the corresponding admissible space for the magnetic potential
\begin{equation*}
E_0:=\{\vec{C}\in H^{1}_{loc}(\mathbb{R}^3;\mathbb{R}^3):(\nabla\times\vec{C})-\vec{e}_{3}\in L^{2}(\mathbb{R}^3;\mathbb{R}^3)\}.
\end{equation*}
It is clear that $(\hat v,\vec{A})\in V\times E_0$ minimizes $\mathcal{G}_{h_0}$ if and only if $(h_0 \hat v, h_0 \vec{A})\in V\times\ti E_0$ minimizes $\ti{\mathcal{G}}_{h_0}$. 
From \cite{BK}, each $\vec{C}\in\check{H}^1(\mathbb{R}^3;\mb R^3)$ has a representative in $L^6(\mathbb{R}^3;\mathbb{R}^3)$ such that
\begin{equation}\label{H11}
\lVert\vec{C}\rVert_{L^6(\mathbb{R}^3;\mathbb{R}^3)}\leq 2\lVert\vec{C}\rVert_{\check{H}^1(\mathbb{R}^3;\mb R^3)}.
\end{equation}
Further
\begin{equation}\label{H12}
\lVert\vec{C}\rVert_{\check{H}^1(\mathbb{R}^3;\mb R^3)}^2=\int_{\mathbb{R}^3}(|\nabla\cdot\vec{C}|^2+|\nabla\times\vec{C}|^2)dx.
\end{equation}
Define
\begin{equation*}%\label{K0}
K_0:=\{\vec{C}\in E_0:\nabla\cdot\vec{C}=0 \text{ and } \vec{C}-\vec{a}\in \check{H}^1(\mathbb{R}^3;\mb R^3)\cap L^6(\mathbb{R}^3;\mathbb{R}^3)\},
\end{equation*}
which is the space for the divergence free Coulomb gauge of the magnetic potential $\vec A$ for $\mathcal{G}_{h_0}$. Existence of minimizers of $\mathcal{G}_{h_0}$ in $V\times K_0$ is a trivial consequence of the corresponding existence result for $\ti{\mathcal{G}}_{h_0}$ proved in \cite{Pe}. The first goal is to reformulate the minimization problem in terms of an obstacle problem: this turns out to be more convenient to capture the intensity of the applied field that forces $\cl v$ to be a nontrivial measure. Our first theorem accomplishes this and gives a dual equivalence to being a minimizer of $\mathcal{G}_{h_0}$. 

\begin{theorem}\label{t1}
A pair $(\hat v_0,\vec A_0)\in V\times K_0$ minimizes $\g$ if and only if the following two conditions are satisfied:
\begin{enumerate}
\item The vector field $\vec B_0:=\nabla\times(\vec A_0-\vec a)$ belongs to 
%\begin{equation}
%\mathcal{C}_{h_0}:=\lt\{\begin{split}&\vec B\in H^1(\mb R^3;\mb R^3)\cap\Cl\h: \supp(\nabla\times\vec{B})\subset \overline{D},\\
%& \quad\quad\quad\qd\int_{\mb R^3}\vec B\cdot\lt(\nabla\times\vec{\phi}\rt)dx\leq\frac{1}{2h_0}\int_{D}|\cl\hat{\phi}|dx, \  \forall \phi\in H^1(\mb R^3;\mb R^3)\end{split}\rt\}.
%\end{equation}
\begin{equation*}
\mathcal{C}_{h_0}:=\lt\{\vec B\in H^1(\mb R^3;\mb R^3)\cap\Cl\h: \supp(\nabla\times\vec{B})\subset \overline{D},\, \lVert \vec B\rVert_{*}\leq \frac{1}{2h_0}\rt\},
\end{equation*}
where
\begin{equation}\label{eq48}
\lVert\vec B\rVert_{*}:=\sup\lt\{\int_{\R^3}\vec{B}\cdot\lt(\Cl\vec{\phi}\rt)dx: \vec{\phi}\in H^1(\R^3;\R^3),\int_{D}|\cl\hat{\phi}|dx\leq 1\rt\}.
\end{equation}
In addition, denoting $\vec v_0=(\hat v_0, 0)\in\R^3$, we have
\begin{equation}\label{t1.2}
\nabla\times\vec B_0+\lt(\vec A_0-\vec v_0\rt)\mathbb{1}_{D} = 0 \text{ in } \mb R^3.
\end{equation}
\item $\vec B_0$ is the unique minimizer in $\mathcal C_{h_0}$ of the functional
\begin{equation*}
\mathcal{E}_{0}(\vec B):=\frac{1}{2}\int_{\mb R^3}\lt|\vec B\rt|^2dx+\frac{1}{2}\int_{D}\lt|\nabla\times\vec B+\vec a\rt|^2dx.
\end{equation*}
\end{enumerate}
\end{theorem}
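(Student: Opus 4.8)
The proof is a convex-duality argument in the spirit of \cite{BJOS2}, adapted to the anisotropic structure of $\g$. The plan is to dualize the nonsmooth total-variation term $\frac{1}{2h_0}|\cl\hat v|(D)$, to eliminate $\hat v$ and the gauge potential by explicit quadratic minimizations, and to recognize the resulting problem as $\min_{\mathcal C_{h_0}}\mathcal E_0$; strong duality together with the saddle-point relations then yields both implications.

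\emph{Dualization and the minimax step.} First I would record the dual representation of the total variation,
\[
\frac{1}{2h_0}\,|\cl\hat v|(D)=\sup\Big\{-\int_D\zeta\, d(\cl\hat{v}):\ \zeta\in C_c^\infty(D),\ \|\zeta\|_{L^\infty(D)}\le\tfrac{1}{2h_0}\Big\},
\]
and change variables from $\vec A$ to $\vec B:=\Cl(\vec A-\vec a)=\Cl\vec A-\vec e_3$, using that in the Coulomb gauge $\vec A-\vec a$ is recovered uniquely and divergence free from $\vec B$, with $\lVert\Cl\vec A-\vec e_3\rVert_{L^2(\R^3)}^2=\lVert\vec B\rVert_{L^2(\R^3)}^2$ by \eqref{H12}. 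This turns the minimization of $\g$ into an inf--sup in $(\hat v,\vec A,\zeta)$ whose integrand is convex in $(\hat v,\vec A)$ and affine in $\zeta$. Since $\g$ is coercive on $V\times K_0$ with sublevel sets compact for weak and weak-$*$ convergence (of $\cl\hat v$ in $\mathcal M(D)$, of $\hat v$ in $L^2(D)$, and of $\vec A-\vec a$ in $\h\cap L^6(\R^3;\R^3)$), and the admissible set for $\zeta$ can be truncated to a weakly compact set (combining the $L^\infty$ bound with the $\int_D|\hat{\nabla}\zeta|^2$ contribution produced below), a minimax theorem (such as Sion's) lets me exchange the infimum and the supremum.

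\emph{Computing the dual problem.} With inf and sup interchanged, for fixed $\zeta$ the inner minimization is purely quadratic. Minimizing in $\hat v\in V$ is completing the square on $D$, which produces the term $-\tfrac12\int_D|\hat{\nabla}\zeta|^2$ and forces $\hat v-\hat A=-(\hat{\nabla}\zeta)^{\perp}$ on $D$; minimizing in $\vec A$, equivalently in $\vec B$ (which then ranges over all divergence-free $L^2$ fields on $\R^3$), is again quadratic and is solved by a Biot--Savart/Leray projection, the identity $\cl\hat a=1$ producing the linear term $-\int_D\zeta$. Collecting terms, the remaining supremum equals $\tfrac12\lVert\hat a\rVert_{L^2(D)}^2-\min\{\mathcal E_0(\vec B):\vec B\in\mathcal C_{h_0}\}$: the constraint $\lVert\vec B\rVert_{*}\le\frac{1}{2h_0}$ of \eqref{eq48} is precisely the Fenchel conjugate of $\frac{1}{2h_0}|\cl\hat v|(D)$ transported through these identifications (it amounts to an $L^\infty(D)$ bound by $\frac{1}{2h_0}$ on a stream function of the horizontal part of $\Cl\vec B$), the locality of $\hat v$ and $\hat A$ on $D$ forces $\supp(\Cl\vec B)\subset\overline D$ and, via finiteness of $\lVert\vec B\rVert_{*}$, the vanishing of the third component of $\Cl\vec B$, while $\vec B\in\Cl\h$ is automatic because $\vec B=\Cl(\vec A-\vec a)$ with $\vec A-\vec a\in\h\cap L^6(\R^3;\R^3)$. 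Strict convexity of $\mathcal E_0$ (from the $L^2(\R^3)$ term) together with convexity and weak closedness of $\mathcal C_{h_0}$ yields the uniqueness claimed in (2), and $\min_{V\times K_0}\g=\tfrac12\lVert\hat a\rVert_{L^2(D)}^2-\min_{\mathcal C_{h_0}}\mathcal E_0$.

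\emph{Optimality conditions, converse, and the main obstacle.} The saddle-point structure of the minimax identifies the optimal $\vec B_0$ with $\Cl(\vec A_0-\vec a)$ for any primal minimizer $(\hat v_0,\vec A_0)$ and yields the stationarity relation obtained by varying $\vec A$, which after one integration by parts in the Coulomb gauge is exactly $\Cl\vec B_0+(\vec A_0-\vec v_0)\mathbb{1}_D=0$ in $\R^3$, i.e. \eqref{t1.2}. From this equation $\Cl\vec B_0\in L^2(\R^3)$ with support in $\overline D$; combined with $\nabla\cdot\vec B_0=0$ and $\vec B_0\in L^2$, the identity \eqref{H12} upgrades $\vec B_0$ to $H^1(\R^3;\R^3)$, so $\vec B_0\in\mathcal C_{h_0}$ and (1) holds. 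For the reverse implication, given (1) and (2) I would run the duality inequality backwards: for any competitor $(\hat v,\vec A)\in V\times K_0$, pairing it against $\vec B_0$ and invoking $\lVert\vec B_0\rVert_{*}\le\frac{1}{2h_0}$, equation \eqref{t1.2}, and the $\mathcal E_0$-minimality of $\vec B_0$ in $\mathcal C_{h_0}$ gives $\g(\hat v,\vec A)\ge\g(\hat v_0,\vec A_0)$, so $(\hat v_0,\vec A_0)$ minimizes $\g$. I expect the crux of the argument to be the rigorous justification of the inf--sup exchange together with the function-space bookkeeping: the vorticity lives in the non-reflexive space $\mathcal M(D)$, the magnetic energy is posed on all of $\R^3$ and requires the completion space $\h$, the space $L^6(\R^3;\R^3)$ and the Coulomb gauge fixing, and one must verify that these partial minimizations and the dualization of the total variation genuinely reproduce $\mathcal E_0$ and exactly the constraint set $\mathcal C_{h_0}$ --- in particular that $\lVert\cdot\rVert_{*}$ of \eqref{eq48} is the correct conjugate functional. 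A secondary technical point is the elliptic-regularity step promoting $\vec B_0$ from $L^2$ to $H^1$ once \eqref{t1.2} is available.
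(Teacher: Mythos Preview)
Your overall strategy---convex duality applied to the nonsmooth term $\frac{1}{2h_0}|\cl\hat v|(D)$---matches the paper's, but your implementation differs and contains a real gap. The paper does not introduce a scalar Lagrange multiplier $\zeta$ and invoke a minimax theorem. Instead it works on the product Hilbert space $H=L^2(D;\R^2)\times\check H^1_{div}(\R^3;\R^3)$, writes $\g$ as $\tfrac12\|\cdot\|_H^2+\Phi$ with $\Phi((\hat\xi,\vec\zeta))=\tfrac{1}{2h_0}|\cl(\hat\xi+\hat\zeta+\hat a)|(D)$, and applies the abstract Fenchel identity (Lemma~\ref{l2}): minimizers of $\tfrac12\|\cdot\|_H^2+\Phi$ coincide with minimizers of $\tfrac12\|\cdot\|_H^2+\Phi^*(-\cdot)$. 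No compactness and no inf--sup interchange are needed; the work is the direct computation of $\Phi^*$, from which the constraint $\|\vec B\|_*\le\tfrac{1}{2h_0}$ and the Euler--Lagrange relation \eqref{t1.2} emerge simultaneously.

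The gap in your route is the justification of the inf--sup exchange. Sion's theorem requires compactness of one factor, and neither $\{\zeta\in C_c^\infty(D):\|\zeta\|_\infty\le\tfrac{1}{2h_0}\}$ nor $V\times K_0$ is compact in a topology for which the functional has the needed semicontinuity. Your proposed fix---truncating the $\zeta$-set using the $\int_D|\hat\nabla\zeta|^2$ term ``produced below''---is circular: that gradient control appears only \emph{after} minimizing in $\hat v$, i.e.\ after the very interchange you are trying to justify. A second, more minor, point: your description of $\|\cdot\|_*$ as ``an $L^\infty(D)$ bound on a stream function of the horizontal part of $\Cl\vec B$'' is not what \eqref{eq48} says---the test fields $\vec\phi$ are three-dimensional $H^1$ fields constrained only through $\int_D|\cl\hat\phi|$, so the condition is genuinely anisotropic and does not reduce to a pointwise bound without further work (indeed, that reduction is essentially the content of the later Theorem~\ref{t4}). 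Finally, the paper needs a nontrivial approximation lemma (Lemma~\ref{l7}) to pass from smooth test pairs to general $(\hat\phi,\vec\psi)\in V\times\check H^1_{div}$ when verifying the equivalence of the constraint conditions; your sketch would need a counterpart to this step as well.
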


The proof of Theorem \ref{t1} follows the analogous derivation in \cite{BJOS2} for the isotropic Ginzburg-Landau model. Here the $\lVert \cdot\rVert_{*}$ norm defined in (\ref{eq48}) differs from the one introduced in \cite{BJOS2} and can be viewed as an anisotropic analogue of the latter. We summarize some simple properties of minimizers of $\mathcal{G}_{h_0}$ which follow from Theorem \ref{t1}.

\begin{corollary}\label{c3}
Let $(\hat v_0,\vec A_0)\in V\times K_0$ be a minimizer of $\g$. Then we have $A_0^3=0$ and $(\Cl\vec{B}_0)^3=0$, where $(\Cl\vec{B}_0)^3=0$ denotes the $x_3$-component of $\Cl\vec{B}_0$. Moreover, if $\vec{w}_0\in L^2(\mb R^3;\mb R^3)$ is any vector field such that $\hat w_0|_{D}=(w_0^1|_D,w_0^2|_D)=\hat v_0$, then
\begin{equation}\label{t1.1}
\int_{\mb R^3}(\Cl\vec{B}_0)\cdot \vec{w}_0\,dx=-\frac{1}{2h_0}\lt|\cl \hat{w}_0\rt|(D).
\end{equation}
\end{corollary}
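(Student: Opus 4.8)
The plan is to deduce all three statements from Theorem \ref{t1}, and in particular from the Euler--Lagrange relation \eqref{t1.2}. I would begin with $A_0^3=0$: since $\vec A_0\in K_0$ is divergence free and $\nabla\cdot\vec a=0$, the field $\vec A_0-\vec a$ is divergence free, so the vector identity $\Cl\Cl=\nabla(\nabla\cdot)-\Delta$ gives $\Cl\vec B_0=-\Delta(\vec A_0-\vec a)$; inserting this into \eqref{t1.2} yields $\Delta(\vec A_0-\vec a)=(\vec A_0-\vec v_0)\mathbb{1}_D$ on $\R^3$. Taking third components and using $a^3=0$ and $v_0^3=0$ gives the scalar equation $\Delta A_0^3=A_0^3\mathbb{1}_D$. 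Since $\vec A_0-\vec a\in\h\cap L^6(\R^3;\R^3)$ and $a^3=0$, the component $A_0^3$ satisfies $\nabla A_0^3\in L^2(\R^3)$ and $A_0^3\in L^6(\R^3)$; I would then test this equation against $A_0^3$ itself (approximating it in the $\check H^1$-norm by $C_0^\infty$ functions) to obtain $\int_{\R^3}|\nabla A_0^3|^2\,dx=-\int_D (A_0^3)^2\,dx$, which forces $\nabla A_0^3\equiv0$ and hence, $A_0^3$ being in $L^6$, $A_0^3\equiv0$. Then \eqref{t1.2} immediately gives $(\Cl\vec B_0)^3=-(A_0^3-v_0^3)\mathbb{1}_D=0$, which is the second assertion.

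For the identity \eqref{t1.1}, note that $\vec B_0\in\mathcal C_{h_0}$ forces $\Cl\vec B_0\in L^2(\R^3;\R^3)$ with support in $\overline D$, so $\int_{\R^3}(\Cl\vec B_0)\cdot\vec w_0\,dx=\int_D(\Cl\vec B_0)\cdot\vec w_0\,dx$; on $D$, \eqref{t1.2} identifies $\Cl\vec B_0$ with $-(\vec A_0-\vec v_0)$, whose third component vanishes by the previous step. Using $\hat w_0|_D=\hat v_0$, this reduces the left-hand side of \eqref{t1.1} to $-\int_D(\hat A_0-\hat v_0)\cdot\hat v_0\,dx$, which is manifestly independent of the extension $\vec w_0$. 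It then remains to recognize $\int_D(\hat A_0-\hat v_0)\cdot\hat v_0\,dx=\frac{1}{2h_0}|\cl\hat v_0|(D)$, a first-variation identity: since $(\hat v_0,\vec A_0)$ minimizes $\g$, the field $\hat v_0$ minimizes $\hat v\mapsto\frac12\lVert\hat v-\hat A_0\rVert_{L^2(D)}^2+\frac{1}{2h_0}|\cl\hat v|(D)$ over $V$, so testing against the competitors $t\hat v_0$ with $t\ge0$ and using the $1$-homogeneity $|\cl(t\hat v_0)|(D)=t\,|\cl\hat v_0|(D)$, the quadratic polynomial $t\mapsto\frac12\lVert t\hat v_0-\hat A_0\rVert_{L^2(D)}^2+\frac{t}{2h_0}|\cl\hat v_0|(D)$ attains its minimum at $t=1$; setting its derivative at $t=1$ equal to zero is exactly the claimed identity. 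Finally $|\cl\hat w_0|(D)=|\cl\hat v_0|(D)$ since $\hat w_0\equiv\hat v_0$ on the open set $D$, which completes the argument.

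The above is largely routine; the one place requiring care is the integration by parts in the first step, which I would justify through the defining approximation of $\h$ by $C_0^\infty$ fields and the embedding \eqref{H11} --- the latter both ensuring $A_0^3\mathbb{1}_D\in L^2$ and ruling out a boundary contribution at infinity.
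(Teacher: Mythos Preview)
Your argument is correct. For the identity \eqref{t1.1} you proceed exactly as the paper does: both extract the first-variation relation $\int_D(\hat v_0-\hat A_0)\cdot\hat v_0\,dx+\frac{1}{2h_0}|\cl\hat v_0|(D)=0$ from minimality along the ray $t\mapsto t\hat v_0$ (the paper writes this as $t\mapsto e^t\hat v_0$) and then invoke \eqref{t1.2}.

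Where you genuinely diverge from the paper is in the treatment of $A_0^3$ and $(\Cl\vec B_0)^3$. The paper first establishes $(\Cl\vec B_0)^3=0$ by exploiting the constraint $\lVert\vec B_0\rVert_*\le\frac{1}{2h_0}$: testing with $\vec\psi=(0,0,\pm\psi^3)$ for $\psi^3\in C_c^\infty(\R^3)$ gives $\cl\hat\psi=0$, so the $\lVert\cdot\rVert_*$ bound forces $\int_D(\Cl\vec B_0)^3\psi^3\,dx=0$; this yields $-\Delta A_0^3=0$ globally, and then a Liouville/maximum-principle argument for harmonic functions in $L^6(\R^3)$ gives $A_0^3=0$. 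You instead reverse the order and bypass the $\lVert\cdot\rVert_*$ constraint entirely: from \eqref{t1.2} alone you read off $\Delta A_0^3=A_0^3\mathbb{1}_D$, test it against $A_0^3$ (legitimized by the $\check H^1$ approximation and \eqref{H11}) to obtain $\int_{\R^3}|\nabla A_0^3|^2=-\int_D(A_0^3)^2$, and conclude both vanish by sign. Then $(\Cl\vec B_0)^3=0$ drops out of \eqref{t1.2}. Your route is self-contained from the Euler--Lagrange equation and avoids the maximum principle; the paper's route makes direct use of the obstacle constraint $\vec B_0\in\mathcal C_{h_0}$, which is perhaps more in the spirit of the dual formulation but requires the additional Liouville step.
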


The next theorem gives a first characterization of the leading order of $H_{c_1}$, which is in the spirit of Theorem 3 in \cite{BJOS2}.

\begin{theorem}\label{t3}
Let $\hat v_0, \vec{A}_0, \vec{B}_0$ be as in Theorem \ref{t1}. Define the space $\mathcal{C}$ to be
\begin{equation*}%\label{eq21}
\mathcal{C}:=\lt\{\begin{split}&\vec B\in H^1(\mb R^3;\mb R^3)\cap\Cl\h: \int_{\R^3}\vec B\cdot(\nabla\times\vec{\phi})dx=0,\\ &\qd\qd\qd\qd\qd\qd\qd\qd\qd\qd\qd\forall \vec{\phi}\in H^1(\R^3;\R^3) \text{ s.t. } \cl\hat{\phi}=0 \text{ in } D\end{split}\rt\}.
\end{equation*}
Let $\vec B_*$ be the unique minimizer of $\mathcal{E}_{0}$ in $\mathcal{C}$. We have $\cl \hat v_0=0$ if and only if $\vec B_*=\vec B_0$ if and only if $\lVert\vec B_*\rVert_{*}\leq \frac{1}{2h_0}$.
\end{theorem}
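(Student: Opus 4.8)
The plan is to reduce the statement to a comparison between two constrained minimization problems for the quadratic functional $\mathcal{E}_0$: one over the convex set $\mathcal{C}_{h_0}$ (from Theorem \ref{t1}), and one over the linear subspace $\mathcal{C}$, which is exactly $\mathcal{C}_{h_0}$ with the norm constraint $\lVert\vec B\rVert_{*}\leq\frac{1}{2h_0}$ dropped and the support condition on $\nabla\times\vec B$ replaced by the orthogonality relations defining $\mathcal{C}$. First I would check that $\mathcal{C}$ is precisely the closed linear span obtained by relaxing the constraints in $\mathcal{C}_{h_0}$: the condition $\supp(\nabla\times\vec B)\subset\overline D$ is equivalent to $\int_{\R^3}\vec B\cdot(\nabla\times\vec\phi)\,dx=0$ for all $\vec\phi$ supported away from $\overline D$, and after an integration by parts this matches the stated requirement that the pairing vanish whenever $\cl\hat\phi=0$ in $D$ (using that $(\Cl\vec B)^3=0$, i.e. the tangential structure of the anisotropic problem, so only the $2$d curl of $\hat\phi$ inside $D$ sees $\vec B$). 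Thus $\mathcal{C}_{h_0}=\{\vec B\in\mathcal C:\lVert\vec B\rVert_{*}\leq\frac{1}{2h_0}\}$, and $\vec B_*$, the unconstrained-in-norm minimizer over $\mathcal C$, is a natural candidate for $\vec B_0$.

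Next I would establish the three equivalences cyclically. For "$\vec B_*=\vec B_0\Rightarrow\lVert\vec B_*\rVert_{*}\leq\frac{1}{2h_0}$": this is immediate since $\vec B_0\in\mathcal C_{h_0}$ by Theorem \ref{t1}(1). For "$\lVert\vec B_*\rVert_{*}\leq\frac{1}{2h_0}\Rightarrow\vec B_*=\vec B_0$": if $\vec B_*$ satisfies the norm bound then $\vec B_*\in\mathcal C_{h_0}$; since $\vec B_*$ minimizes $\mathcal{E}_0$ over the larger set $\mathcal C\supset\mathcal C_{h_0}$, it a fortiori minimizes over $\mathcal C_{h_0}$, and by the uniqueness assertion in Theorem \ref{t1}(2) it must equal $\vec B_0$. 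For "$\vec B_*=\vec B_0\Leftrightarrow\cl\hat v_0=0$": here I would use the Euler–Lagrange characterization. Since $\vec B_*$ minimizes the quadratic $\mathcal{E}_0$ over the \emph{linear} space $\mathcal C$, it satisfies $\int_{\R^3}\vec B_*\cdot\vec B\,dx+\int_D(\nabla\times\vec B_*+\vec a)\cdot(\nabla\times\vec B)\,dx=0$ for all $\vec B\in\mathcal C$, i.e. $\nabla\times(\nabla\times\vec B_*+\vec a)+\vec B_*\,\mathbb{1}_D$ is orthogonal to $\mathcal C$, which by the definition of $\mathcal C$ means it lies in the span of curls of fields $\vec\phi$ with $\cl\hat\phi=0$ in $D$. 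Comparing with \eqref{t1.2}, which reads $\nabla\times\vec B_0+(\vec A_0-\vec v_0)\mathbb{1}_D=0$ and where $\nabla\times\vec B_0=-(\vec A_0-\vec a)$ up to a gradient-free term, one sees that $\vec B_0$ already satisfies this same stationarity relation precisely because $\vec B_0$ is stationary for $\mathcal{E}_0$ over $\mathcal C_{h_0}$ with the $\lVert\cdot\rVert_*$ constraint either inactive (interior minimum) or active; the constraint is inactive exactly when the Lagrange multiplier vanishes, and by the duality in Theorem \ref{t1} together with Corollary \ref{c3}, equation \eqref{t1.1}, that multiplier is a positive multiple of $|\cl\hat v_0|(D)$. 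Hence $\vec B_0=\vec B_*$ iff the constraint is inactive iff $|\cl\hat v_0|(D)=0$.

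The main obstacle I anticipate is the precise bookkeeping in this last step: translating "the $\lVert\cdot\rVert_*$ constraint is inactive at the minimizer" into "$\cl\hat v_0=0$" requires carefully unwinding the duality pairing. One must show that the slackness in $\lVert\vec B_0\rVert_*\leq\frac{1}{2h_0}$ is governed by the total variation $|\cl\hat v_0|(D)$, which is exactly the content of \eqref{t1.1}: the optimal test field $\vec\phi$ realizing the supremum in \eqref{eq48} has $\cl\hat\phi$ equal (as a measure) to a normalized version of $\cl\hat v_0$, so $\int_{\R^3}(\Cl\vec B_0)\cdot\vec w_0\,dx=-\frac{1}{2h_0}|\cl\hat w_0|(D)$ forces $\lVert\vec B_0\rVert_*=\frac{1}{2h_0}$ whenever $\cl\hat v_0\neq0$, and conversely if $\cl\hat v_0=0$ then $\vec B_0$ minimizes $\mathcal{E}_0$ over all of $\mathcal C$ without the norm restriction binding, so $\vec B_0=\vec B_*$. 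A secondary technical point is verifying that $\mathcal C$ is closed in $H^1\cap\Cl\h$ so that $\vec B_*$ exists and is unique — this follows from the strict convexity of $\mathcal{E}_0$ (using $\lVert\vec B\rVert_{\check H^1}^2=\int(|\nabla\cdot\vec B|^2+|\nabla\times\vec B|^2)$ from \eqref{H12} and coercivity of $\mathcal{E}_0$ on the relevant space) together with weak closedness of the linear constraints.
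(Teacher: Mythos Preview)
Your high-level structure is sound and matches the paper's: the inclusion $\mathcal{C}_{h_0}=\{\vec B\in\mathcal C:\lVert\vec B\rVert_*\le\tfrac{1}{2h_0}\}$ (which indeed follows from the definition of $\lVert\cdot\rVert_*$ together with Lemma~\ref{l4}) makes the two ``easy'' implications $\vec B_*=\vec B_0\Rightarrow\lVert\vec B_*\rVert_*\le\tfrac{1}{2h_0}$ and $\lVert\vec B_*\rVert_*\le\tfrac{1}{2h_0}\Rightarrow\vec B_*=\vec B_0$ immediate. The paper leaves these implicit as well.

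The gap is in your treatment of the central equivalence $\vec B_0=\vec B_*\Leftrightarrow\cl\hat v_0=0$. Two concrete problems:

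\emph{First}, the Lagrange-multiplier/complementary-slackness heuristic does not close. You argue that \eqref{t1.1} forces $\lVert\vec B_0\rVert_*=\tfrac{1}{2h_0}$ whenever $\cl\hat v_0\neq 0$, and hence that ``the constraint is active''. But lying on the boundary of the constraint set does \emph{not} preclude $\vec B_0$ from being the unconstrained minimizer over $\mathcal C$; a strictly convex functional can certainly attain its minimum on $\mathcal C$ at a point where $\lVert\cdot\rVert_*$ happens to equal $\tfrac{1}{2h_0}$. So ``$\lVert\vec B_0\rVert_*=\tfrac{1}{2h_0}$'' does not by itself yield $\vec B_0\neq\vec B_*$. (There is also a regularity issue: the competitors in \eqref{eq48} are $H^1$ fields, whereas $\hat v_0$ is only in $V\subset L^2$, so \eqref{t1.1} does not directly saturate the supremum defining $\lVert\vec B_0\rVert_*$.) Moreover, the norm $\lVert\cdot\rVert_*$ is not differentiable, so the ``multiplier is a positive multiple of $|\cl\hat v_0|(D)$'' statement would need a subdifferential formulation you do not supply.

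\emph{Second}, the relation ``$\nabla\times\vec B_0=-(\vec A_0-\vec a)$ up to a gradient-free term'' is false; one has $\vec B_0=\Cl(\vec A_0-\vec a)$, hence $\Cl\vec B_0=-\Delta(\vec A_0-\vec a)$, which is not $-(\vec A_0-\vec a)$.

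What is actually needed---and what the paper does---is to compute the first variation of $\mathcal E_0$ at $\vec B_0$ directly and test against all of $\mathcal C$. Using \eqref{t1.2} to write $\Cl\vec B_0=(\vec v_0-\vec A_0)\mathbb{1}_D$ and integrating $\vec B_0=\Cl(\vec A_0-\vec a)$ by parts, one finds that for every $\vec B\in\mathcal C$,
\[
\int_{\R^3}\vec B_0\cdot\vec B\,dx+\int_D(\Cl\vec B_0+\vec a)\cdot(\Cl\vec B)\,dx=\int_D\vec v_0\cdot(\Cl\vec B)\,dx.
\]
The left side is the Euler--Lagrange expression \eqref{t31}, so $\vec B_0$ is stationary for $\mathcal E_0$ on $\mathcal C$ (hence equals $\vec B_*$) if and only if the right side vanishes for every $\vec B\in\mathcal C$. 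The key missing ingredient is then Lemma~\ref{l4}: it shows that $\{\,(\Cl\vec B)|_D:\vec B\in\mathcal C\,\}=N^\perp$ where $N=\{\vec C\in L^2(D;\R^3):\cl\hat C=0\}$. Hence the right side vanishes for all $\vec B\in\mathcal C$ iff $\vec v_0|_D\in (N^\perp)^\perp=N$, i.e.\ iff $\cl\hat v_0=0$. This replaces your multiplier argument by a direct orthogonality statement. The paper presents the same computation, somewhat less compactly, by separately rewriting $\mathcal E_0(\vec B_0)$ and $\mathcal E_0(\vec B_*)$ via \eqref{t34}--\eqref{t37} and matching the cross terms; for the converse direction it reads off $-\Delta B_*^3+B_*^3+1=0$ from \eqref{t31} and compares with $\cl\hat v_0=-\Delta B_0^3+B_0^3+1$ from \eqref{t1.2}.
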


The characterizations in Theorems \ref{t1} and \ref{t3} rely on minimizing the energy $\mathcal{E}_0$ subject to the constraint imposed by the $\lVert\cdot\rVert_{*}$ norm. In the $3$d setting, this is a non-local norm as opposed to the $L^{\infty}$ norm in $2$d, and is difficult to characterize in general. However, the highly anisotropic feature in our problem allows us to give a more explicit equivalent condition for $\cl \hat v_0=0$. 

\begin{theorem}\label{t4}
	Let $\vec B_*$ be the unique minimizer of $\mathcal{E}_{0}$ in $\mathcal{C}$. For all $x_3\in(0,L)$, let $\psi_{x_3}$ be the solution of the following problem
	\begin{equation}\label{t41}
	\begin{cases}
	-\hat\Delta\psi_{x_3}+(B_*^3(\cdot,x_3)+1)=0 &\text{in }\Omega,\\
	\psi_{x_3}=0 &\text{on } \partial\Omega,
	\end{cases}
	\end{equation}
	where $\hat\Delta$ is the two-dimensional Laplacian. Then $\cl \hat v_0=0$ if and only if $\lVert\psi_{x_3}\rVert_{\infty}\leq\frac{1}{2h_0}$ for all $x_3\in(0,L)$.
\end{theorem}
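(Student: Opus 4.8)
The plan is to reduce everything, via Theorem~\ref{t3}, to the identity
\[
\lVert\vec B_*\rVert_{*}=\sup_{x_3\in(0,L)}\lVert\psi_{x_3}\rVert_{L^\infty(\Omega)}.
\]
Granting this, the claimed equivalence is immediate: $\cl\hat v_0=0$ iff $\lVert\vec B_*\rVert_{*}\le\frac1{2h_0}$ (Theorem~\ref{t3}) iff $\lVert\psi_{x_3}\rVert_{L^\infty(\Omega)}\le\frac1{2h_0}$ for every $x_3\in(0,L)$. I would prove the identity in three steps: a structural description of $\Cl\vec B_*$; an explicit evaluation of the linear functional appearing in \eqref{eq48} on $\vec B_*$; and the identification of the resulting density with $\psi_{x_3}$ via the Euler--Lagrange equation defining $\vec B_*$.

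\emph{Structure of $\Cl\vec B_*$.} Since $\vec B_*\in\mathcal C$, testing the relation $\int_{\R^3}\vec B_*\cdot(\Cl\vec\phi)=0$ in turn against $\vec\phi$ supported in $\R^3\setminus\overline D$, against $\vec\phi=(0,0,\phi^3)$, and against $\vec\phi=(\phi^1,\phi^2,0)$ with $\cl\hat\phi=0$ in $D$ (all admissible in the definition of $\mathcal C$), and integrating by parts, one obtains: $\supp(\Cl\vec B_*)\subset\overline D$; $(\Cl\vec B_*)^3=0$ on $\R^3$; and the horizontal part $\widehat{\Cl\vec B_*}:=((\Cl\vec B_*)^1,(\Cl\vec B_*)^2)$ is two-dimensionally divergence free in $D$ with vanishing normal component on $\partial\Omega\times(0,L)$. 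As $\Omega$ is simply connected, the last item yields, for a.e.\ $x_3\in(0,L)$, a stream function $\Psi(\cdot,x_3)\in H^1_0(\Omega)$ with $\widehat{\Cl\vec B_*}=\hat\nabla^{\perp}\Psi$ in $\Omega$, where $\hat\nabla^{\perp}:=(-\partial_2,\partial_1)$; equivalently $\Cl\vec B_*=(\hat\nabla^{\perp}\Psi,0)\mathbb 1_D$.

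\emph{Evaluation of $\lVert\vec B_*\rVert_{*}$ and identification of $\Psi$.} Because $\vec B_*\in\mathcal C$, the value $\int_{\R^3}\vec B_*\cdot(\Cl\vec\phi)$ depends on $\vec\phi$ only through $f:=\cl\hat\phi|_D$ (two test fields with the same $\cl\hat\phi|_D$ differ by an admissible one). Integrating by parts first in $\R^3$ (using $\supp\Cl\vec B_*\subset\overline D$ and $(\Cl\vec B_*)^3=0$) and then slicewise in $\Omega$ (using $\Psi|_{\partial\Omega}=0$) gives $\int_{\R^3}\vec B_*\cdot(\Cl\vec\phi)=-\int_D\Psi f\,dx$. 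Since $\{\cl\hat\phi|_D:\vec\phi\in H^1(\R^3;\R^3)\}$ contains, after mollification, normalized indicators of small balls compactly contained in $D$, it realizes the $L^1$--$L^\infty$ duality on $D$, so $\lVert\vec B_*\rVert_{*}=\lVert\Psi\rVert_{L^\infty(D)}$. To identify $\Psi$, I use that $\vec B_*$ minimizes the quadratic functional $\mathcal E_0$ over the linear subspace $\mathcal C$, so $\int_{\R^3}\vec B_*\cdot\vec C+\int_D(\Cl\vec B_*+\vec a)\cdot(\Cl\vec C)\,dx=0$ for all $\vec C\in\mathcal C$. For $\Theta\in C_0^\infty(D)$ the field $(\hat\nabla^{\perp}\Theta,0)\mathbb 1_D$ is divergence free and compactly supported in $D$, hence equals $\Cl\vec C$ for some $\vec C\in H^1(\R^3;\R^3)\cap\Cl\h$, and such a $\vec C$ lies in $\mathcal C$ by the first step. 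Writing $\vec B_*=\Cl\vec W$ with $\vec W\in\h$ and integrating by parts (the identity $\int(\Cl\vec W)\cdot\vec C=\int\vec W\cdot(\Cl\vec C)$ being licensed by the compact support of $\Cl\vec C$; using $(\Cl\vec W)^3=B_*^3$, $(\Cl\vec a)^3=1$, and $\Theta|_{\partial\Omega}=0$) turns the two terms into $-\int_D B_*^3\Theta\,dx$ and $-\int_D(\hat\Delta\Psi+1)\Theta\,dx$. Varying $\Theta$ gives $-\hat\Delta\Psi=B_*^3+1$ in $D$, i.e.\ slicewise $-\hat\Delta\Psi(\cdot,x_3)=B_*^3(\cdot,x_3)+1$ in $\Omega$ with $\Psi(\cdot,x_3)\in H^1_0(\Omega)$; comparing with \eqref{t41}, $\Psi(\cdot,x_3)=-\psi_{x_3}$. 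Hence $\lVert\vec B_*\rVert_{*}=\lVert\Psi\rVert_{L^\infty(D)}$, which equals the essential supremum over $x_3\in(0,L)$ of $\lVert\psi_{x_3}\rVert_{L^\infty(\Omega)}$, and this is a genuine supremum since elliptic regularity makes $x_3\mapsto\psi_{x_3}$ continuous into $C(\overline\Omega)$. This is what was needed.

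\emph{Main difficulty.} The crux is the structural analysis of $\Cl\vec B_*$ in the first step together with the careful derivation of the Euler--Lagrange equation, compounded by the function-space bookkeeping that legitimizes every integration by parts in the $\check H^1$--$L^6$ framework (this is precisely what dictates using test fields whose relevant curls are compactly supported). Secondary technical points, to be handled in passing, are: the slicewise well-definedness and measurability in $x_3$ of $\Psi(\cdot,x_3)$ and $B_*^3(\cdot,x_3)$; the claim that $\{\cl\hat\phi|_D\}$ is rich enough to realize the $L^1$--$L^\infty$ duality on $D$; and the use of elliptic regularity to pass from an essential supremum over slices to a genuine supremum over all $x_3\in(0,L)$. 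Once these are in place, the reduction through Theorem~\ref{t3} is routine.
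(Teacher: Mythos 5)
Your proposal is correct, and it takes a genuinely different route from the paper's proof. The paper never evaluates $\lVert\vec B_*\rVert_*$ directly: it first shows (Lemma \ref{l013}) that $\cl\hat v_0=0$ iff $\cl\hat v_*=0$, where $\hat v_*$ minimizes the reduced functional $F(\cdot;\vec A_*)$; it then slices the total variation (Lemma \ref{l13}), runs the $2$d convex duality slice by slice so that each slice becomes the double obstacle problem (\ref{eq54}) with constraint $\lVert\psi\rVert_\infty\leq\frac{1}{2h_0}$, and reads off the criterion from obstacle-problem theory — which is why it needs the $W^{2,p}$ regularity of the double obstacle problem (the appendix) and a substantial amount of measurability/continuity-in-$x_3$ bookkeeping. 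You instead prove the identity $\lVert\vec B_*\rVert_*=\sup_{x_3}\lVert\psi_{x_3}\rVert_{L^\infty(\Omega)}$ and then simply quote Theorem \ref{t3}; your structural step ($\supp(\Cl\vec B_*)\subset\overline D$, $(\Cl\vec B_*)^3=0$, slicewise stream function $\Psi(\cdot,x_3)\in H^1_0(\Omega)$) is essentially Lemmas \ref{l4} and \ref{l11} specialized to $\vec B_*$, and your Euler--Lagrange computation with test curls $(\hat\nabla^{\perp}\Theta,0)\mathbb{1}_D$ recovers, slicewise, the equation (\ref{eq50}), so that $\Psi(\cdot,x_3)=-\psi_{x_3}$. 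What each approach buys: yours is shorter, bypasses the auxiliary minimizer $\hat v_*$, the BV slicing lemma and the double-obstacle regularity theory entirely, and reduces the continuity-in-$x_3$ issue to the linear Dirichlet problem; the paper's route, at the price of the heavier machinery, yields additional structural information about the minimizing current itself (that $\hat v_0$ slices into minimizers of the $2$d problems $F_{x_3}$, with $\hat v_{x_3}-\hat A_*(\cdot,x_3)=\hat\nabla^{\perp}\psi_{x_3}$ solving an obstacle problem), which is of independent interest. Two small caveats in your write-up: the assertion that the field $\vec C$ with $\Cl\vec C=(\hat\nabla^{\perp}\Theta,0)\mathbb{1}_D$ ``lies in $\mathcal C$ by the first step'' is not literally so — membership in $\mathcal C$ requires the (easy, since $\Cl\vec C$ is smooth and compactly supported in $D$) direct verification against all $\vec\phi\in H^1(\R^3;\R^3)$ with $\cl\hat\phi=0$ in $D$, i.e. the converse half of Lemma \ref{l4}; and the slicewise disintegration (passing from the $3$d distributional conditions to a.e.-$x_3$ statements, and the measurability of $\Psi$) genuinely needs the product-test-function/countable-density argument you allude to, a technical load comparable to the paper's own. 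Neither point affects the soundness of the plan.
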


As a consequence of Theorem \ref{t4}, denoting $\xi:=\sup_{x_3\in\lt(0,L\rt)}\lVert\psi_{x_3}\rVert_{\infty}$, where $\psi_{x_3}$ is the solution of problem (\ref{t41}), we obtain the leading order expansion $H_{c_1}=\lt(\frac{1}{2\xi}+o(1)\rt)|\ln\ep|$ for the first critical field of the Lawrence-Doniach energy in the highly anisotropic regime $s|\ln\epsilon|\rightarrow\infty$.
%Combining Theorems \ref{t3} and \ref{t4}, we have an explicit characterization of the $\lVert\cdot\rVert_*$ norm, namely, $\lVert\vec B_*\rVert_* = \sup_{x_3\in\lt(0,L\rt)}\lVert\psi_{x_3}\rVert_{\infty}$, where $\psi_{x_3}$ is the solution of the problem (\ref{t41}). 
To the best of our knowledge, this is the first time such an expression has been obtained for the full Lawrence-Doniach model with no simplifying assumptions of periodicity. Let us note that in the $3$d setting, explicit asymptotics for the value of the first critical field in terms of intrinsic geometric quantities are very hard to derive. For the isotropic model, the analogous expansions are available in the literature \cite{BJOS2, roman} in great generality but they depend on $\lVert\vec B_*\rVert_*$ and no further insight into this quantity is provided. Our characterization in Theorem \ref{t4} partially reduces the non-local norm to the $L^{\infty}$ norm of the functions $\psi_{x_3}$, although the functions $\psi_{x_3}$ still depend on $\vec{B}_{*}$ in a non-local way. Nevertheless, the vector field $\vec B_{*}$ is the minimizer of the energy functional $\mathcal{E}_0$ in the \emph{unconstrained} space $\mathcal{C}$. We expect that for certain domains with special symmetries, it is possible to write out the explicit expressions for $\vec B_{*}$. This is known to be true for spherical domains (see \cite{ABM}). If for certain cylindrical domains one can write out the explicit expression for $\vec B_{*}$, then the functions $\psi_{x_3}$ can be solved explicitly using the appropriate Green's function, and thus the leading order of $H_{c_1}$ can be made explicit for our problem. Our characterization is therefore a more complete description of $H_{c_1}$ in our setting.

Our paper is organized as follows. In the next section we gather some preliminary results that are needed for the subsequent characterizations of the first critical filed. In section \ref{section3} we use convex duality to derive the non-local obstacle problem for the measure $\cl v$ and the first properties of its corresponding minimizers. Later, in section \ref{section4} we prove Theorem \ref{t3}. Finally, in section \ref{section5} we obtain the more explicit characterization of triviality of the vorticity measure thus concluding the proof of Theorem \ref{t4}. An appendix is included at the end with the proof of a technical result about the regularity of double-sided obstacle problems that appear in our study.\nl

\noindent \textbf{Acknowledgments.} The first author was supported by a grant from the Simons Foundation \# 426318. The second author is very grateful to Wenhui Shi and Rohit Jain for helpful discussions on obstacle problems.

%%%%%%%%%%%%%%%%%%%%%%%%%%%%%%%%%%%%%%%%%%%%%%%%%%%%%%%%%%%%%%%%%%%%%%%%%%%%
%
%     preliminaries
%
%%%%%%%%%%%%%%%%%%%%%%%%%%%%%%%%%%%%%%%%%%%%%%%%%%%%%%%%%%%%%%%%%%%%%%%%%%%%

\section{Preliminaries}\label{s2}

In this section we gather some elementary results that will be needed later. We recall that $(\vec{\cdot})$ and $(\hat{\cdot})$ are reserved for three and two dimensional vectors respectively.  Additionally, if $\hat{w}$ is a two-dimensional vector, then $\vec{w}\in\R^3$ denotes $(\hat{w},0)=(w^1,w^2,0)$, and for $\vec w\in \R^3$, we denote by $\hat w=(w^1,w^2)\in\R^2$.

\begin{proposition}\label{p1}
	The minimizer of $\mathcal{E}_0$ is attained in the sets $\mathcal{C}_{h_0}$ and $\mathcal{C}$.
\end{proposition}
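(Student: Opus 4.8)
The plan is to apply the direct method of the calculus of variations in the Hilbert space $H^1(\R^3;\R^3)$. Since $\mathcal{E}_0\ge 0$ and $\vec 0$ belongs to both $\mathcal{C}_{h_0}$ and $\mathcal{C}$ (the support, norm and orthogonality constraints are all trivially met by $\vec 0$), in either case $\inf\mathcal{E}_0$ is a finite nonnegative number; moreover $\mathcal{E}_0(\vec B)<\infty$ for every admissible $\vec B$, because $\vec B\in H^1\subset L^2(\R^3)$ and because, $D$ being bounded, $\vec a\in L^2(D)$ even though $\vec a\notin L^2(\R^3)$. Fix a minimizing sequence $(\vec B_k)$ in $\mathcal{C}_{h_0}$ (resp.\ in $\mathcal{C}$).

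The first step is a uniform $H^1(\R^3)$ bound on $(\vec B_k)$. From $\mathcal{E}_0(\vec B_k)\le\inf\mathcal{E}_0+1$ one reads off $\|\vec B_k\|_{L^2(\R^3)}\le C$ and, via $\|\Cl\vec B_k\|_{L^2(D)}\le\|\Cl\vec B_k+\vec a\|_{L^2(D)}+\|\vec a\|_{L^2(D)}$, also $\|\Cl\vec B_k\|_{L^2(D)}\le C$. Next, $\nabla\cdot\vec B_k=0$ since $\vec B_k\in\Cl\h$, and $\Cl\vec B_k=0$ a.e.\ on $\R^3\setminus\overline D$: for $\mathcal{C}_{h_0}$ this is the support constraint, while for $\mathcal{C}$ it follows by testing the orthogonality condition against $\vec\phi\in C_c^\infty(\R^3\setminus\overline D;\R^3)$ --- such $\vec\phi$ satisfy $\cl\hat\phi=0$ in $D$ (indeed $\hat\phi\equiv 0$ there), and $\int_{\R^3}\vec B_k\cdot(\Cl\vec\phi)\,dx=\int_{\R^3}(\Cl\vec B_k)\cdot\vec\phi\,dx=0$ then forces $\Cl\vec B_k$ to vanish outside $\overline D$. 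Hence $\|\Cl\vec B_k\|_{L^2(\R^3)}=\|\Cl\vec B_k\|_{L^2(D)}\le C$, and the identity (\ref{H12}) (which applies since $H^1(\R^3;\R^3)\subset\h$) gives $\|\nabla\vec B_k\|_{L^2(\R^3)}^2=\|\nabla\cdot\vec B_k\|_{L^2(\R^3)}^2+\|\Cl\vec B_k\|_{L^2(\R^3)}^2\le C$. So $(\vec B_k)$ is bounded in $H^1(\R^3;\R^3)$, and along a subsequence $\vec B_k\rightharpoonup\vec B_\infty$ weakly in $H^1(\R^3;\R^3)$; in particular $\vec B_k\rightharpoonup\vec B_\infty$ and $\Cl\vec B_k\rightharpoonup\Cl\vec B_\infty$ weakly in $L^2(\R^3)$.

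Next I would verify that $\vec B_\infty$ belongs to the constraint set and that $\mathcal{E}_0$ is weakly sequentially lower semicontinuous. Each $\vec B_k$ can be written as $\vec B_k=\Cl\vec C_k$ with $\vec C_k\in\h$ in the Coulomb gauge $\nabla\cdot\vec C_k=0$, so $\|\vec C_k\|_{\h}=\|\vec B_k\|_{L^2(\R^3)}\le C$ by (\ref{H12}); extracting a further subsequence with $\vec C_k\rightharpoonup\vec C_\infty$ weakly in the Hilbert space $\h$ and passing to the limit in $\Cl\vec C_k=\vec B_k$ shows $\vec B_\infty=\Cl\vec C_\infty\in\Cl\h$. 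The support constraint passes to the weak limit since $\int_{\R^3}(\Cl\vec B_k)\cdot\vec\psi\,dx\to\int_{\R^3}(\Cl\vec B_\infty)\cdot\vec\psi\,dx$ for every $\vec\psi\in C_c^\infty(\R^3\setminus\overline D)$; similarly, each linear functional $\vec B\mapsto\int_{\R^3}\vec B\cdot(\Cl\vec\phi)\,dx$ is weakly continuous on $H^1(\R^3;\R^3)$ because $\Cl\vec\phi\in L^2(\R^3)$, so the orthogonality conditions defining $\mathcal{C}$ pass to the limit and, $\lVert\cdot\rVert_{*}$ being a supremum of such functionals over admissible $\vec\phi$, it is weakly lower semicontinuous, giving $\lVert\vec B_\infty\rVert_{*}\le\liminf_k\lVert\vec B_k\rVert_{*}\le\frac{1}{2h_0}$. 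Hence $\vec B_\infty\in\mathcal{C}_{h_0}$ (resp.\ $\mathcal{C}$). Finally $\mathcal{E}_0$, being the sum of $\vec B\mapsto\frac12\|\vec B\|_{L^2(\R^3)}^2$ and $\vec B\mapsto\frac12\|\Cl\vec B+\vec a\|_{L^2(D)}^2$ --- both convex and continuous for the relevant weak $L^2$ topologies --- is weakly sequentially lower semicontinuous along $(\vec B_k)$, so $\mathcal{E}_0(\vec B_\infty)\le\liminf_k\mathcal{E}_0(\vec B_k)=\inf\mathcal{E}_0$, and $\vec B_\infty$ is the desired minimizer.

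The step I expect to be the main obstacle is the uniform $H^1(\R^3)$ bound on the minimizing sequence: $\mathcal{E}_0$ by itself controls only $\vec B$ in $L^2(\R^3)$ and $\Cl\vec B$ in $L^2(D)$, so to regain control of the full gradient of $\vec B$ on all of $\R^3$ one must combine the divergence-free condition (inherited from $\vec B$ being a curl) with the fact that $\Cl\vec B$ is supported in $\overline D$, and then invoke the Hodge-type identity (\ref{H12}); one also has to keep in mind that $\vec a$ is only locally square integrable, which is why it is essential that it enters $\mathcal{E}_0$ only through an integral over the bounded set $D$. Once this coercivity is in hand, nonemptiness of the constraint sets, their weak sequential closedness, and the lower semicontinuity of $\mathcal{E}_0$ are all routine.
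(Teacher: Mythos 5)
Your proof is correct and follows essentially the same route as the paper: the direct method, with coercivity obtained exactly as in the paper from $\nabla\cdot\vec B_k=0$, $\supp(\Cl\vec B_k)\subset\overline D$ and the identity (\ref{H12}), plus weak compactness in $H^1(\R^3;\R^3)$ and in $\h$ for the potentials in Coulomb gauge. The only (harmless) differences are that you pass the constraints to the limit using weak $L^2$ convergence and the weak lower semicontinuity of $\lVert\cdot\rVert_*$ as a supremum of weakly continuous linear functionals, where the paper invokes the compact Sobolev embedding and strong $L^2_{loc}$ convergence, and that you spell out why $\Cl\vec B=0$ outside $\overline D$ for elements of $\mathcal{C}$, a point the paper leaves to ``almost identical arguments.''
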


\begin{proof}
	We first show the existence of minimizer of $\mathcal{E}_0$ in the set $\mathcal{C}_{h_0}$. Let $\{\vec{B}_j\}_j\subset \mathcal{C}_{h_0}$ be a minimizing sequence of $\mathcal{E}_0$. Assume $\vec{B}_j=\Cl\vec\xi_j$ for $\vec\xi_j\in \h$. Without loss of generality, we may assume that $\nabla\cdot\vec{\xi}_j=0$ (see Lemma 3.1 in \cite{GP}), and hence, by (\ref{H12}) we have
	\begin{equation}\label{eq102}
	\|\vec\xi_j\|_{\h} = \|\vec{B}_j\|_{L^2(\R^3;\R^3)}.
	\end{equation}
	Since $\nabla\cdot\vec{B}_j=0$ and $\supp(\Cl\vec{B}_j)\subset \overline{D}$, it follows that
	\begin{equation}\label{eq101}
	\| \vec{B}_j\|_{H^1(\R^3;\R^3)}^2 = \int_{\R^3}\lt|\vec{B}_j\rt|^2 dx + \int_{D} \lt|\Cl\vec{B}_j\rt|^2 dx.
	\end{equation}
	Since $\{\vec{B}_j\}\subset \mathcal{C}_{h_0}$ is a minimizing sequence of $\mathcal{E}_0$, we deduce from (\ref{eq101}) that $\{\vec{B}_j\}$ forms a bounded sequence in $H^1(\R^3;\R^3)$, and hence has a weakly convergent subsequence which converges to some $\vec{B}_0\in H^1(\R^3;\R^3)$. On the other hand, using (\ref{eq102}), $\{\vec{\xi}_j\}$ forms a bounded sequence in $\h$, and hence, up to a subsequence, converges weakly in $\h$ to some $\vec\xi_0$. It is clear that $\Cl\vec{\xi}_0=\vec{B}_0$, and therefore $\vec{B}_0\in H^1(\R^3;\R^3)\cap \Cl\h$. By the Sobolev embedding theorem, we have $\vec{B}_j\rightarrow \vec{B}_0$ in $L^2_{loc}(\R^3;\R^3)$. Since $\supp(\Cl\vec{B}_j)\subset \overline{D}$, it follows that $\supp(\Cl\vec{B}_0)\subset \overline{D}$. Further, the $\|\cdot\|_{*}$ norm is preserved under strong $L^2$ convergence. Hence, $\vec B_0\in\mathcal{C}_{h_0}$. It follows from lower semicontinuity that $\vec B_0$ is the minimizer of $\mathcal{E}_0$ in $\mathcal{C}_{h_{0}}$. The existence of minimizer in $\mathcal{C}$ follows almost identical arguments.
\end{proof}

We will need the following convex duality result repeatedly, whose proof can be found, for example, in \cite{ekte}, Chapter IV.
\begin{lemma}\label{l2}
	Let $\Phi$ be convex lower semi-continuous from a Hilbert space $H$ to $(-\infty,\infty]$, and let $\Phi^*$ denote its conjugate, i.e.,
	\begin{equation}\label{eqp1}
	\Phi^*(f)=\sup_{g\in H}\lt(\langle f,g \rangle_{H}-\Phi(g)\rt),
	\end{equation}
	then
	\begin{equation*}
	\min_{u\in H}\lt(\frac{1}{2}\lVert u\rVert_H^2+\Phi(u)\rt)=-\min_{v\in H}\lt(\frac{1}{2}\lVert v\rVert_H^2+\Phi^*(-v)\rt)
	\end{equation*}
	and minimizers coincide.
\end{lemma}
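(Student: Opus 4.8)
\textbf{Proof proposal for Lemma~\ref{l2}.}

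The plan is to derive the identity from the classical Fenchel--Rockafellar duality theorem, specialized to the case where the ``first'' functional is the quadratic $\frac12\lVert\cdot\rVert_H^2$. Set $\Psi(u):=\frac12\lVert u\rVert_H^2$, which is convex, continuous, and finite everywhere on $H$. We want to compute
\[
\inf_{u\in H}\bigl(\Psi(u)+\Phi(u)\bigr).
\]
The Fenchel duality theorem (see \cite{ekte}, Chapter~IV; the qualification hypothesis holds because $\Psi$ is continuous on all of $H$, hence continuous at some point of $\mathrm{dom}\,\Phi$ provided $\Phi\not\equiv+\infty$; the degenerate case $\Phi\equiv+\infty$ is trivial since both sides are $+\infty$) gives
\[
\inf_{u\in H}\bigl(\Psi(u)+\Phi(u)\bigr)=-\min_{v\in H}\bigl(\Psi^*(v)+\Phi^*(-v)\bigr),
\]
where the minimum on the right is attained. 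So the first key step is simply to quote this theorem and verify its hypotheses.

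The second step is the elementary computation $\Psi^*=\Psi$: since $H$ is a Hilbert space, $\sup_{g\in H}\bigl(\langle v,g\rangle_H-\frac12\lVert g\rVert_H^2\bigr)$ is attained at $g=v$ and equals $\frac12\lVert v\rVert_H^2$. Substituting this into the dual problem yields exactly
\[
\min_{u\in H}\Bigl(\tfrac12\lVert u\rVert_H^2+\Phi(u)\Bigr)=-\min_{v\in H}\Bigl(\tfrac12\lVert v\rVert_H^2+\Phi^*(-v)\Bigr),
\]
noting also that the infimum on the left is attained: the objective $\frac12\lVert\cdot\rVert_H^2+\Phi$ is convex, l.s.c., and coercive (the quadratic term dominates since $\Phi$ is bounded below by an affine function, being convex and l.s.c. and not identically $+\infty$), so a minimizer exists by the direct method. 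This establishes the stated equality of values.

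For the claim that ``minimizers coincide,'' the plan is to use the extremality (Kuhn--Tucker) relations that accompany Fenchel duality: if $\bar u$ solves the primal and $\bar v$ solves the dual, then $\Psi(\bar u)+\Psi^*(\bar v)=\langle\bar v,\bar u\rangle_H$ and $\Phi(\bar u)+\Phi^*(-\bar v)=\langle-\bar v,\bar u\rangle_H$. The first relation, combined with $\Psi=\Psi^*=\frac12\lVert\cdot\rVert_H^2$, forces equality in the Cauchy--Schwarz/Young inequality $\frac12\lVert\bar u\rVert^2+\frac12\lVert\bar v\rVert^2\ge\langle\bar v,\bar u\rangle$, hence $\bar v=\bar u$. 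Thus the primal minimizer and the dual minimizer are the same element of $H$, which is the precise sense in which ``minimizers coincide.'' (Conversely one checks that if $\bar u$ is the primal minimizer then $\bar v:=\bar u$ satisfies the dual extremality relations and hence minimizes the dual, so the correspondence is genuinely an identification of the two minimizers.)

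I do not anticipate a serious obstacle here: the result is a textbook consequence of convex duality, and the only points requiring care are (i) checking the qualification condition for Fenchel--Rockafellar — immediate since the quadratic term is everywhere continuous — and (ii) handling the degenerate case $\Phi\equiv+\infty$ separately, where the statement holds trivially. The mild subtlety worth stating explicitly is why the left-hand infimum is attained (coercivity from the quadratic term plus the affine lower bound on $\Phi$), so that one may legitimately write $\min$ rather than $\inf$ on both sides as in the statement. Since the lemma is invoked only as a citation-backed tool, it would also be entirely acceptable to simply reference \cite{ekte} for the full argument; the sketch above records the mechanism for the reader's convenience.
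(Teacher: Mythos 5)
Your proposal is correct and matches the paper's treatment: the paper gives no independent argument and simply invokes the classical Fenchel--Rockafellar duality from \cite{ekte}, Chapter IV, which is exactly the machinery you use (with $\Psi=\frac12\lVert\cdot\rVert_H^2$, $\Psi^*=\Psi$, the everywhere-continuity of $\Psi$ as the qualification condition, and the extremality relations plus equality in Young's inequality to identify the primal and dual minimizers). Your added details on attainment and the degenerate case $\Phi\equiv+\infty$ are sound and consistent with the cited source.
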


Next we recall the following technical lemma from \cite{GP}.

\begin{lemma}[Lemma 3.1, \cite{GP}]\label{l8}
Let $\vec{g}\in L^2(\R^3;\R^3)$ be such that $\nabla\cdot\vec{g}=0$ in $\mathcal{D}'(\R^3)$. Then there is a unique $\vec{u}\in \check{H}^1(\R^3;\R^3)\cap L^6(\R^3;\R^3)$ such that $\nabla\times \vec{u}=\vec g$ and $\nabla\cdot\vec{u} = 0$.
\end{lemma}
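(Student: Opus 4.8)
The natural plan is to build $\vec{u}$ explicitly as a Biot--Savart-type Fourier multiplier applied to $\vec{g}$ and to read off every required property from the symbol. Since $\vec{g}\in L^2(\R^3;\R^3)$, its Fourier transform $\mathcal{F}\vec{g}$ lies in $L^2$, and the hypothesis $\nabla\cdot\vec{g}=0$ in $\mathcal{D}'(\R^3)$ says exactly that $\xi\cdot(\mathcal{F}\vec{g})(\xi)=0$ for a.e.\ $\xi$. I would then define $\vec{u}$ by
\begin{equation*}
(\mathcal{F}\vec{u})(\xi):=\frac{\imath\,\xi\times(\mathcal{F}\vec{g})(\xi)}{|\xi|^2}.
\end{equation*}
Using the identity $\xi\times(\xi\times\vec{w})=(\xi\cdot\vec{w})\,\xi-|\xi|^2\vec{w}$ together with $\xi\cdot(\mathcal{F}\vec{g})=0$, one computes $\imath\xi\times(\mathcal{F}\vec{u})=\mathcal{F}\vec{g}$ and $\imath\xi\cdot(\mathcal{F}\vec{u})=0$, which are precisely the identities $\nabla\times\vec{u}=\vec{g}$ and $\nabla\cdot\vec{u}=0$; reality of $\vec{u}$ follows from the conjugation symmetry of $\mathcal{F}\vec{g}$.

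For membership in $\h\cap L^6(\R^3;\R^3)$, observe that $|\xi|\,|(\mathcal{F}\vec{u})(\xi)|\le|(\mathcal{F}\vec{g})(\xi)|$, so by Plancherel $\nabla\vec{u}\in L^2(\R^3;\R^3)$ with $\|\nabla\vec{u}\|_{L^2}\le\|\vec{g}\|_{L^2}$; once $\nabla\cdot\vec{u}=0$ is known, (\ref{H12}) in fact gives the identity $\|\vec{u}\|_{\h}^2=\|\vec{g}\|_{L^2}^2$. To see that $\vec{u}$ is a genuine tempered distribution (not merely an object defined modulo polynomials), I would split $\mathcal{F}\vec{u}$ according to $\{|\xi|\le1\}$ and $\{|\xi|>1\}$: the low-frequency part is integrable since $\int_{|\xi|\le1}\frac{|(\mathcal{F}\vec{g})(\xi)|}{|\xi|}\,d\xi\le\|\vec{g}\|_{L^2}\bigl(\int_{|\xi|\le1}|\xi|^{-2}\,d\xi\bigr)^{1/2}<\infty$ in dimension three, while the high-frequency part is in $L^2$; hence $\mathcal{F}\vec{u}\in L^1(\{|\xi|\le1\})+L^2(\R^3)\subset\mathcal{S}'(\R^3)$ and $\vec{u}$ is well defined. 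A standard frequency-truncation and mollification argument then shows $\vec{u}$ is approximable in the $\h$-norm by fields in $C_0^\infty(\R^3;\R^3)$, so $\vec{u}\in\h$, and by (\ref{H11}) it has an $L^6$ representative with $\|\vec{u}\|_{L^6}\le2\|\vec{u}\|_{\h}$.

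For uniqueness I would argue by a Liouville-type statement. If $\vec{u}_1,\vec{u}_2\in\h\cap L^6(\R^3;\R^3)$ both satisfy $\nabla\times\vec{u}_i=\vec{g}$ and $\nabla\cdot\vec{u}_i=0$, then $\vec{z}:=\vec{u}_1-\vec{u}_2$ satisfies $\nabla\times\vec{z}=0$ and $\nabla\cdot\vec{z}=0$, hence componentwise $-\Delta\vec{z}=\nabla\times(\nabla\times\vec{z})-\nabla(\nabla\cdot\vec{z})=0$, so each component of $\vec{z}$ is harmonic on all of $\R^3$. On the Fourier side $|\xi|^2\,\mathcal{F}\vec{z}=0$ forces $\mathcal{F}\vec{z}$ to be supported at the origin, so $\vec{z}$ is a polynomial; since $\vec{z}\in L^6(\R^3;\R^3)$ this polynomial vanishes identically (equivalently, a harmonic function in $L^6(\R^3)$ is zero by the mean value property and Hölder's inequality). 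Thus $\vec{u}_1=\vec{u}_2$, which gives uniqueness.

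The one point that needs genuine care is the low-frequency behavior of the multiplier $|\xi|^{-2}$: one must confirm the local integrability of $\mathcal{F}\vec{u}$ near $\xi=0$ so that $\vec{u}$ is an honest element of the concrete realization of $\h$ inside $L^6(\R^3;\R^3)$ presupposed by (\ref{H11}), rather than merely a class modulo polynomials; everything else is routine symbol calculus plus the harmonic/Fourier-support argument for uniqueness. As an alternative to the multiplier construction one could instead set $\vec{u}:=\nabla\times\vec{w}$ with $\vec{w}$ the componentwise Newtonian potential of $\vec{g}$, use Calderón--Zygmund estimates to get $\nabla^2\vec{w}\in L^2$, and note that $\nabla\cdot\vec{w}$ is then a harmonic function decaying at infinity and hence vanishes; but the Fourier computation is the most economical route.
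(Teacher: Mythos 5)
Your argument is correct, but note that the paper does not prove this statement at all: it imports it verbatim as Lemma 3.1 of \cite{GP} (with only a remark reconciling the two definitions of $\check{H}^1$), so there is no internal proof to compare against, and what you have written is a legitimate self-contained substitute. Your route — the Biot--Savart multiplier $\mathcal{F}\vec{u}=\imath\,\xi\times\mathcal{F}\vec{g}/|\xi|^2$, with $\xi\cdot\mathcal{F}\vec{g}=0$ giving $\nabla\times\vec{u}=\vec{g}$, $\nabla\cdot\vec{u}=0$ and the exact identity $\lVert\vec{u}\rVert_{\check{H}^1}=\lVert\vec{g}\rVert_{L^2}$ via (\ref{H12}), plus the Liouville argument for uniqueness in $L^6$ — is sound, and you correctly isolate the only delicate point, namely the low-frequency integrability that makes $\vec{u}$ a genuine tempered distribution with an $L^6$ representative rather than a class modulo polynomials; the frequency truncation/cutoff/mollification step you invoke is exactly the kind of argument the paper itself carries out in Lemma \ref{l011}, so it fits the surrounding toolkit. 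By contrast, the construction the paper actually uses elsewhere (in the proof of Lemma \ref{l4}) is the Newtonian-potential variant $\vec{u}=\nabla\times\bigl(\Gamma_3\ast\vec{g}\bigr)$, which is only deployed there for compactly supported data, where Calder\'on--Zygmund theory applies directly; your Fourier construction has the advantage of handling arbitrary divergence-free $\vec{g}\in L^2(\R^3;\R^3)$ without worrying about convergence of the convolution, at the cost of the extra bookkeeping about $\mathcal{S}'$ and the concrete realization of the completion $\check{H}^1$ inside $L^6$, which you handle adequately.
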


Note that in the original statement of Lemma 3.1 in \cite{GP}, it is stated that $\vec{u}$ is unique in $\check{H}^1(\R^3;\R^3)$ instead of $\check{H}^1(\R^3;\R^3)\cap L^6(\R^3;\R^3)$. This is due to the slightly different definition of the space $\check{H}^1(\R^3;\R^3)$. Specifically, the space $\check{H}^1(\R^3;\R^3)$ defined in \cite{GP} corresponds to $\check{H}^1(\R^3;\R^3)\cap L^6(\R^3;\R^3)$ in this paper. Using the above lemma, we show

\begin{lemma}\label{l012}
	Given any $\vec{B}\in\mathcal{C}$, there exists a unique $\vec{A}\in K_0$ satisfying $\Cl\lt(\vec{A}-\vec{a}\rt)=\vec{B}$.
\end{lemma}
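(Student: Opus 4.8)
\textbf{Proof proposal for Lemma~\ref{l012}.}
The plan is to build $\vec A$ by first extending $\vec B$ to a genuine curl on all of $\R^3$ and then inverting the curl operator via Lemma~\ref{l8}. Given $\vec B\in\mathcal C$, I would begin by recording that $\vec B\in H^1(\R^3;\R^3)\cap\Cl\h$, so in particular $\vec B\in L^2(\R^3;\R^3)$ with $\nabla\cdot\vec B=0$ (the latter because $\vec B$ is itself a curl, hence divergence free in the distributional sense, as already used in the proof of Proposition~\ref{p1}). Now set $\vec g:=\vec B+\vec e_3$. Then $\nabla\cdot\vec g=0$ in $\mathcal D'(\R^3)$, so Lemma~\ref{l8} applies and yields a unique $\vec u\in\check H^1(\R^3;\R^3)\cap L^6(\R^3;\R^3)$ with $\nabla\times\vec u=\vec g$ and $\nabla\cdot\vec u=0$. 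Define $\vec A:=\vec u+\vec a$.

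Next I would check that $\vec A\in K_0$ and that it has the required property. By construction $\Cl(\vec A-\vec a)=\nabla\times\vec u=\vec g=\vec B+\vec e_3$; but wait — we want $\Cl(\vec A-\vec a)=\vec B$, not $\vec B+\vec e_3$. The resolution is to absorb the $\vec e_3$ into $\vec a$ itself: since $\nabla\times\vec a=\vec e_3$, one instead sets $\vec g:=\vec B$ directly (which is already divergence free), applies Lemma~\ref{l8} to get $\vec u\in\check H^1\cap L^6$ with $\nabla\times\vec u=\vec B$ and $\nabla\cdot\vec u=0$, and defines $\vec A:=\vec u+\vec a$. Then $\Cl(\vec A-\vec a)=\vec B$ as desired. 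Moreover $\nabla\cdot\vec A=\nabla\cdot\vec u+\nabla\cdot\vec a=0$ since $\vec a$ is chosen divergence free; $\vec A-\vec a=\vec u\in\check H^1(\R^3;\R^3)\cap L^6(\R^3;\R^3)$; and $(\nabla\times\vec A)-\vec e_3=\nabla\times\vec u+\nabla\times\vec a-\vec e_3=\vec B\in L^2(\R^3;\R^3)$, so $\vec A\in E_0$. Hence $\vec A\in K_0$.

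For uniqueness, suppose $\vec A_1,\vec A_2\in K_0$ both satisfy $\Cl(\vec A_i-\vec a)=\vec B$. Then $\vec u_i:=\vec A_i-\vec a\in\check H^1(\R^3;\R^3)\cap L^6(\R^3;\R^3)$ (by definition of $K_0$), both have $\nabla\times\vec u_i=\vec B$, and both are divergence free (since $\nabla\cdot\vec A_i=0=\nabla\cdot\vec a$). The uniqueness clause of Lemma~\ref{l8} then forces $\vec u_1=\vec u_2$, i.e. $\vec A_1=\vec A_2$. I do not anticipate a serious obstacle here; the only point requiring care is the bookkeeping around the term $\vec e_3$ versus $\nabla\times\vec a$ and confirming that each defining condition of $K_0$ (divergence-free, Coulomb gauge, the $\check H^1\cap L^6$ membership of $\vec A-\vec a$, and $(\nabla\times\vec A)-\vec e_3\in L^2$) is verified — all of which follow immediately once the correct choice $\vec g=\vec B$ is made and Lemma~\ref{l8} is invoked.
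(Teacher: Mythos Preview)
Your proposal is correct and, after the self-correction to $\vec g=\vec B$, follows essentially the same route as the paper: observe $\vec B$ is divergence free because it is a curl, apply Lemma~\ref{l8} to obtain $\vec u\in\check H^1\cap L^6$ with $\nabla\times\vec u=\vec B$ and $\nabla\cdot\vec u=0$, set $\vec A=\vec u+\vec a$, verify the defining conditions of $K_0$, and deduce uniqueness from the uniqueness clause of Lemma~\ref{l8}. (As a side remark, the initial attempt $\vec g=\vec B+\vec e_3$ would also have failed for a second reason: $\vec e_3\notin L^2(\R^3)$, so Lemma~\ref{l8} would not apply.)
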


\begin{proof}
	Given $\vec{B}\in\mathcal{C}$, as $\vec{B}\in \nabla\times\check{H}^1(\R^3;\R^3)$, we have $\nabla\cdot\vec{B}=0$ in $\R^3$. By Lemma \ref{l8}, there exists $\vec{u}\in \check{H}^1(\R^3;\R^3)\cap L^6(\R^3;\R^3)$ such that $\nabla\times\vec{u}=\vec{B}$ and $\nabla\cdot\vec{u}=0$. Letting $\vec A=\vec u+\vec a$, we have $\vec A - \vec a\in \check{H}^1(\R^3;\R^3)\cap L^6(\R^3;\R^3)$ and $\nabla\times(\vec A - \vec a) = \vec B$, $\nabla\cdot(\vec A - \vec a)=0$. It follows that $\vec A \in K_0$. 
	
	Let $\vec A_1\in K_0$ be such that $\nabla\times(\vec A_1-\vec a) = \vec B$. Denoting $\vec u_1=\vec A_1-\vec a$, it follows from the definition of $K_0$ that $\vec u_1\in \check{H}^1(\R^3;\R^3)\cap L^6(\R^3;\R^3)$ and $\nabla\times \vec u_1=\vec B$, $\nabla\cdot\vec u_1=0$.  We conclude from Lemma \ref{l8} that $\vec u_1=\vec u$ and thus $\vec A_1=\vec A$. This shows the uniqueness of $\vec A$.
\end{proof}

%%%%%%%%%%%%%%%%%%%%%%%%%%%%%%%%%%%%%%%%%%%%%%%%%%%%%%%%%%%%%%%%%%%%%%%%%%%%%%%%
%
%    characterization of minimizers
%
%%%%%%%%%%%%%%%%%%%%%%%%%%%%%%%%%%%%%%%%%%%%%%%%%%%%%%%%%%%%%%%%%%%%%%%%%%%%%%%

\section{Characterization of minimizers of $\mathcal{G}_{h_0}$: proof of Theorem \ref{t1} and Corollary \ref{c3}}\label{section3}

We start with the proof of Theorem \ref{t1}, which relies on the convex duality result stated in Lemma \ref{l2} and follows closely the calculations in the proof of Theorem 2 in \cite{BJOS2}. Here some subtle modifications are needed to account for the highly anisotropic features in our problem. We define the space $\check{H}^1_{div}(\mb R^3;\mb R^3)$ to be
\begin{equation}\label{eq510}
\check{H}^1_{div}(\mb R^3;\mb R^3):=\{\vec C\in\check{H}^1(\mb R^3;\mb R^3):\nabla\cdot\vec C=0 \}.
\end{equation}
This is a Hilbert space with the inner product 
\begin{equation}\label{eq511}
\lt(\vec\phi,\vec{\psi}\rt)_{\check{H}^1_{div}(\mb R^3;\mb R^3)}:= \int_{\R^3}\lt(\Cl\vec{\phi}\rt)\cdot\lt(\Cl\vec{\psi}\rt)\, dx.
\end{equation}
Since $\check{H}^1_{div}(\R^3;\R^3)$ is a closed subspace of $\check{H}^1(\R^3;\R^3)$, we have the decomposition $\check{H}^1= \check{H}^1_{div}\oplus(\check{H}^1_{div})^{\perp}$. We need a simple characterization of $(\check{H}^1_{div})^{\perp}$. First we note the following fact whose proof is standard. We include the proof for completeness. 

\begin{lemma}\label{l011}
The space $C^{\infty}_c(\R^3;\R^3)$ is dense in $\check{H}^1(\R^3;\R^3)$ with respect to the norm defined in (\ref{eq501}).
\end{lemma}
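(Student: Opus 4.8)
The plan is to prove the density by the classical truncate-then-mollify scheme, the only substantive point being that the decay built into the space (the $L^6$ bound \eqref{H11}) is what makes the truncation error vanish. Since $\h$ is by construction the completion of $C_c^\infty(\R^3;\R^3)$ under $\lVert\cdot\rVert_{\check H^1}$, and by \eqref{H11} every element is represented by some $\vec C\in L^6(\R^3;\R^3)$ with $\nabla\vec C\in L^2(\R^3;\R^3)$, it suffices to show: for each such concrete $\vec C$ there exist $\vec C_k\in C_c^\infty(\R^3;\R^3)$ with $\lVert\nabla(\vec C_k-\vec C)\rVert_{L^2(\R^3)}\to 0$ (convergence in $L^6$ then follows automatically from the Sobolev/\eqref{H11} inequality applied to $\vec C_k-\vec C$, consistently with the definition).

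First I would truncate. Fix $\chi\in C_c^\infty(\R^3)$ with $\chi\equiv 1$ on $B_1$, $\supp\chi\subset B_2$, $0\le\chi\le 1$, and set $\chi_R(x):=\chi(x/R)$, $\vec C_R:=\chi_R\vec C$. Then $\nabla\vec C_R=\chi_R\nabla\vec C+\vec C\otimes\nabla\chi_R$; the first term converges to $\nabla\vec C$ in $L^2(\R^3)$ by dominated convergence. The second term is supported in the annulus $A_R:=B_{2R}\setminus B_R$, where $|\nabla\chi_R|\le C/R$, so by Hölder's inequality
\[
\lVert\vec C\otimes\nabla\chi_R\rVert_{L^2(\R^3)}\le \frac{C}{R}\,\lVert\vec C\rVert_{L^2(A_R)}\le \frac{C}{R}\,|A_R|^{1/3}\,\lVert\vec C\rVert_{L^6(A_R)}\le C\,\lVert\vec C\rVert_{L^6(A_R)},
\]
since $|A_R|\le CR^3$; and this tends to $0$ as $R\to\infty$ because $\vec C\in L^6(\R^3)$. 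Hence $\nabla\vec C_R\to\nabla\vec C$ in $L^2(\R^3)$, while each $\vec C_R$ lies in $H^1(\R^3;\R^3)$ with compact support.

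Next I would mollify: with $\rho_\delta$ a standard mollifier, $\vec C_{R,\delta}:=\vec C_R*\rho_\delta\in C_c^\infty(\R^3;\R^3)$ and $\nabla\vec C_{R,\delta}=(\nabla\vec C_R)*\rho_\delta\to\nabla\vec C_R$ in $L^2(\R^3)$ as $\delta\to 0$. A diagonal choice $\vec C_k:=\vec C_{R_k,\delta_k}$ with $R_k\to\infty$ and $\delta_k\downarrow 0$ chosen fast enough then yields $\nabla\vec C_k\to\nabla\vec C$ in $L^2(\R^3)$, which proves the claim. The only real obstacle is the truncation step just above: the error is not controlled by $\lVert\vec C\rVert_{L^2(A_R)}$ (which need not be small) but by $\lVert\vec C\rVert_{L^6(A_R)}$, and the precise balance $R^{-1}|A_R|^{1/3}\le C$ is exactly what holds in dimension three with exponent six — so membership in $L^6$, i.e. \eqref{H11}, is indispensable (a nonzero constant field, which has $\nabla\vec C=0$ but is not in $L^6$, is correctly excluded from $\h$).
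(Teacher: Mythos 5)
Your proof is correct and rests on the same key mechanism as the paper's: a cutoff at scale $R$ whose error term is controlled via H\"older's inequality by the $L^6$ norm of the field on the support of $\nabla\chi_R$ (your bound $R^{-1}|A_R|^{1/3}\leq C$ is exactly the paper's scaling identity $\lVert\nabla\eta_R\rVert_{L^3}=\lVert\nabla\eta\rVert_{L^3}$), with the $L^6$ bound \eqref{H11} supplying the vanishing of the tail. The only cosmetic difference is that the paper first invokes the definition of $\check H^1$ as a completion to reduce to truncating already-smooth elements, so it needs no mollification step, whereas you truncate the $L^6$ representative directly and then mollify; both are fine.
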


\begin{proof}
By definition, the space $C^{\infty}_0(\R^3;\R^3)\cap \check{H}^1(\R^3;\R^3)$ is dense in $\check{H}^1(\R^3;\R^3)$. Therefore we only need to show that $C^{\infty}_c(\R^3;\R^3)$ is dense in $C^{\infty}_0(\R^3;\R^3)\cap \check{H}^1(\R^3;\R^3)$. Let $\eta(x)\in C^{\infty}_c(\R^3)$ be a standard cutoff function such that $\eta(x)=1$ near the origin. For any $\vec\psi\in C^{\infty}_0(\R^3;\R^3)\cap \check{H}^1(\R^3;\R^3)$ and $R>0$, define $\vec{\psi}_R(x):=\vec{\psi}(x)\eta_R(x)$, where $\eta_R(x)=\eta(\frac{x}{R})$. It is clear that $\vec{\psi}_R \in C^{\infty}_c(\R^3;\R^3)$. Further, we have
\begin{equation*}
\int_{\R^3}\lt|\nabla\vec{\psi}-\nabla\vec{\psi}_R\rt|^2\,dx \leq 2\int_{\R^3}\lt|\nabla\vec{\psi}\rt|^2\lt|1-\eta_R\rt|^2\,dx + 2\int_{\R^3}\lt|\vec{\psi}\rt|^2\lt|\nabla\eta_R\rt|^2\,dx.
\end{equation*}
Since $\nabla\vec{\psi}\in L^2(\R^3)$, it is clear that the above first term on the right hand side tends to zero as $R\rightarrow \infty$. For the second term on the right hand side, it follows from H\"{o}lder's inequality that
\begin{equation}\label{eq505}
2\int_{\R^3}\lt|\vec{\psi}\rt|^2\lt|\nabla\eta_R\rt|^2\,dx\leq 2\lt(\int_{\supp(\nabla\eta_R)}|\vec{\psi}|^6\,dx\rt)^{\frac{1}{3}} \lt(\int_{\supp(\nabla\eta_R)}\lt|\nabla\eta_R\rt|^3\,dx\rt)^{\frac{2}{3}}.
\end{equation}
It follows from (\ref{H11}) that 
\begin{equation}
\int_{\supp(\nabla\eta_R)}|\vec{\psi}|^6\,dx \rightarrow 0 \text{ as } R\rightarrow\infty.
\end{equation}
On the other hand, setting $y=\frac{x}{R}$, we have
\begin{equation}\label{eq506}
\int_{\supp(\nabla\eta_R)}\lt|\nabla\eta_R\rt|^3\,dx = \int_{\R^3}\lt|\nabla_x\lt(\eta\lt(\frac{x}{R}\rt)\rt)\rt|^3\,dx= \int_{\R^3}\lt|\nabla_y\lt(\eta\lt(y\rt)\rt)\rt|^3\,dy= \lVert \nabla\eta\rVert_{L^3(\R^3)}^3.
\end{equation}
Putting (\ref{eq505})-(\ref{eq506}) together, we obtain
\begin{equation*}
2\int_{\R^3}\lt|\vec{\psi}\rt|^2\lt|\nabla\eta_R\rt|^2\,dx \rightarrow 0 \text{ as } R\rightarrow\infty.
\end{equation*}
It follows that $\lVert \nabla\vec{\psi}-\nabla\vec{\psi}_R\rVert_{L^2(\R^3)}\rightarrow 0$ as $R\rightarrow \infty$ and hence the set $C^{\infty}_c(\R^3;\R^3)$ is dense in $C^{\infty}_0(\R^3;\R^3)\cap \check{H}^1(\R^3;\R^3)$.
\end{proof}

\begin{lemma}\label{l010}
For any $\vec\psi\in(\check{H}^1_{div})^{\perp}$, we have $\Cl\vec{\psi}=0$.
\end{lemma}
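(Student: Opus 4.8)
The plan is to show that the orthogonal complement $(\check{H}^1_{div})^{\perp}$ consists entirely of gradients of $\dot H^2$-type functions, and that such gradients have vanishing curl. Concretely, suppose $\vec\psi\in(\check{H}^1_{div})^{\perp}$, so that by definition of the inner product \eqref{eq511} we have $\int_{\R^3}(\Cl\vec\psi)\cdot(\Cl\vec\phi)\,dx=0$ for every $\vec\phi\in\check{H}^1_{div}(\R^3;\R^3)$. By Lemma \ref{l011}, it suffices to test against $\vec\phi\in C^{\infty}_c(\R^3;\R^3)$ with $\nabla\cdot\vec\phi=0$; for any such $\vec\phi$ one gets $\int_{\R^3}(\Cl\vec\psi)\cdot(\Cl\vec\phi)\,dx=0$. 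Since $\Cl\vec\phi$ ranges over a dense subset of the divergence-free, curl-of-something vector fields, the first step is to translate this orthogonality statement into a PDE for $\vec\psi$.

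Next I would integrate by parts: for $\vec\phi\in C^\infty_c(\R^3;\R^3)$ with $\nabla\cdot\vec\phi=0$, we have $\int_{\R^3}(\Cl\vec\psi)\cdot(\Cl\vec\phi)\,dx=\int_{\R^3}(\nabla\times\nabla\times\vec\psi)\cdot\vec\phi\,dx=\int_{\R^3}\bigl(\nabla(\nabla\cdot\vec\psi)-\Delta\vec\psi\bigr)\cdot\vec\phi\,dx$ in the distributional sense. Testing against all divergence-free $\vec\phi\in C^\infty_c$ shows that $\nabla(\nabla\cdot\vec\psi)-\Delta\vec\psi$ is, as a distribution, a gradient, say $\nabla q$. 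Hence $\Delta\vec\psi=\nabla(\nabla\cdot\vec\psi-q)=:\nabla p$ for some distribution $p$. Applying the divergence to both sides gives $\Delta(\nabla\cdot\vec\psi)=\Delta p$, i.e. $p-\nabla\cdot\vec\psi$ is harmonic; combined with the decay coming from $\vec\psi\in\check H^1\subset L^6$ (via \eqref{H11}) and $\nabla\vec\psi\in L^2$, a Liouville-type argument forces $\Delta\vec\psi$ to be the gradient of a harmonic function, which I can then argue vanishes. An alternative and perhaps cleaner route: take the curl of $\Delta\vec\psi=\nabla p$ to get $\Delta(\Cl\vec\psi)=0$, so each component of $\Cl\vec\psi$ is harmonic; but $\Cl\vec\psi\in L^2(\R^3)$ by \eqref{H12}, and a harmonic $L^2$ function on $\R^3$ is identically zero. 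This immediately gives $\Cl\vec\psi=0$.

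I would therefore organize the proof around the second route, which is shorter. The steps in order: (i) reduce, via Lemma \ref{l011}, the orthogonality condition to testing against compactly supported smooth divergence-free fields; (ii) integrate by parts to obtain $\nabla\times(\Cl\vec\psi)\perp\vec\phi$ for all divergence-free test fields, hence $\nabla\times\Cl\vec\psi=\nabla p$ distributionally; (iii) observe $\nabla\times\nabla\times\vec\psi=\nabla(\nabla\cdot\vec\psi)-\Delta\vec\psi$ and, using that both sides minus a gradient must vanish, deduce $\Delta(\Cl\vec\psi)=\nabla\times\nabla p=0$ componentwise; (iv) invoke that $\Cl\vec\psi\in L^2(\R^3;\R^3)$ by \eqref{H12} and that a harmonic function in $L^2(\R^3)$ vanishes (mean value property plus $L^2$ bound, or Fourier transform supported at the origin), concluding $\Cl\vec\psi=0$.

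The main obstacle I anticipate is the rigorous justification of the integration by parts and the manipulation of $\vec\psi$ as a distribution, since $\vec\psi$ is only in the homogeneous space $\check{H}^1$ and need not be in $L^2$ itself — one must be careful that all the identities ($\nabla\times\nabla\times=\nabla\operatorname{div}-\Delta$, the passage from ``$\perp$ all divergence-free test fields'' to ``is a gradient'', and the Liouville step) hold in the appropriate distributional or tempered sense. Using $\vec\psi\in L^6$ from \eqref{H11} and $\nabla\vec\psi\in L^2$ should give enough integrability to make each step legitimate: in particular $\Cl\vec\psi\in L^2$ is exactly what makes the harmonic-function argument work, and the density Lemma \ref{l011} is what lets us restrict to compactly supported test functions where no boundary terms arise. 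Everything else is routine.
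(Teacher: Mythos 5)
Your argument is correct, but it takes a genuinely different route from the paper. You work at the level of the single field $\vec\psi$ and turn the orthogonality into a distributional PDE: testing against divergence-free $\vec\phi\in C^\infty_c$ (note these lie in $\check H^1_{div}$ directly, so no appeal to Lemma \ref{l011} is actually needed for this reduction), you get that $\nabla\times(\Cl\vec\psi)$ annihilates all such fields, hence is a gradient by the de Rham lemma; since $\nabla\cdot(\Cl\vec\psi)=0$ automatically, the identity $\nabla\times\nabla\times\vec w=\nabla(\nabla\cdot\vec w)-\Delta\vec w$ applied to $\vec w=\Cl\vec\psi$ gives $\Delta(\Cl\vec\psi)=0$, and Weyl's lemma plus the Liouville theorem for harmonic $L^2(\R^3)$ functions (which is where \eqref{H12} enters) forces $\Cl\vec\psi=0$. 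The paper instead argues entirely within the Hilbert-space framework: it approximates $\vec\psi$ by $C^\infty_c$ fields via Lemma \ref{l011}, Hodge-decomposes each approximant $\vec\psi_k=\vec\psi_k^1+\vec\psi_k^2$ with $\nabla\cdot\vec\psi_k^1=0$ and $\nabla\times\vec\psi_k^2=0$, uses orthogonality to kill the cross term $\int(\Cl\vec\psi_k^1)\cdot(\Cl\vec\psi)\,dx$, and reads off a Pythagorean identity $\int|\Cl(\vec\psi_k-\vec\psi)|^2=\int|\Cl\vec\psi_k^1|^2+\int|\Cl\vec\psi|^2\to 0$, which forces $\Cl\vec\psi=0$ with no PDE machinery, no Liouville theorem, and no distributional bookkeeping. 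Your route buys a self-contained local argument that never decomposes the approximants, at the cost of invoking de Rham and Liouville; incidentally you can even bypass de Rham by testing the orthogonality directly with $\vec\phi'=\nabla\times\vec\phi$ for arbitrary $\vec\phi\in C^\infty_c$, which yields $\langle\nabla\times\nabla\times(\Cl\vec\psi),\vec\phi\rangle=0$ and hence $\Delta(\Cl\vec\psi)=0$ at once. Two cosmetic points to fix in a final write-up: keep $\vec\psi$ and $\vec w=\Cl\vec\psi$ cleanly separated in steps (ii)--(iii) (as written the vector identity is stated for $\vec\psi$ but used for $\Cl\vec\psi$), and record explicitly that $\nabla\cdot(\Cl\vec\psi)=0$ before concluding harmonicity.
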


\begin{proof}
For any $\vec\psi\in(\check{H}^1_{div})^{\perp}$, by Lemma \ref{l011} and (\ref{H12}), there exists a sequence $\{\vec\psi_k\}\subset C^{\infty}_c(\R^3;\R^3)$ such that
\begin{equation*}%\label{eq502}
\int_{\R^3}\lt(\lt|\Cl(\vec{\psi}_k-\vec{\psi}) \rt|^2+\lt|\nabla\cdot(\vec{\psi}_k-\vec{\psi})\rt|^2\rt)\,dx \rightarrow 0.
\end{equation*}
In particular, we have 
\begin{equation*}
\int_{\R^3}\lt|\Cl(\vec{\psi}_k-\vec{\psi}) \rt|^2\,dx \rightarrow 0.
\end{equation*}
For each $\vec{\psi}_k$, by standard Hodge decomposition, we have $\vec{\psi}_k=\vec{\psi}_k^1+\vec{\psi}_k^2$, where $\nabla\cdot\vec{\psi}_k^1=0$, $\nabla\times\vec{\psi}_k^2=0$ and $\vec{\psi}_k^j, \nabla\vec{\psi}_k^j \in L^2(\R^3)\cap C^{\infty}(\R^3)$ for $j=1,2$ (see, e.g., Proposition 1.16 in \cite{MaBe}). Therefore, we have $\vec\psi_k^1\in \check{H}^1_{div}(\R^3;\R^3)$ and hence, noting $\vec\psi\in(\check{H}^1_{div})^{\perp}$,
\begin{equation*}
0=\lt(\vec\psi_k^1,\vec\psi\rt)_{\check{H}^1}=\int_{\R^3}\lt(\Cl\vec{\psi}_k^1\rt)\cdot\lt(\Cl\vec{\psi}\rt)\,dx
\end{equation*}
for all $k$. It follows that
\begin{equation*}
\begin{split}
\int_{\R^3}\lt|\Cl(\vec{\psi}_k-\vec{\psi}) \rt|^2\,dx&=\int_{\R^3}\lt|\Cl(\vec{\psi}_k^1-\vec{\psi}) \rt|^2\,dx\\
&=\int_{\R^3}\lt(\lt|\Cl\vec{\psi}_k^1\rt|^2+\lt|\Cl\vec{\psi}\rt|^2\rt)\,dx\rightarrow 0,
\end{split}
\end{equation*}
from which we conclude that $\Cl\vec{\psi}=0$.
\end{proof}

\begin{proof}[Proof of Theorem \ref{t1}]
	We denote $\hat\xi:=\hat v-\hat A\mathbb{1}_{D}$ and $\vec\zeta:=\vec A-\vec a$. Let $H:=L^2(D;\mb R^2)\times\check{H}^1_{div}(\mb R^3;\mb R^3)$, where $\check{H}^1_{div}(\mb R^3;\mb R^3)$ is defined in (\ref{eq510}) with the inner product given in (\ref{eq511}). Then $H$ is a Hilbert space with the inner product
	\begin{equation*}
	\lt((\hat\xi_1,\vec\zeta_1),(\hat\xi_2,\vec\zeta_2)\rt)_H=\int_{D} \hat\xi_1\cdot \hat\xi_2 \, dx + \int_{\R^3} \lt(\Cl\vec\zeta_1\rt) \cdot \lt(\Cl\vec\zeta_2\rt) \, dx
	\end{equation*}
	and the norm 
	\begin{equation*}
	\lVert (\hat\xi,\vec\zeta)\rVert_{H}^2=\lVert\hat\xi\rVert_{L^2(D)}^2+\lVert\Cl\vec\zeta\rVert_{L^2(\R^3)}^2.
	\end{equation*}
	We rewrite $\g(\hat v,\vec A)$ as
	\begin{equation*}%\label{eq520}
	\g(\hat v,\vec A)=\mathcal{F}(\hat{\xi},\vec{\zeta}):=\frac{1}{2}\lVert(\hat{\xi},\vec{\zeta})\rVert_{H}^2+\Phi\lt((\hat{\xi},\vec{\zeta})\rt),
	\end{equation*}
	where $\Phi\lt((\hat{\xi},\vec{\zeta})\rt)=\frac{1}{2h_0}\lt|\cl\lt(\hat{\xi}+\hat{\zeta}+\hat a\rt)\rt|(D)$ and $\lt|\cl\lt(\hat{\xi}+\hat{\zeta}+\hat a\rt)\rt|(D)$ is the total variation of the measure $\cl\lt(\hat{\xi}+\hat{\zeta}+\hat a\rt)$. By convention, $\Phi\lt((\hat{\xi},\vec{\zeta})\rt)$ is understood to equal $+\infty$ if $\cl\lt(\hat{\xi}+\hat{\zeta}+\hat a\rt)$ fails to be a finite Radon measure. It is straightforward to check that $\Phi$ is convex and lower semi-continuous. 

	First we compute the conjugate $\mathcal{F}^*$ of $\mathcal{F}$ given by
	\begin{equation}\label{eqq1}
	\mathcal{F}^*(\hat{\xi},\vec{\zeta}):=\frac{1}{2}\lVert(\hat{\xi},\vec{\zeta})\rVert_{H}^2+\Phi^*\lt(-(\hat{\xi},\vec{\zeta})\rt),
	\end{equation}
	where $\Phi^*$ is the conjugate of $\Phi$ computed according to (\ref{eqp1}). For $(\hat\xi,\vec\zeta)\in H$, we compute
	\begin{equation*}
	\begin{split}
	\Phi^*\lt((\hat{\xi},\vec{\zeta})\rt)&=\sup_{(\hat{\phi},\vec{\psi})\in H}\lt(\int_{D}\hat{\xi}\cdot\hat{\phi}dx+\int_{\mb R^3}(\Cl\vec{\zeta})\cdot(\Cl\vec{\psi})dx-\frac{1}{2h_0}\lt|\cl\lt(\hat{\phi}+\hat{\psi}+\hat a\rt)\rt|(D)\rt)\\
	=\sup_{(\hat{\phi},\vec{\psi})\in H}&\lt(\int_{D}\hat{\xi}\cdot\lt(\hat{\phi}+\hat a\rt)dx+\int_{\mb R^3}(\Cl\vec{\zeta})\cdot(\Cl\vec{\psi})dx-\frac{1}{2h_0}\lt|\cl\lt(\hat{\phi}+\hat{\psi}+\hat a\rt)\rt|(D)\rt)\\
	&\quad-\int_{D}\hat{\xi}\cdot \hat a\, dx\\
	=\sup_{(\hat{\phi},\vec{\psi})\in H}&\lt(\int_{\mb R^3}\lt(\hat{\xi}\cdot\hat{\phi}\mathbb{1}_{D}+(\Cl\vec{\zeta})\cdot(\Cl\vec{\psi})\rt)dx-\frac{1}{2h_0}\lt|\cl\lt(\hat{\phi}+\hat{\psi}\rt)\rt|(D)\rt)-\int_{D}\hat{\xi}\cdot \hat a \,dx.
	\end{split}
	\end{equation*}
	By homogeneity, it is clear that the above supremum in the above last line equals zero if
	\begin{equation}\label{t11}
	\int_{\mb R^3}\lt(\hat{\xi}\cdot\hat{\phi}\mathbb{1}_{D}+(\Cl\vec{\zeta})\cdot(\Cl\vec{\psi})\rt)dx\leq\frac{1}{2h_0}\lt|\cl\lt(\hat{\phi}+\hat{\psi}\rt)\rt|(D) \text{ for all } (\hat\phi,\vec{\psi})\in H,
	\end{equation}
	and it equals infinity if \eqref{t11} fails. It follows that
	\begin{equation*}
	\Phi^*\lt((\hat{\xi},\vec{\zeta})\rt)=
	\begin{cases}
	-\int_{D}\hat{\xi}\cdot \hat a \,dx &\text{ if \eqref{t11} holds},\\
	+\infty &\text{ otherwise},
	\end{cases}
	\end{equation*}
	and thus, by (\ref{eqq1}),
	\begin{equation}\label{t12}
	\mathcal{F}^*(\hat{\xi},\vec{\zeta})=\begin{cases}
	\frac{1}{2}\lVert(\hat{\xi},\vec{\zeta})\rVert_{H}^2+\int_{D}\hat{\xi}\cdot\hat a\,dx &\text{ if \eqref{t11} holds},\\
	+\infty &\text{ otherwise}.
	\end{cases}
	\end{equation}
	
	Now we show that \eqref{t11} is equivalent to the following two conditions
	\begin{equation}\label{t13}
	\int_{\mb R^3}\lt(\Cl\vec\zeta\rt)\cdot\lt(\Cl\vec\psi\rt) dx\leq \frac{1}{2h_0}\int_{D}\lt|\cl\hat{\psi}\rt|dx \text{ for all } \vec{\psi}\in \h,
	\end{equation}
	and
	\begin{equation}\label{t14}
	\vec\zeta\in H^2_{loc}\cap \check{H}^1_{div} \qd\text{ and }\qd \Delta\vec\zeta + \vec\xi\mathbb{1}_{D} = 0 \text{ a.e. in } \R^3,
	\end{equation}
	where recall that $\vec\xi=(\hat\xi,0)$. First, assume that \eqref{t11} holds. Given $\vec{\psi}\in\h$, we write $\vec{\psi}=\vec{\psi}_1+\vec{\psi}_2$ such that $\vec{\psi}_1\in\check{H}^1_{div}$ and $\vec{\psi}_2\in(\check{H}^1_{div})^{\perp}$. By Lemma \ref{l010}, we have $\Cl\vec{\psi}_2=0$. For all $(\hat\phi,\vec\psi)\in L^2(D;\R^2)\times \check{H}^1(\R^3;\R^3)$, using the above decomposition and (\ref{t11}), we have
	\begin{equation}\label{eqa1}
	\begin{split}
	&\int_{\R^3}\lt(\hat\xi\cdot\hat\phi\mathbb{1}_{D}+(\Cl\vec{\zeta})\cdot(\Cl\vec{\psi})\rt) dx\\
	 &\qd\qd\qd= \int_{\R^3}\lt(\hat\xi\cdot\hat\phi\mathbb{1}_{D}+(\Cl\vec{\zeta})\cdot(\Cl\vec{\psi}_1)\rt) dx\\
	&\qd\qd\qd \leq \frac{1}{2h_0}\int_{D} \lt|\cl\lt(\hat\phi+\hat{\psi}_1\rt)\rt| dx = \frac{1}{2h_0}\int_{D} \lt|\cl\lt(\hat\phi+\hat{\psi}\rt)\rt| dx.
	\end{split}
	\end{equation}
	Taking $\hat{\phi}\equiv 0$ and $\vec\psi\in\h$ in (\ref{eqa1}), we obtain
	\begin{equation*}
	\int_{\R^3}(\Cl\vec{\zeta})\cdot(\Cl\vec{\psi}) dx\leq \frac{1}{2h_0}\int_{D} \lt|\cl\hat{\psi}\rt| dx\text{ for all } \vec{\psi}\in \h,
	\end{equation*}
	which is \eqref{t13}. Next, by taking $\vec{\psi}\in \check{H}^1(\R^3;\R^3)$ and $\hat{\phi}=-\hat{\psi}\mathbb{1}_{D}$ in \eqref{eqa1}, we obtain
	\begin{equation}\label{eq31}
	\int_{\R^3}\lt(-\hat{\xi}\cdot\hat{\psi}\mathbb{1}_{D}+(\Cl\vec{\zeta})\cdot(\Cl\vec{\psi})\rt)dx = 0 \text{ for all } \vec\psi\in\check{H}^1(\R^3;\R^3).
	\end{equation}
	In  particular, (\ref{eq31}) holds for all $\vec{\psi}\in C^{\infty}_c(\R^3;\R^3)$. Direct calculations using integration by parts and the fact that $\nabla\cdot\vec{\zeta}=0$ yield
	\begin{equation*}
	\int_{\R^3}(\Cl\vec\zeta)\cdot(\Cl\vec\psi)\,dx=\int_{\R^3}(\nabla\vec\zeta)\cdot(\nabla\vec\psi)dx \text{ for all } \vec\psi\in C^{\infty}_c(\R^3;\R^3)
	\end{equation*}
	where $(\nabla\vec{\zeta})\cdot(\nabla\vec{\psi})=\sum_{j=1}^{3}\nabla\zeta^j \cdot \nabla\psi^j$, and thus
	\begin{equation*}
	\int_{\R^3}\lt(-\hat{\xi}\cdot\hat{\psi}\,\mathbb{1}_{D}+(\nabla\vec{\zeta})\cdot(\nabla\vec{\psi})\rt)dx = 0 \text{ for all } \vec\psi\in C^{\infty}_c(\R^3;\R^3).
	\end{equation*}
	It follows that $-\vec{\xi}\,\mathbb{1}_{D}-\Delta\vec{\zeta} = 0$ in the weak sense in $\R^3$. By standard elliptic regularity (see, e.g., \cite{GT}), we have that $\vec{\zeta}\in H^2_{loc}(\R^3;\R^3)$ and hence we have (\ref{t14}).
	
	Conversely, assume that \eqref{t13} and \eqref{t14} hold. Given $(\hat{\phi},\vec{\psi})\in H$, if $\cl\hat{\phi}$ fails to be a finite Radon measure, then (\ref{t11}) is trivially satisfied. Therefore, we may assume without loss of generality that $\hat{\phi}\in V$, where recall that the space $V$ is defined in (\ref{V}). We will need the following technical lemma:
	
	\begin{lemma}\label{l7}
	Let $(\hat{\phi},\vec{\psi})\in V\times \check{H}^1_{div}$. Then there exists a sequence $\{(\hat{\phi}_k,\vec{\psi}_k)\}_k\subset C_{c}^{\infty}(\R^3;\R^2)\times C_0^{\infty}(\R^3;\R^3)$ with the following properties:
	\begin{equation}\label{eq10}
	\hat{\phi}_k|_D \rightarrow \hat{\phi} \text{ in } L^2(D),\qd \Cl\vec{\psi}_k \rightharpoonup \Cl\vec{\psi} \text{ in } L^2(\R^3),
	\end{equation} 
	and
	\begin{equation}\label{eq11}
	|\cl(\hat{\phi}_k+\hat{\psi}_k)|(D) \rightarrow |\cl(\hat{\phi}+\hat{\psi})|(D).
	\end{equation}
	\end{lemma}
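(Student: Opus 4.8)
The plan is to prove Lemma \ref{l7} by a two-stage approximation: first handle the 2d part $\hat\phi$ inside $D$, then handle the 3d part $\vec\psi$, and finally reconcile the two so that the total variation of $\cl(\hat\phi+\hat\psi)$ converges rather than merely being lower semicontinuous. The point of the lemma is that $V$ is a space of $L^2$ fields with measure-valued curl, so smooth fields are not dense in $V$ in any strong topology that also controls the total variation; we can only hope for strict convergence of $|\cl(\hat\phi_k+\hat\psi_k)|(D)$ together with $L^2$-convergence of $\hat\phi_k$ and weak convergence of the curls. This mismatch of topologies is exactly what makes the statement delicate.

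First I would treat $\vec\psi$. Since $\vec\psi\in\check H^1_{div}(\R^3;\R^3)$, by Lemma \ref{l011} there is a sequence in $C^\infty_c$ converging to $\vec\psi$ in the $\check H^1$ norm; projecting onto the divergence-free part (or just keeping track that $\Cl$ is continuous on $\check H^1$) gives $\vec\psi_k\in C^\infty_0(\R^3;\R^3)$ with $\Cl\vec\psi_k\to\Cl\vec\psi$ strongly in $L^2(\R^3)$, in particular $\cl\hat\psi_k\to\cl\hat\psi$ strongly in $L^2(D)$. The 2d current $\hat\psi$ of a $\check H^1$ field has curl in $L^2$, so this part contributes no singular measure and strong $L^2$ convergence of the densities suffices. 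The genuinely hard part is $\hat\phi\in V$: here $\cl\hat\phi$ is only a Radon measure on $D$. I would regularize by convolution in the $\hat x=(x_1,x_2)$ variables only, using a standard 2d mollifier $\rho_\delta$, so that $\hat\phi^\delta:=\hat\phi*_{\hat x}\rho_\delta$ is smooth in $\hat x$ for a.e.\ $x_3$, with $\hat\phi^\delta\to\hat\phi$ in $L^2(D)$ and $\cl\hat\phi^\delta=(\cl\hat\phi)*_{\hat x}\rho_\delta$ converging weakly-$*$ as measures with $|\cl\hat\phi^\delta|(D)\le|\cl\hat\phi|(D)$ and, by lower semicontinuity, $|\cl\hat\phi^\delta|(D)\to|\cl\hat\phi|(D)$ as $\delta\to0$ (the mollified measure has total mass converging up to the full mass, since layer-by-layer convolution is mass non-increasing in the total variation and one uses Fubini in $x_3$ together with weak-$*$ convergence). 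A small technical point is that convolution in $\hat x$ near $\partial\Omega\times(0,L)$ requires either extending $\hat\phi$ by zero to a slightly larger cylinder (possible since $\cl\hat\phi$ is a finite measure on $D$ and the extension only adds mass on $\partial\Omega$, which one then shows is negligible by shrinking the domain slightly) or using a diffeomorphism that flattens the boundary; I would choose the extension route and absorb the boundary error into the $o(1)$.

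Once both pieces are smooth, I would combine: set $\hat\phi_k=\hat\phi^{\delta_k}\chi_k$ where $\chi_k$ is a cutoff making the field compactly supported in $\R^2$ (this is harmless since $\hat\phi^\delta$ is already supported in $\overline\Omega$, so really just a mollified indicator of $\Omega$ times a product cutoff in $x_3$, adjusted so $\hat\phi_k\in C^\infty_c(\R^3;\R^2)$), and $\vec\psi_k$ as above. The convergences \eqref{eq10} are then immediate from the two constructions. For \eqref{eq11} the key identity is $\cl(\hat\phi_k+\hat\psi_k)=\cl\hat\phi_k+\cl\hat\psi_k$ with $\cl\hat\psi_k\to\cl\hat\psi$ strongly in $L^2(D)\subset\mathcal M(D)$ and $\cl\hat\phi_k\to\cl\hat\phi$ weakly-$*$ with convergence of total masses; since a strongly convergent $L^2$ perturbation does not destroy strict convergence of total variations (write $|\mu_k+f_k|(D)$, compare with $|\mu_k+f|(D)$ using $||\mu_k+f_k|(D)-|\mu_k+f|(D)|\le\|f_k-f\|_{L^1(D)}$, and then use that $\mu_k\rightharpoonup^*\mu$ with $|\mu_k|(D)\to|\mu|(D)$ implies $|\mu_k+f|(D)\to|\mu+f|(D)$ by approximating $f\,dx$ in a suitable sense), we get $|\cl(\hat\phi_k+\hat\psi_k)|(D)\to|\cl(\hat\phi+\hat\psi)|(D)$. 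The main obstacle, and where I would spend the most care, is establishing that the $\hat x$-mollification of the measure $\cl\hat\phi$ has total variation on $D$ converging to the full mass despite the boundary: this needs the boundary-extension argument and a Fubini/slicing estimate in $x_3$, and is the only place where the cylindrical geometry $D=\Omega\times(0,L)$ and the fact that $V$ only controls $\cl\hat\phi$ (and not $\hat\phi$ itself in any better space) genuinely enter.
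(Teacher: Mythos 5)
Your treatment of $\vec\psi$ via Lemma \ref{l011} is fine (strong $L^2$ convergence of $\nabla\times\vec\psi_k$ is more than \eqref{eq10} asks), but two steps in the rest do not hold as written. The boundary treatment of $\hat\phi$ at $\partial\Omega$ is a genuine gap, not a small technical point: if you extend $\hat\phi\in V$ by zero outside $\Omega$, the distributional $2$d curl of the extension acquires a tangential-jump term concentrated on $\partial\Omega\times(0,L)$; for a general $L^2$ field with measure curl this term need not be a finite measure and in any case has no reason to be small, and mollification smears part of it into $D$, so $\limsup_\delta|\cl\hat\phi^\delta|(D)$ can strictly exceed $|\cl\hat\phi|(D)$. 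Even the asserted mass non-increase $|\cl\hat\phi^\delta|(D)\le|\cl\hat\phi|(D)$ fails for the same reason, since by duality it requires $\rho_\delta\ast\varphi$ to stay supported in $\Omega$ in the $\hat x$ variables. ``Shrinking the domain slightly'' is exactly what must be made quantitative: the paper's proof does this by composing with an inward diffeomorphism $\hat\Psi_\ep$ mapping $\Omega_\ep$ into $\Omega$, so that when testing against $\varphi\in C^\infty_c(D)$ no values of $\hat\phi$ outside $\Omega$ are ever used, together with the slice extension $P_\ep$ in $x_3$ whose extra curl mass is explicitly controlled by the curl mass of two well-chosen slices. Nothing in your outline replaces this mechanism.

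Second, your combination step invokes a false principle: $\mu_k\rightharpoonup^*\mu$ with $|\mu_k|(D)\to|\mu|(D)$ does \emph{not} imply $|\mu_k+f|(D)\to|\mu+f|(D)$ for a fixed $f\in L^1(D)$. For instance on $(0,1)$ take $f\equiv -1$ and $\mu_k$ with density given by $k$ equally spaced spikes of height $k$ and width $k^{-2}$: then $\mu_k\rightharpoonup^* dx$ with total masses converging, yet $|\mu_k+f\,dx|\to 2$ while $|\,dx+f\,dx|=0$. In your specific situation this could only be repaired by exploiting that $\cl\hat\phi_k$ is (essentially) a mollification of $\cl\hat\phi$ and comparing with the mollification of the \emph{sum} $\cl(\hat\phi+\hat\psi)$ up to $L^1$-small errors; but that amounts to running the regularization jointly on $\hat\chi=\hat\phi+\hat\psi$, which is precisely what the paper does by pushing the pair through the same operators $P_\ep$, $\hat\Phi_\ep^*$ and $\rho_{\sigma(\ep)}$. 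Two smaller points: mollifying only in $\hat x$ does not produce $\hat\phi_k\in C^\infty_c(\R^3;\R^2)$ (you also need smoothing in $x_3$, harmless since $\cl$ involves only $\hat x$ derivatives, but it must be arranged), and any slicing identity of the type $|\cl\hat v|(D)=\int_0^L|\cl\hat v(\cdot,x_3)|(\Omega)\,dx_3$ cannot be taken for granted here, since the paper derives it (Lemma \ref{l13}) from Lemma \ref{l7} itself.
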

	
	We postpone the proof of Lemma \ref{l7} to the end of this section. Now for given $(\hat{\phi},\vec{\psi})\in V\times \check{H}^1_{div}$, let $\{(\hat{\phi}_k,\vec{\psi}_k)\}\subset C_{c}^{\infty}(\R^3;\R^2)\times C_0^{\infty}(\R^3;\R^3)$ be the sequence found in Lemma \ref{l7} satisfying the properties (\ref{eq10})-(\ref{eq11}). We deduce from \eqref{t13} and \eqref{t14} that
	\begin{equation*}
	\begin{split}
	\int_{\mb R^3}\lt(\vec{\xi}\cdot\vec\phi_k\,\mathbb{1}_{D}+(\Cl\vec{\zeta})\cdot(\Cl\vec{\psi}_k)\rt)dx&=\int_{\mb R^3}\lt(-\Delta\vec{\zeta}\cdot\vec\phi_k+(\Cl\vec{\zeta})\cdot(\Cl\vec{\psi}_k)\rt)dx\\
	&=\int_{\mb R^3}(\Cl\vec{\zeta})\cdot\lt(\Cl(\vec{\phi}_k+\vec{\psi}_k)\rt)dx\\
	&\leq \frac{1}{2h_0}\int_{D}\lt|\cl(\hat{\phi}_k+\hat{\psi}_k)\rt|dx.
	\end{split}
	\end{equation*}
	Therefore, passing to the limit as $k\rightarrow \infty$ and using (\ref{eq10})-(\ref{eq11}), we conclude that $\eqref{t11}$ holds for all $(\hat{\phi},\vec{\psi})\in V\times\check{H}^1_{div}$.
	
	Recall the expression for $\mathcal{F}^*$ in (\ref{t12}). When $\mathcal{F}^*$ is finite, the condition (\ref{t11}) is satisfied and thus $\vec{\zeta}\in H^2_{loc}$. Direct calculations using $\nabla\cdot\vec{\zeta}=0$ give $-\Delta\vec{\zeta}=\Cl(\Cl\vec{\zeta})$. Rewriting $\mathcal{F}^*$ using $\vec{B}=\Cl(\vec A-\vec a)=\Cl\vec\zeta$ and (\ref{t14}), we have
	\begin{equation*}
	\begin{split}
	\mathcal{F}^*(\hat{\xi},\vec{\zeta})&=\frac{1}{2}\int_{D}\lt|\hat{\xi}\rt|^2dx + \frac{1}{2}\int_{\mb R^3}\lt|\Cl\vec\zeta\rt|^2dx+\int_{D}\hat{\xi}\cdot\hat{a}\,dx\\
	&=\frac{1}{2}\int_{D}\lt|\hat{\xi}+\hat a\rt|^2dx + \frac{1}{2}\int_{\mb R^3}\lt|\Cl\vec\zeta\rt|^2dx-\frac{1}{2}\int_{D}\lt|\hat{a}\rt|^2dx\\
	&=\frac{1}{2}\int_{D}\lt|\Cl\Cl\vec\zeta+\vec a\rt|^2dx + \frac{1}{2}\int_{\mb R^3}\lt|\Cl\vec\zeta\rt|^2dx-\frac{1}{2}\int_{D}\lt|\hat{a}\rt|^2dx\\
	&=\frac{1}{2}\int_{D}\lt|\Cl\vec B+\vec a\rt|^2dx + \frac{1}{2}\int_{\mb R^3}\lt|\vec B\rt|^2dx-\frac{1}{2}\int_{D}\lt|\hat{a}\rt|^2dx,
	\end{split}
	\end{equation*}
	where in the above we have used $a^3=0$. According to Lemma \ref{l2}, denoting $\hat\xi_0=\hat v_0-\hat A_0\mathbb{1}_{D}$ and $\vec\zeta_0=\vec A_0-\vec a$, we have
	\begin{equation*}
	\begin{split}
	(\hat v_0,\vec A_0) \text{ minimizes } \g&\Longleftrightarrow (\hat\xi_0,\vec{\zeta}_0) \text{ minimizes } \mathcal{F}\\
	&\Longleftrightarrow (\hat\xi_0,\vec{\zeta}_0) \text{ minimizes } \mathcal{F}^*\\
	&\Longleftrightarrow \vec B_0 \text{ minimizes } \mathcal{E}_{0},
	\end{split}
	\end{equation*}
	provided that \eqref{t13} and \eqref{t14} are satisfied. This completes the proof of Theorem \ref{t1}.
\end{proof}

\begin{proof}[Proof of Corollary \ref{c3}]
Let $(\hat v_0,\vec{A}_0)\in V\times K_0$ be a minimizer of $\mathcal{G}_{h_0}$. By Theorem \ref{t1}, we know that $\vec{B}_0=\Cl(\vec{A}_0-\vec{a})\in\mathcal{C}_{h_0}$. In particular, we have
\begin{equation*}
\int_{\mb R^3}\vec B_0\cdot\lt(\nabla\times\vec{\psi}\rt)dx\leq\frac{1}{2h_0}\int_{D}|\cl\hat{\psi}|dx \text{ for all }\vec\psi\in H^1(\mb R^3;\mb R^3).
\end{equation*}
Taking $\vec{\psi}=(0,0,\pm\psi^3)$ with $\psi^3\in C^{\infty}_{c}(\R^3)$ in the above and noting that $\supp(\Cl\vec{B}_0)\subset \overline{D}$, we obtain
\begin{equation*}%\label{eq12}
0=\int_{\mb R^3}\vec B_0\cdot\lt(\nabla\times\vec{\psi}\rt)dx = \int_{D}(\Cl\vec B_0)\cdot\vec{\psi}\,dx = \int_{D}(\Cl\vec B_0)^3\,\psi^3\,dx\text{ for all } \psi^3\in C^{\infty}_{c}(\R^3),
\end{equation*}
where we denote by $(\Cl\vec B_0)^3$ the $x_3$-component of the vector $\Cl\vec B_0$. Taking a sequence $\{\phi^3_k\}\subset C_c^{\infty}(\R^3)$ that converges strongly to $(\Cl\vec B_0)^3$ in $L^2(D)$, we conclude that $(\Cl\vec B_0)^3 = 0$ in $\R^3$. As $\Cl \vec B_0=\Cl(\Cl(\vec A_0-\vec a))=-\Delta\vec A_0$, we have $-\Delta A_0^3=0$ in $\R^3$. Since $A_0^3-a^3\in L^6(\R^3)$ and $a^3=0$, the maximum principle implies that $A_0^3 = 0$ in $\R^3$ as desired.

To obtain (\ref{t1.1}), we use the fact that $\frac{d}{dt}\mathcal{G}_{h_0}(\hat v_0e^t,\vec{A}_0)|_{t=0} = 0$ to deduce 
	\begin{equation*}
	\int_{D}\lt(\hat v_0-\hat{A}_0\rt)\cdot \hat v_0\, dx + \frac{1}{2h_0}\lt|\cl \hat v_0\rt|(D) = 0.
	\end{equation*}
	It is clear that (\ref{t1.1}) follows from this and (\ref{t1.2}).
\end{proof}

We conclude this section with the proof of Lemma \ref{l7}. This result is similar to Proposition 2.3 in \cite{Pe} and the proof is provided in detail there. Here we provide another proof that adapts the proof of Lemma 15 in \cite{BJOS2} to our anisotropic setting. We provide the details for the benefits of later discussions and for the convenience of the readers. 

\begin{proof}[Proof of Lemma \ref{l7}]
Recall that $D=\Omega\times(0,L)$, where $\Omega\subset\R^2$ is a bounded smooth domain. Given sufficiently small $\ep>0$, we denote 
\begin{equation*}
\Omega_{\ep}:= \{\hat{x}\in\R^2: \dist(\hat{x},\Omega)< \ep\} \qd\text{ and }\qd \Omega_{-\ep}:= \{\hat{x}\in\Omega: \dist(\hat x,\partial\Omega)>\ep \}.
\end{equation*}
We define a planar diffeomorphism $\hat\Psi_{\ep}:\mb R^2\rightarrow\mb R^2$ to be of the form $\hat{\Psi}_{\ep}(\hat x) = \hat{x}-f_{\ep}(d(\hat{x}))\hat{n}(\hat{x})$, where $\hat{n}(\hat{x})$ is the outer unit normal to $\partial\Omega$ passing through $\hat{x}$, $d(\hat{x})$ is the signed distance from $\hat x$ to $\partial\Omega$ and $f_{\ep}$ is a nonnegative smooth function with compact support in $(-\sqrt{\ep},\sqrt{\ep})$ such that $|f_{\ep}'|\leq C\sqrt{\ep}$ and $f_{\ep}(\ep)>\ep$. Then we have $\hat\Psi_{\ep}(\Omega_{\ep})\subset \Omega$ and $\hat\Psi_{\ep}(\hat x)=\hat x$ for $\hat x\in (\R^2\setminus\Omega_{\sqrt{\ep}})\cup\Omega_{-\sqrt{\ep}}$. Further, $\lVert \hat D\hat\Psi_{\ep}-I_{2\times 2}\rVert_{\infty}\leq C\sqrt{\ep}$ and $\lVert \hat D\hat\Psi_{\ep}^{-1}-I_{2\times 2}\rVert_{\infty}\leq C\sqrt{\ep}$, where $I_{2\times 2}$ is the $2\times 2$ identity matrix. Next we take a cut-off function $\eta\in C^{\infty}_c(\R)$ with $0\leq\eta\leq 1$, $\eta\equiv 1$ on $[-1,L+1]$, $\mathrm{supp}(\eta)\subset(-3,L+3)$ and $|\eta'|\leq 1$. We define the diffeomorphism $\Phi_{\ep}: \R^3\rightarrow\R^3$ to be $\Phi_{\ep}(x)=(\hat\Psi_{\ep}(\hat x)\eta(x_3),x_3)$. Because of the properties for $\hat\Psi_{\ep}$ and $\eta$, it is clear that
\begin{equation}\label{eq13}
\Phi_{\ep}(\Omega_{\ep}\times(-\ep,L+\ep))\subset \Omega\times (-\ep,L+\ep)
\end{equation}
and
\begin{equation}\label{eqa11}
\Phi_{\ep}(x)=x \text{ for } x\in \R^3\setminus\Lambda_{\ep},
\end{equation}
where $\Lambda_{\ep}:=(\Omega_{\sqrt{\ep}}\setminus\Omega_{-\sqrt{\ep}})\times(-3,L+3)$. %Further
%\begin{equation}\label{eq14}
%\lVertD\Phi_{\ep}-I\rVert_{\infty}\leq C\sqrt{\ep}, \quad \lVert D\Phi_{\ep}^{-1}-I\rVert_{\infty}\leq C\sqrt{\ep}.
%\end{equation}

Let $(\hat\phi,\vec\psi)\in V\times \check{H}^1_{div}$. Given $\ep>0$ sufficiently small, we can find $s_{1}^{\ep}\in(0,\ep)$ and $s_2^{\ep}\in (L-\ep,L)$ such that $\hat\phi(\cdot,s_j^{\ep})\in L^2(\Omega)$ and $\hat\psi(\cdot,s_j^{\ep})\in \check H^1(\R^2)$ for $j=1,2$. Now we define $P_{\ep}(\hat\phi)$ to be
\begin{equation}\label{eqa10}
P_{\ep}(\hat\phi)(x):=\begin{cases}
\hat\phi(\hat x, x_3+\ep+s_1^{\ep})& \text{for }x_3<-\ep,\\
\hat\phi(\hat x,s_1^{\ep})& \text{for }x_3\in(-\ep,s_1^{\ep}),\\
\hat\phi(x) &\text{for }x_3\in[s_1^{\ep},s_2^{\ep}],\\
\hat\phi(\hat x,s_2^{\ep}) & \text{for }x_3\in(s_2^{\ep},L+\ep),\\
\hat\phi(\hat x, x_3-\ep-L+s_2^{\ep}) & \text{for }x_3>L+\ep,
\end{cases}
\end{equation}
where $\hat \phi$ is extended to be zero outside $D$. Similarly we can define $P_{\ep}(\hat\psi)$ by (\ref{eqa10}) with $\hat\phi$ replaced by $\hat\psi$. By continuity of translation, as $\ep\rightarrow 0^+$, we have $P_\ep(\hat\phi)\rightarrow\hat\phi$ in $L^2(\R^3;\R^2)$ and $(P_\ep(\hat\psi),\psi^3)\rightarrow\vec\psi$ in $\check{H}^1(\R^3;\R^3)$, and thus $(P_\ep(\hat\psi),\psi^3)\rightarrow\vec\psi$ in $L^6(\R^3;\R^3)$ by (\ref{H11}) (here we implicitly choose the representative of $\check{H}^1$ elements which also belong to $L^6$). Given $\sigma>0$, let $\rho_{\sigma}$ denote the standard mollifier, i.e., $\rho_{\sigma}\in C^{\infty}_c(\R^3)$ with $\supp(\rho_{\sigma})\subset B_{\sigma}(0)\subset \R^3$ and $\int_{\R^3}\rho_{\sigma} dx = 1$. Define $\hat{\phi}_{\ep} := \rho_{\sigma(\ep)}\ast\hat{\Phi}_{\ep}^*(P_{\ep}(\hat\phi))$, $\hat{\psi}_{\ep}:=\rho_{\sigma(\ep)}\ast\hat{\Phi}_{\ep}^*(P_{\ep}(\hat\psi))$ and $\psi^3_{\ep}:=\rho_{\sigma(\ep)}\ast\psi^3$, where $0<\sigma(\ep)<\ep$ is sufficiently small depending on the size of $\ep$ and $\hat\Phi_{\ep}^*$ denotes the pullback of $\hat\Phi_{\ep}$, i.e., $\hat\Phi_{\ep}^*(\hat w)(x) = [\hat D\hat\Phi_{\ep}]^T \hat{w}(\Phi_{\ep}(x))$. 

Now we verify that the sequence $\{(\hat{\phi}_{\ep},\vec{\psi}_{\ep})\}\subset C_c^{\infty}(\R^3;\R^2)\times C_0^{\infty}(\R^3;\R^3)$ constructed above satisfies the properties in Lemma \ref{l7}. For any $\delta>0$, we have
\begin{equation}\label{eq15}
\begin{split}
&\int_{\R^3}|\hat{\phi}_{\ep}-\hat{\phi}|^2\, dx\leq C\int_{\R^3} |\hat{\phi}_{\ep}-\hat\Phi_{\ep}^*(P_{\ep}(\hat{\phi}))|^2\,dx\\
&\qd\qd\qd\qd\qd\qd\qd\qd + C\int_{\R^3} |\hat\Phi_{\ep}^*(P_{\ep}(\hat{\phi}))-P_{\ep}(\hat{\phi})|^2dx+C\int_{\R^3}|P_{\ep}(\hat\phi)-\hat\phi|^2dx.
\end{split}
\end{equation}
Note that by (\ref{eqa11}), we have
\begin{equation*} 
\int_{\R^3} |\hat\Phi_{\ep}^*(P_{\ep}(\hat{\phi}))-P_{\ep}(\hat{\phi})|^2dx = \int_{\Lambda_{\ep}} |\hat\Phi_{\ep}^*(P_{\ep}(\hat{\phi}))-P_{\ep}(\hat{\phi})|^2dx.
\end{equation*}
Since $\lVert \hat D\hat\Phi_{\ep}\rVert_{\infty}<C$ for some constant $C$ independent of $\ep$, it follows that $|\hat\Phi_{\ep}^*(P_{\ep}(\hat{\phi}))(x)|\leq C|P_{\ep}(\hat{\phi})(\Phi_{\ep}(x))|$, and hence the above integral on the right hand side converges to zero as $\ep\rightarrow 0$ by the dominated convergence theorem. Therefore, for $\ep$ sufficiently small, we have
\begin{equation*}
C\int_{\R^3} |\hat\Phi_{\ep}^*(P_{\ep}(\hat{\phi}))-P_{\ep}(\hat{\phi})|^2dx <\frac{\delta}{3}
\end{equation*}
and 
\begin{equation*}
C\int_{\R^3}|P_{\ep}(\hat\phi)-\hat\phi|^2dx <\frac{\delta}{3}.
\end{equation*}
Further, choosing $\sigma(\ep)$ sufficiently small depending on $\ep$, we have
\begin{equation*}
C\int_{\R^3} |\hat{\phi}_{\ep}-\Phi_{\ep}^*(\hat{\phi})|^2dx <\frac{\delta}{3}.
\end{equation*}
Hence we conclude from (\ref{eq15}) that $\hat{\phi}_{\ep}|_{D}\rightarrow \hat{\phi}$ in $L^2(D;\R^2)$. The proof of $\vec{\psi}_{\ep}\rightarrow\vec{\psi}$ in $L^6(\R^3;\R^3)$ follows exactly the same lines. Finally, note that 
$$\lVert\nabla\hat\psi_{\ep}\rVert_{L^2(\R^3)}\leq \lVert\nabla\lt(\hat\Phi_{\ep}^*(P_{\ep}(\hat\psi))\rt)\rVert_{L^2(\R^3)}\leq C\lt(\lVert\nabla P_{\ep}(\hat\psi)\rVert_{L^2(\R^3)}+\lVert P_{\ep}(\hat\psi)\rVert_{L^2(\R^3)}\rt)<C$$ 
for constants $C$ independent of $\ep$. Recall that $\psi^3_{\ep}:=\rho_{\sigma(\ep)}\ast\psi^3$ and thus $\lVert\nabla\psi_\ep^3\rVert_{L^2(\R^3)}\leq \lVert\nabla\psi^3\rVert_{L^2(\R^3)}$. Hence, upon extraction of a subsequence (without relabeled), we have $\nabla\vec\psi_{\ep}\rightharpoonup\nabla\vec\psi$, and, in particular, $\Cl\vec\psi_{\ep}\rightharpoonup\Cl\vec\psi$ in $L^2(\R^3)$.

Finally we verify (\ref{eq11}). To this end, we take a test function $\varphi \in C^{\infty}_c(D)$ with $\lVert \varphi\rVert_{\infty}\leq 1$. We denote $\hat\chi:=\hat{\phi}+\hat{\psi}$ and $\hat{\chi}_{\ep}:= \hat{\phi}_{\ep}+\hat{\psi}_{\ep}$. 
%It follows that 
%\begin{equation}
%\hat\chi_{\ep} = \rho_{\sigma}\ast \hat{\Phi}_{\ep}^*(\hat\chi)
%\end{equation}
%and
%\begin{equation}
%\hat{\Phi}_{\ep}^*(\hat\chi)= \lt(\partial_1 \Phi_{\ep}^1\,\chi^1(\hat\Phi_{\ep})+\partial_1 \Phi_{\ep}^2\,\chi^2(\hat\Phi_{\ep}),\partial_2 \Phi_{\ep}^1\,\chi^1(\hat\Phi_{\ep})+\partial_2 \Phi_{\ep}^2\,\chi^2(\hat\Phi_{\ep})\rt).
%\end{equation}
We compute
\begin{equation}\label{eq16}
\begin{split}
\int_{D} \lt(\cl\hat\chi_{\ep}\rt)\varphi\, dx &= -\int_{D} \hat{\chi}_{\ep}\cdot \hat\nabla^{\perp}\varphi \,dx\\
&= -\int_{\R^3} \rho_{\sigma(\ep)}\ast\hat{\Phi}_{\ep}^*(P_{\ep}(\hat\chi)) \cdot \hat\nabla^{\perp}\varphi \, dx \\
&=- \int_{\R^3} \hat{\Phi}_{\ep}^*(P_{\ep}(\hat\chi)) \cdot \hat\nabla^{\perp}\varphi_{\sigma(\ep)} \, dx,
\end{split}
\end{equation}
where $\varphi_{\sigma}:=\rho_{\sigma}\ast\varphi$. Note that $\eta'=0$ in $\mathrm{supp}(\varphi_{\sigma})\subset\Omega_{\ep}\times(-\ep,L+\ep)$. Thus, in $\mathrm{supp}(\varphi_{\sigma})$, we have 
$$D\Phi_{\ep} = \lt(\begin{matrix} \partial_1 \Phi_{\ep}^1 & \partial_2 \Phi_{\ep}^1 & 0 \\ \partial_1 \Phi_{\ep}^2 & \partial_2 \Phi_{\ep}^2 & 0 \\ 0 & 0 & 1 \end{matrix}\rt),$$
and $\det \lt[D\Phi_{\ep}^{-1}\rt]=\det \lt[\hat D\hat\Phi_{\ep}^{-1}\rt]=1/\det \lt[\hat D\hat\Phi_{\ep}\rt]$. Direct calculations using the change of variables $y=\Phi_{\ep}(x)$ give
\begin{equation*}%\label{eq17}
\hat{\Phi}_{\ep}^*(P_{\ep}(\hat\chi)) \cdot \hat\nabla^{\perp}\varphi_{\sigma}= \det \lt[\hat{D}\hat{\Phi}_{\ep}\rt] \,P_{\ep}(\hat\chi)\cdot\hat\nabla^{\perp}_y \varphi_{\sigma}. 
\end{equation*}
%Note that $\det \lt[D\Phi_{\ep}^{-1}\rt]=1/\det \lt[D\Phi_{\ep}\rt]=1/\det \lt[\hat D\hat\Phi_{\ep}\rt]$. 
It follows that
\begin{equation}\label{eq18}
\begin{split}
\int_{D} \lt(\cl\hat\chi_{\ep}\rt)\varphi\, dx &= -\int_{\mathrm{supp}(\varphi_{\sigma})}\det \lt[\hat{D}\hat{\Phi}_{\ep}\rt] \,P_{\ep}(\hat\chi)\cdot\hat\nabla^{\perp}_y \varphi_{\sigma} \det \lt[D\Phi_{\ep}^{-1}\rt]\,dy\\
%&\qd\qd\qd=-\int_{\R^3}\det \lt[\hat{D}\hat{\Phi}_{\ep}(\Phi_{\ep}^{-1}(y))\rt] \,\hat{\chi}(y)\cdot\hat\nabla^{\perp}_y \varphi_{\sigma} \lt|\det \lt[\hat{D}\hat\Phi_{\ep}^{-1}(y)\rt]\rt|\,dy\\
&= -\int_{\R^3}P_{\ep}(\hat\chi)\cdot\hat\nabla^{\perp}_y \varphi_{\sigma}\,dy.
\end{split}
\end{equation}
Note that $\supp(\varphi_{\sigma}(\Phi_{\ep}^{-1}(y)))\subset \Phi_{\ep}(\Omega_{\sigma}\times(-\sigma,L+\sigma))\subset \Phi_{\ep}(\Omega_{\ep}\times(-\ep,L+\ep))\overset{(\ref{eq13})}{\subset} D_{\ep}$, where $D_{\ep}:=\Omega\times(-\ep,L+\ep)$. Therefore, $\varphi_{\sigma}\in C^{\infty}_c(D_{\ep})$ and $\lVert \varphi_{\sigma}\rVert_{\infty} \leq \lVert \varphi\rVert_{\infty}\leq 1$. Since (\ref{eq18}) holds for all $\varphi$, we deduce that
\begin{equation}\label{eqa13}
|\cl\hat\chi_{\ep}|(D)\leq |\cl P_{\ep}(\hat\chi)|(D_{\ep}).
\end{equation}
By construction of $P_{\ep}$, we know that $|\cl P_{\ep}(\hat\chi)|(\Omega\times\{s_{\ep}^j\})=0$ for $j=1,2$. Thus, 
\begin{equation}\label{eqa12}
\begin{split}
|\cl P_{\ep}(\hat\chi)|(D_{\ep})&\leq |\cl\hat\chi|(\Omega\times(s_{\ep}^1,s_{\ep}^2))+|\cl P_{\ep}(\hat\chi)|(\Omega\times(-\ep,s_{\ep}^1))\\
&\qd\qd\qd+|\cl P_{\ep}(\hat\chi)|(\Omega\times(s_{\ep}^2,L+\ep))\\
&\leq |\cl\hat\chi|(D)+(\ep+s_{\ep}^1)|\cl \hat\chi(\cdot,s_{\ep}^1)|(\Omega)\\
&\qd\qd\qd+(L+\ep-s_{\ep}^2)|\cl \hat\chi(\cdot,s_{\ep}^2)|(\Omega).
\end{split}
\end{equation}
Putting (\ref{eqa13}) and (\ref{eqa12}) together and letting $\ep\rightarrow 0$, we obtain
\begin{equation}\label{eq19}
\limsup_{\ep\rightarrow 0} |\cl\hat\chi_{\ep}|(D)\leq |\cl\hat\chi|(D). 
\end{equation}
On the other hand, (\ref{eq19}) implies that $\{\cl\hat\chi_{\ep}\}$ has a subsequence that converges weakly* to some finite Radon measure $\mu$. As $\hat\chi_{\ep}\rightarrow \hat\chi$ in $L^2(D;\R^2)$, it is clear that $\mu = \cl\hat\chi$. By lower semicontinuity of the total variation with respect to the weak* convergence (see, e.g., Theorem 1.59 in \cite{AFP}), we obtain
\begin{equation}\label{eq20}
\liminf_{\ep\rightarrow 0} |\cl\hat\chi_{\ep}|(D)\geq |\cl\hat\chi|(D). 
\end{equation}
Putting (\ref{eq19}) and (\ref{eq20}) together we obtain (\ref{eq11}).  
\end{proof}

% % % % % % % % % % % % % % % % % % % % % % % % % % % % % % % % % % % % % % % % % % % % % % % % % % % %
%
%   first characterization of critical field
%
% % % % % % % % % % % % % % % % % % % % % % % % % % % % % % % % % % % % % % % % % % % % % % % % % % % %

\section{First characterization of $H_{c_1}$: proof of Theorem \ref{t3}}\label{section4}

In this section we give the proof of Theorem \ref{t3}, which provides a characterization for triviality of the limiting vorticity measure. The proof adapts that for Theorem 3 in \cite{BJOS2} to account for the highly anisotropic features in our problem. We will need the following lemma.

\begin{lemma}\label{l4}
	If $\vec B\in\mathcal{C}$, then $\supp(\Cl\vec B)\subset \overline D$, and $(\Cl \vec B)|_{D}\in N^{\perp}$, where $N:=\{\vec C\in L^2(D;\R^3):\cl\hat C=0\}$. Conversely, for any $\vec{\psi}\in N^{\perp}$, there exists $\vec{B}_{\vec\psi}\in\mathcal{C}$ such that $\Cl\vec{B}_{\vec{\psi}}=\vec{\psi}\mathbb{1}_{D}$.
\end{lemma}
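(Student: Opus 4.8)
The plan is to prove the two implications separately; each reduces to an integration-by-parts identity together with an approximation argument, and the anisotropic (two-dimensional) curl is what makes the approximation the delicate ingredient.

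\textbf{First direction.} Suppose $\vec B\in\mathcal C$. To get $\supp(\Cl\vec B)\subset\overline D$ I would test the defining identity of $\mathcal C$ against $\vec\phi\in C^\infty_c(\R^3\setminus\overline D;\R^3)$: any such $\vec\phi$ has $\cl\hat\phi=0$ in $D$, so $0=\int_{\R^3}\vec B\cdot(\Cl\vec\phi)\,dx=\int_{\R^3}(\Cl\vec B)\cdot\vec\phi\,dx$ after an integration by parts (legitimate since $\vec B\in H^1$), whence $\Cl\vec B=0$ on $\R^3\setminus\overline D$. For $(\Cl\vec B)|_D\in N^\perp$ the crux is the density of $\{\vec\phi|_D:\vec\phi\in H^1(\R^3;\R^3),\ \cl\hat\phi=0\text{ in }D\}$ in $N$ inside $L^2(D;\R^3)$: granting it, for $\vec C\in N$ with approximants $\vec\phi_k$ one has $\langle(\Cl\vec B)|_D,\vec C\rangle_{L^2(D)}=\lim_k\int_{\R^3}(\Cl\vec B)\cdot\vec\phi_k\,dx=\lim_k\int_{\R^3}\vec B\cdot(\Cl\vec\phi_k)\,dx=0$, the first equality combining $\vec\phi_k|_D\to\vec C$ in $L^2(D)$ with $\supp(\Cl\vec B)\subset\overline D$ and the last using the definition of $\mathcal C$. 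To produce the approximants I would write $\vec C=(\hat C,C^3)$: the component $C^3$ is unconstrained in $N$, so it is handled by a $C^\infty_c(D)$ approximation extended by zero; for $\hat C$ (which has $\cl\hat C=0$ in $D$) I would run the construction in the proof of Lemma \ref{l7} — the inward planar diffeomorphism $\Phi_\ep$, the $x_3$-freezing extension $P_\ep$, and a mollification at scale $\sigma(\ep)$ — now observing that every one of these operations preserves $\cl$-freeness \emph{exactly} (the two-dimensional curl ignores $x_3$, so $P_\ep$ keeps it zero; the pullback by $\hat\Psi_\ep$ carries a slicewise-closed $1$-form to a slicewise-closed one; mollification commutes with $\cl$). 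Hence the resulting $\hat\phi_\ep\in C^\infty_c(\R^3;\R^2)$ satisfies $\cl\hat\phi_\ep=0$ in $D$ and $\hat\phi_\ep|_D\to\hat C$ in $L^2(D)$ exactly as in Lemma \ref{l7}, and $\vec\phi_\ep:=(\hat\phi_\ep,\phi^3_\ep)\in C^\infty_c(\R^3;\R^3)$ does the job.

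\textbf{Second direction.} Given $\vec\psi\in N^\perp$, I would first identify $N^\perp$. For $\hat C\in L^2(D;\R^2)$ the condition $\cl\hat C=0$ in $\mathcal D'(D)$ is equivalent to $\int_D\hat C\cdot\hat\nabla^\perp\mu\,dx=0$ for all $\mu\in H^1_0(D)$, so $N=S^\perp$ with $S:=\{(\hat\nabla^\perp\mu,0):\mu\in H^1_0(D)\}$ and therefore $N^\perp=\overline S$ in $L^2(D;\R^3)$. Pick $\mu_k\in H^1_0(D)$ with $(\hat\nabla^\perp\mu_k,0)\to\vec\psi$ in $L^2(D;\R^3)$; in particular $\psi^3=0$, and extending each $\mu_k$ by zero to $\tilde\mu_k\in H^1(\R^3)$ one has $\vec\psi\,\mathbb{1}_D=\lim_k\Cl(0,0,-\tilde\mu_k)$ in $L^2(\R^3;\R^3)$, so $\nabla\cdot(\vec\psi\,\mathbb{1}_D)=0$ in $\R^3$ as a strong limit of divergence-free fields. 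Lemma \ref{l8} then yields a unique $\vec u\in\h\cap L^6(\R^3;\R^3)$ with $\Cl\vec u=\vec\psi\,\mathbb{1}_D$ and $\nabla\cdot\vec u=0$; since $\vec\psi\,\mathbb{1}_D$ is compactly supported, $\vec u$ coincides with its Biot--Savart potential (by the uniqueness in Lemma \ref{l8}) and hence decays like $|x|^{-2}$, giving $\vec u\in L^2(\R^3;\R^3)$ and so $\vec u\in H^1(\R^3;\R^3)$; a second application of Lemma \ref{l8} gives $\vec u\in\Cl\h$. Finally, for every $\vec\phi\in H^1(\R^3;\R^3)$ with $\cl\hat\phi=0$ in $D$, integration by parts gives $\int_{\R^3}\vec u\cdot(\Cl\vec\phi)\,dx=\int_{\R^3}(\Cl\vec u)\cdot\vec\phi\,dx=\int_D\vec\psi\cdot\vec\phi\,dx=0$ because $\vec\phi|_D\in N$ while $\vec\psi\in N^\perp$; thus $\vec u\in\mathcal C$ and we set $\vec B_{\vec\psi}:=\vec u$.

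\textbf{Main obstacle.} The integration-by-parts identities and the reduction $N^\perp=\overline S$ are routine; the substantive difficulty is the density statement in the first direction, where one must approximate a merely $L^2$, slicewise-curl-free field on the Lipschitz cylinder $D$ by smooth fields that stay \emph{exactly} curl-free on all of $D$, in particular along the lateral boundary $\partial\Omega\times(0,L)$. This is precisely why the diffeomorphism-plus-extension-plus-mollification machinery of Lemma \ref{l7} is needed, together with the verification that each step preserves the two-dimensional curl. A secondary technical nuisance is the decay and $H^1$-regularity of the Biot--Savart potential in the second direction and the justification of the integration by parts at infinity, which one obtains along a sequence of radii using $\vec\phi\in L^2$ and the $|x|^{-2}$ decay of $\vec u$.
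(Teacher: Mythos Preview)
Your proposal is correct, and the first direction matches the paper's proof almost exactly: the paper packages the density of smooth slicewise-curl-free fields in $N$ as a separate Lemma (Lemma~\ref{l10}, built on the Lemma~\ref{l7} machinery), precisely the construction you describe. Your route to $\supp(\Cl\vec B)\subset\overline D$ by testing against $C^\infty_c(\R^3\setminus\overline D;\R^3)$ is slightly more direct than the paper's (which first extends the orthogonality to $L^2$ test functions, then tests against $\chi\vec B$), but equivalent.

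The second direction differs in detail. The paper characterizes $N^\perp$ via a separate Lemma~\ref{l11} as $\{\vec C\in L^2(D;\R^3):\nabla\cdot\vec C=0,\ C^3=0,\ \vec C\cdot\vec n=0\}$, then sets $\vec\chi=\Gamma_3\ast(\vec\psi\,\mathbb{1}_D)$ and $\vec B_{\vec\psi}=\Cl\vec\chi$, invoking Calder\'on--Zygmund (GT Theorem~9.9) for the $H^1$ regularity of $\vec B_{\vec\psi}$. Your route --- identifying $N^\perp=\overline{\{(\hat\nabla^\perp\mu,0):\mu\in H^1_0(D)\}}$, obtaining $\nabla\cdot(\vec\psi\,\mathbb{1}_D)=0$ as a strong limit of curls, then invoking Lemma~\ref{l8} and bootstrapping to $H^1$ via the $|x|^{-2}$ Biot--Savart decay --- lands on the same object (your $\vec u$ is exactly $\Cl(\Gamma_3\ast(\vec\psi\,\mathbb{1}_D))$, by the uniqueness in Lemma~\ref{l8}). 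Your dual description of $N^\perp$ is more economical for the divergence-free extension, while the paper's is more explicit but costs an extra lemma; your final verification that $\vec u\in\mathcal C$ via a single integration by parts between two $H^1$ fields is also tidier than the paper's, which routes through another approximation by the Lemma~\ref{l7}/\ref{l10} sequence.
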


We need a couple of auxiliary lemmas. The first is an approximation lemma.

\begin{lemma}\label{l10}
The set $C^{\infty}(\overline{D})\cap N$ is dense in $N$ with respect to the $L^2$ norm.
\end{lemma}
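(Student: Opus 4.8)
The plan is to recycle the extension–diffeomorphism–mollification construction from the proof of Lemma~\ref{l7}, the only extra thing to verify being that each of the three operations involved leaves the horizontal constraint $\cl\hat C=0$ intact. First I would peel off the third component: the constraint defining $N$ involves only $\hat C=(C^1,C^2)$, and $C^\infty(\overline D)$ is dense in $L^2(D)$, so $C^3$ may be approximated separately and it suffices to treat $\vec C=(\hat C,0)$ with $\cl\hat C=0$ in $\mathcal D'(D)$. Testing this identity against products $\varphi(\hat x)\psi(x_3)$, with $\varphi$ ranging over a countable $L^2$-dense subset of $C^\infty_c(\Omega)$, and using Fubini shows that $\cl\hat C(\cdot,x_3)=0$ in $\mathcal D'(\Omega)$ for a.e.\ $x_3\in(0,L)$. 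This lets me pick the good slices $s_1^\ep\in(0,\ep)$ and $s_2^\ep\in(L-\ep,L)$ appearing in the definition of $P_\ep$ so that, in addition to $\hat C(\cdot,s_j^\ep)\in L^2(\Omega)$, one has $\cl\hat C(\cdot,s_j^\ep)=0$ for $j=1,2$.

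With these slices I would form $P_\ep(\hat C)$ exactly as in \eqref{eqa10}, extending $\hat C$ by zero outside $D$. Since $P_\ep$ only freezes or translates the $x_3$-variable, every horizontal slice of $P_\ep(\hat C)$ is an $x_3$-translate of a slice of $\hat C$, hence curl-free in $\hat x$ (the good-slice choice being what handles the two frozen slabs $(-\ep,s_1^\ep)$ and $(s_2^\ep,L+\ep)$); as the $2$d curl carries no $x_3$-derivative, the horizontal jumps of $P_\ep(\hat C)$ contribute nothing, so $\cl P_\ep(\hat C)$ vanishes away from $\partial\Omega\times(-\ep,L+\ep)$. Next I apply the pullback $\hat\Phi_\ep^*$. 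For $x_3$ in a neighbourhood of $[0,L]$ one has $\eta\equiv1$, so there $\Phi_\ep(x)=(\hat\Psi_\ep(\hat x),x_3)$ and $\hat\Phi_\ep^*$ acts slicewise as the $2$d pullback by the planar diffeomorphism $\hat\Psi_\ep$; since the pullback of a closed $1$-form by a diffeomorphism is closed, $\cl\bigl(\hat\Phi_\ep^*(P_\ep(\hat C))\bigr)=0$ wherever $\hat\Psi_\ep$ avoids $\partial\Omega$, and the inclusion $\hat\Psi_\ep(\Omega_\ep)\subset\Omega$ means precisely that $\hat\Psi_\ep$ carries a fixed neighbourhood of $\overline\Omega$ into the open set $\Omega$ — so the zero-extension jump across $\partial\Omega$, which would otherwise violate the constraint, has been pushed away from $\overline D$. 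Hence $\hat\Phi_\ep^*(P_\ep(\hat C))$ is an $L^2$ field that is curl-free on a full neighbourhood of $\overline D$. Finally, mollification with $\rho_{\sigma(\ep)}$ commutes with the constant-coefficient operator $\cl$, so for $\sigma(\ep)>0$ small enough that the $\sigma(\ep)$-neighbourhood of $\overline D$ stays inside the curl-free region, $\hat C_\ep:=\rho_{\sigma(\ep)}\ast\hat\Phi_\ep^*(P_\ep(\hat C))\in C^\infty(\overline D;\R^2)$ satisfies $\cl\hat C_\ep=0$ on $\overline D$; taking $\vec C_\ep:=(\hat C_\ep,0)$ together with the smooth approximant of $C^3$ produces elements of $C^\infty(\overline D)\cap N$.

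The convergence $\hat C_\ep|_D\to\hat C$ in $L^2(D;\R^2)$ is then word for word the argument already carried out for $\hat\phi_\ep\to\hat\phi$ in the proof of Lemma~\ref{l7}: the triangle-inequality split \eqref{eq15}, continuity of translations to handle $P_\ep$, dominated convergence for the diffeomorphism term using $\lVert\hat D\hat\Phi_\ep\rVert_\infty\le C$, and smallness of $\sigma(\ep)$ for the mollification term; I would simply invoke it. The one genuine point requiring care — and the main obstacle — is the preservation of $\cl\hat C=0$ under the pullback: one must observe that near $\overline D$ the $3$d diffeomorphism $\Phi_\ep$ degenerates to the product form $(\hat\Psi_\ep(\hat x),x_3)$, whose pullback on horizontal $1$-forms is slicewise closed-form-preserving, and that $\hat\Psi_\ep(\Omega_\ep)\subset\Omega$ keeps the boundary jump of the zero-extension out of a neighbourhood of $\overline D$.
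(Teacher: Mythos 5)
Your proposal is correct and follows essentially the same route as the paper: both recycle the extension--pullback--mollification construction of Lemma~\ref{l7} together with its $L^2$ convergence argument, the only difference being that you verify preservation of the constraint $\cl\hat\phi=0$ directly (a.e.\ curl-free slices for $P_\ep$, slicewise planar pullback preserving closedness, mollification commuting with $\cl$), whereas the paper performs the same verification by testing $\cl\hat\phi_\ep$ against $\varphi\in C^1_c(D)$ and reusing the change-of-variables computation (\ref{eq16})--(\ref{eq18}) to obtain the identity \eqref{eq22}. Your explicit choice of slices $s_j^\ep$ with $\cl\hat C(\cdot,s_j^\ep)=0$ makes precise a point the paper leaves implicit when it asserts that $\cl P_\ep(\hat\phi)=0$ in $D_\ep$.
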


\begin{proof}
Given $\vec{\phi}\in N$, we approximate $\vec\phi$ by smooth vector fields in a similar way as in the proof of Lemma \ref{l7}. Namely, for $\ep>0$ sufficiently small, let $\Phi_{\ep}(x):\R^3\rightarrow \R^3$ be the diffeomorphism defined in the proof of Lemma \ref{l7}. In particular, it satisfies the properties (\ref{eq13})-(\ref{eqa11}). Further, let $P_{\ep}(\hat\phi)$ be defined by (\ref{eqa10}), and denote by $\hat\Phi_{\ep}^*(P_{\ep}(\hat{\phi}))$ the pullback of $P_{\ep}(\hat{\phi})$ under $\hat\Phi_{\ep}$. Finally define $\hat{\phi}_{\ep}:=\rho_{\sigma(\ep)}\ast\hat\Phi_{\ep}^*(P_{\ep}(\hat{\phi}))$ and $\phi_{\ep}^3=\rho_{\sigma(\ep)}\ast\phi^3$ in $D$. Then we have $\vec{\phi}_{\ep} \rightarrow \vec{\phi}$ in $L^2(D;\R^3)$ as can be seen from the proof of Lemma \ref{l7}. Given $\varphi\in C^1_{c}(D)$, similar to the calculations in the proof of Lemma \ref{l7}, specifically, the calculations performed in (\ref{eq16})-(\ref{eq18}), we have
	\begin{equation}\label{eq22}
	\int_{D}\cl\hat{\phi}_{\ep}\,\varphi\, dx=-\int_{\R^3}P_{\ep}(\hat\phi)\cdot\hat\nabla^{\perp}_y \varphi_{\sigma}\,dy
	\end{equation} 
	for $y=\Phi_{\ep}(x)$ and $\varphi_{\sigma}:=\rho_{\sigma(\ep)}\ast\varphi$. As in the proof of Lemma \ref{l7}, $\varphi_{\sigma}$ has compact support in $D_{\ep}=\Omega\times(-\ep,L+\ep)$. Since $\cl\hat{\phi}=0$ in $D$, the construction of $P_{\ep}(\hat\phi)$ implies that $\cl P_{\ep}(\hat\phi)=0$ in $D_{\ep}$. We conclude from (\ref{eq22}) that
	$\int_{D}\cl\hat{\phi}_{\ep}\,\varphi\, dx = 0$
	for all $\varphi\in C^{\infty}_c(D)$ and thus $\cl\hat{\phi}_{\ep}=0$ in $D$. In particular, the sequence $\{\vec{\phi}_{\ep}\}\subset C^{\infty}(\overline{D})\cap N$ and converges to $\vec{\phi}$ in $L^2(D;\R^3)$.
\end{proof}

Next we give a characterization of the space $N^{\perp}$.

\begin{lemma}\label{l11}
We have
\begin{equation}\label{eq25}
N^{\perp}=\lt\{\vec{C}\in L^2(D;\R^3): \nabla\cdot\vec{C}=0, C^3=0, \vec{C}\cdot\vec{n}=0 \text{ in } H^{-\frac{1}{2}}(\partial D)\rt\}.
\end{equation}
\end{lemma}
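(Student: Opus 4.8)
The plan is to prove the two inclusions in \eqref{eq25} separately, using Lemma \ref{l10} to reduce computations to smooth vector fields, and using the integration-by-parts characterization of $N$ in terms of its orthogonal complement.

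\textbf{The inclusion $\supseteq$.} First I would take $\vec C\in L^2(D;\R^3)$ with $\nabla\cdot\vec C=0$, $C^3=0$ and $\vec C\cdot\vec n=0$ in $H^{-1/2}(\partial D)$, and show $\vec C\perp N$. Given $\vec\phi\in N$, by Lemma \ref{l10} it suffices to assume $\vec\phi\in C^{\infty}(\overline D)\cap N$; then $\cl\hat\phi=0$ in $D$, so on each horizontal slice the two-dimensional field $\hat\phi(\cdot,x_3)$ is curl-free, and hence (since slices of $D$ are simply connected, as $\Omega$ is simply connected) $\hat\phi(\cdot,x_3)=\hat\nabla_{\hat x}g(\cdot,x_3)$ for some scalar potential $g$ depending smoothly on all variables. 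I would then compute
\begin{equation*}
\int_D \vec C\cdot\vec\phi\,dx = \int_D \hat C\cdot\hat\nabla g\,dx = \int_D \lt(\nabla\cdot\vec C\rt)(-g) + \text{boundary terms},
\end{equation*}
using that $C^3=0$ so $\vec C\cdot\nabla g=\hat C\cdot\hat\nabla g$ and $\nabla\cdot\vec C=\hat\nabla\cdot\hat C$; the volume term vanishes because $\nabla\cdot\vec C=0$, and the boundary term over $\partial D$ reduces to $\int_{\partial D}(\vec C\cdot\vec n)g$, which vanishes because $\vec C\cdot\vec n=0$ in $H^{-1/2}(\partial D)$ — noting that on the top and bottom faces $\vec n=\pm\vec e_3$ so $\vec C\cdot\vec n=\pm C^3=0$ automatically, and on the lateral boundary $\hat C\cdot\hat n=\vec C\cdot\vec n=0$. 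Hence $\vec C\in N^{\perp}$.

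\textbf{The inclusion $\subseteq$.} Conversely, let $\vec C\in N^{\perp}$. Taking test fields $\vec\phi=(0,0,\phi^3)$ with $\phi^3\in C^{\infty}_c(D)$, we have $\cl\hat\phi=\cl(0,0)=0$, so such $\vec\phi\in N$; orthogonality gives $\int_D C^3\phi^3\,dx=0$ for all such $\phi^3$, whence $C^3=0$. Next, for $\vec\phi=\hat\nabla g$ with $g\in C^{\infty}_c(D)$ (so $\vec\phi=(\partial_1 g,\partial_2 g,0)\in N$ since $\cl\hat\nabla g=0$), orthogonality yields $\int_D \hat C\cdot\hat\nabla g\,dx=0$ for all $g\in C^{\infty}_c(D)$, which is precisely $\hat\nabla\cdot\hat C=0$ in $\mathcal D'(D)$; combined with $C^3=0$ this gives $\nabla\cdot\vec C=0$ in $\mathcal D'(D)$. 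With $\nabla\cdot\vec C=0$ in $L^2$, the normal trace $\vec C\cdot\vec n$ is well-defined in $H^{-1/2}(\partial D)$, and testing against $g\in C^{\infty}(\overline D)$ (extending the above to non-compactly-supported $g$, noting $\hat\nabla g\in N$) the Gauss-Green formula $\int_D \hat C\cdot\hat\nabla g = \langle \vec C\cdot\vec n, g\rangle_{\partial D}$ forces $\vec C\cdot\vec n=0$ in $H^{-1/2}(\partial D)$.

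\textbf{Main obstacle.} The delicate point is the use of the scalar potential representation $\hat\phi=\hat\nabla g$ for curl-free fields on slices and the care needed with traces and the Gauss-Green formula on the cylinder $D$, whose boundary has edges where the lateral surface meets the top and bottom. I expect the cleanest route is to avoid slice-by-slice potentials where possible and instead argue directly: for the $\supseteq$ direction, approximate a general $\vec\phi\in C^\infty(\overline D)\cap N$ and integrate by parts treating $D$ as a Lipschitz domain, using that the distributional statement $\cl\hat\phi=0$ on $D$ together with simple-connectedness of $\Omega$ gives a global potential $g\in H^1$ (smooth if $\vec\phi$ is), so the edge irregularity of $\partial D$ does not actually obstruct the Gauss-Green identity, which holds for Lipschitz domains. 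The genuine subtlety is ensuring the normal-trace pairing $\langle\vec C\cdot\vec n,g\rangle$ is interpreted correctly in $H^{-1/2}(\partial D)\times H^{1/2}(\partial D)$ and that density of smooth functions justifies passing between compactly supported and general test functions; this is where I would be most careful.
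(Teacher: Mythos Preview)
Your proof is correct, and the $\subseteq$ direction matches the paper's argument almost exactly (the paper uses the full gradient $\nabla\varphi$ as test field rather than $(\hat\nabla g,0)$, but this is immaterial). For the $\supseteq$ direction you take a genuinely different route. The paper approximates \emph{both} $\vec C$ and $\vec\phi$ by smooth fields lying in their respective constraint sets, then invokes two-dimensional Hodge orthogonality on each horizontal slice (divergence-free with zero normal trace versus curl-free) and integrates in $x_3$. You instead approximate only $\vec\phi\in N$ via Lemma~\ref{l10}, write the smooth $\hat\phi$ as $\hat\nabla g$ using simple-connectedness of $\Omega$, and apply the $H(\mathrm{div})$ Green's formula directly to the non-smooth $\vec C$. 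Your approach buys you one concrete advantage: it avoids having to produce smooth approximants of $\vec C$ that simultaneously preserve $\nabla\cdot\vec C=0$, $C^3=0$, and $\vec C\cdot\vec n=0$ on $\partial D$ --- a density claim the paper asserts but does not prove. The paper's approach, on the other hand, makes the slice-wise structure more visible and does not require constructing the global potential $g$ (whose smooth dependence on $x_3$ is clear but needs a line to justify). Both are sound; yours is marginally more self-contained.
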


\begin{proof}
We first show that, given $\vec{\phi}\in N^{\perp}$, it belongs to the space on the right hand side in (\ref{eq25}). To this end, first note that any $\vec{\chi}=(0,0,\chi^3)$ with $\chi^3\in L^2(D)$ belongs to $N$. Therefore, using arguments similar to those in the proof of Corollary \ref{c3}, it follows immediately that $\phi^3=0$.

To see that $\nabla\cdot\vec{\phi}=0$ in the sense of distributions, take any test function $\varphi \in C^{\infty}_c (D)$. It is clear that $\nabla\varphi\in N$. Therefore we have
\begin{equation*}
\int_{D} \vec{\phi}\cdot \nabla\varphi \,dx = 0 \qd\text{ for all } \varphi\in C^{\infty}_c(D),
\end{equation*}
which reads as $\nabla\cdot\vec{\phi}=0$ in the sense of distributions. Now for any $\varphi\in H^1(D)$, by Green's formula (see, e.g., equation (2.17) in \cite{GR}), we have
\begin{equation*}
\int_{D}\vec\phi\cdot\nabla\varphi\,dx + \int_{D}\lt(\nabla\cdot\vec\phi\rt)\varphi\,dx=\left\langle\vec\phi\cdot\vec n,\varphi \right\rangle_{\partial D}, 
\end{equation*}
where the right hand side in the above equation denotes the duality between $H^{-\frac{1}{2}}$ and $H^{\frac{1}{2}}$ on $\partial D$. It is clear that $\nabla\varphi\in N$ and thus $\int_{D}\vec\phi\cdot\nabla\varphi\,dx=0$. It follows that $\left\langle\vec\phi\cdot\vec n,\varphi \right\rangle_{\partial D}=0$ for all $\varphi|\in H^{1}(D)$. As the trace operator is onto $H^{\frac{1}{2}}(\partial D)$, we deduce that $\vec\phi\cdot\vec n=0$ in $H^{-\frac{1}{2}}(\partial D)$. This completes the proof of the inclusion 
\begin{equation*}
N^{\perp}\subset\lt\{\vec{C}\in L^2(D;\R^3): \nabla\cdot\vec{C}=0, C^3=0, \vec{C}\cdot\vec{n}=0 \text{ in } H^{-\frac{1}{2}}(\partial D)\rt\}.
\end{equation*}

To see the reverse inclusion, let $\vec{\phi}\in L^2(D;\R^3)$ satisfy $\nabla\cdot\vec{\phi}=0$, $\phi^3=0$, and $\vec{\phi}\cdot\vec{n}=0$ in $H^{-\frac{1}{2}}(\partial D)$ and $\vec{\chi}\in N$. Let $\{\vec{\phi}_{\ep}\}\subset C^{\infty}(\overline{D})\cap \{\vec{C}\in L^2(D;\R^3): \nabla\cdot\vec{C}=0, C^3=0, \vec{C}\cdot\vec{n}=0 \text{ in } H^{-\frac{1}{2}}(\partial D)\}$  and  $\{\vec\chi_{\ep}\}\subset C^{\infty}(\overline{D})\cap N$ be sequences that converge to $\vec{\phi}$ and $\vec{\chi}$ in $L^2(D)$, respectively. For all $x_3\in(0,L)$, as $\hat\nabla\cdot\hat{\phi}_{\ep}=0$, $\hat\phi_{\ep}\cdot\hat{n}=0$ on $\partial\Omega$, and $\cl\hat{\chi}_{\ep}=0$, by standard Hodge decomposition, we have that
\begin{equation*}
\int_{\Omega} \hat\phi_{\ep}(\hat x,x_3)\cdot \hat\chi_{\ep}(\hat x, x_3)\, d\hat x = 0.
\end{equation*}
It follows that
\begin{equation*}
\int_{D} \vec{\phi}\cdot \vec{\chi}\,dx = \lim_{\ep\rightarrow 0} \int_{D} \vec{\phi}_{\ep}\cdot \vec{\chi}_{\ep}\,dx = \lim_{\ep\rightarrow 0} \int_{0}^{L}\int_{\Omega}\hat\phi_{\ep}(\hat x,x_3)\cdot \hat\chi_{\ep}(\hat x, x_3)\, d\hat x\, dx_3 = 0
\end{equation*}
and hence $\vec{\phi}\in N^{\perp}$. 
\end{proof}

\begin{proof}[Proof of Lemma \ref{l4}]
	Since $N$ is a closed subspace of $L^2(D;\R^3)$, we have the decomposition $L^2(D;\R^3)=N\oplus N^{\perp}$. We first show that
	\begin{equation}\label{l4.1}
	\int_{\R^3}\lt(\Cl\vec B\rt)\cdot\vec{\phi}\,dx = 0 \text{ for all } \vec{\phi}\in L^2(\R^3;\R^3) \text{ s.t. } \cl\hat{\phi}=0 \text{ in } D.
	\end{equation}
	Given $\vec{\phi}$ as above, let $\vec{\phi}_{\ep}$ be as in the proof of Lemma \ref{l10}. Then we have $\vec{\phi}_{\ep}\rightarrow \vec{\phi}$ in $L^2(\R^3;\R^3)$ and $\cl\hat{\phi}_{\ep}=0$ in $D$. Hence by definition of the space $\mathcal{C}$ we have
	\begin{equation*}
	\int_{\R^3}\lt(\Cl\vec B\rt)\cdot\vec{\phi}\,dx=\lim_{\ep\rightarrow 0}\int_{\R^3}\lt(\Cl\vec B\rt)\cdot\vec{\phi}_{\ep}\, dx=0.
	\end{equation*}
	This proves \eqref{l4.1}. 
	
	To see that $\supp(\Cl\vec B)\subset \overline D$, we take any nonnegative $\chi\in C^{\infty}_c(\R^3\setminus\overline{D})$. Then $\chi\vec{B} \in L^2(\R^3;\R^3)$ and $\cl(\chi\hat{B})=0$ in $D$. It follows from (\ref{l4.1}) that
	\begin{equation*}
	\int_{\R^3} \chi \lt|\Cl\vec{B}\rt|^2 \, dx =0
	\end{equation*}
	for all nonnegative $\chi\in C^{\infty}_c(\R^3\setminus\overline{D})$, and hence we have $\supp(\Cl\vec{B})\subset \overline{D}$. Now for any $\vec{\phi}\in N$, it follows from (\ref{l4.1}) that
	\begin{equation*}
	0=\int_{\R^3} \lt(\Cl\vec{B}\rt) \cdot \vec{\phi}\,\mathbb{1}_{D} \, dx = \int_{D} \lt(\Cl\vec{B}\rt) \cdot \vec{\phi} \, dx,
	\end{equation*}
	and hence $(\Cl\vec{B})|_{D}\in N^{\perp}$.
	
	Given $\vec{\psi}\in N^{\perp}$, let $\vec{\chi}:= \Gamma_3\ast\lt(\vec{\psi}\mathbb{1}_D\rt)$ where $\Gamma_3$ is the fundamental solution for the Laplacian in $\R^3$ and define $\vec{B}_{\vec{\psi}}:= \Cl\vec{\chi}$. By Theorem 9.9 in \cite{GT}, $\vec\chi\in H^2(\R^3;\R^3)$ and thus $\vec{B}_{\vec{\psi}}\in H^1(\R^3;\R^3)\cap \Cl H^1(\R^3;\R^3)$.  By Lemma \ref{l11}, as $\nabla\cdot\vec\psi=0$ in $D$ and $\vec\psi\cdot\vec n=0$ on $\partial D$, we have $\nabla\cdot\lt(\vec\psi\mathbb{1}_{D}\rt)=0$ in $\R^3$. Thus $\nabla\cdot\vec{\chi}=0$, and it follows that $\Cl\vec{B}_{\vec{\psi}} = -\Delta\vec{\chi}=\vec{\psi}\mathbb{1}_D$ and $(\Cl\vec{B}_{\vec{\psi}})|_D = \vec{\psi} \in N^{\perp}$. For any $\vec\phi\in H^1(\R^3;\R^3)$ with $\cl\hat\phi=0$ in $D$, by virtue of the proof of Lemmas \ref{l7} and \ref{l10}, we can find a sequence $\{\vec\phi_{\ep}\}\subset C^{\infty}_{0}(\R^3;\R^3)$ such that $\vec\phi_{\ep}\rightharpoonup\vec\phi$ in $H^1(\R^3;\R^3)$ and $\cl\hat\phi_{\ep}=0$ in $D$. In particular, $\vec\phi_{\ep}|_{D}\in N$. Hence we have
	\begin{equation*}
	0=\int_{D}(\Cl\vec B_{\vec\psi})\cdot\vec\phi_{\ep}\,dx = \int_{\R^3}(\Cl\vec B_{\vec\psi})\cdot\vec\phi_{\ep}\,dx=\int_{\R^3}\vec B_{\vec\psi}\cdot(\Cl\vec\phi_{\ep})\,dx
	\end{equation*}
	for all $\ep$, and thus
	\begin{equation*}
	\int_{\R^3}\vec B_{\vec\psi}\cdot\lt(\Cl\vec\phi\rt)dx = \lim_{\ep\rightarrow 0}\int_{\R^3}\vec B_{\vec\psi}\cdot\lt(\Cl\vec\phi_{\ep}\rt)dx=0
	\end{equation*}
	for all $\vec\phi\in H^1(\R^3;\R^3)$ with $\cl\hat\phi=0$ in $D$. This shows that $\vec B_{\vec\psi}\in\mathcal{C}$.
\end{proof}

Now we give the proof of Theorem \ref{t3}.

\begin{proof}[Proof of Theorem \ref{t3}]
	Let us first assume $\vec B_*=\vec B_0$ and we need to show that $\cl \hat v_0=0$. Note that $\nabla\cdot\vec{B}_0 = 0$ a.e. as $\vec{B}_0=\Cl(\vec{A}_0-\vec{a})$, and therefore $\Cl\Cl\vec{B}_0 = -\Delta\vec{B}_0$ in the weak sense. By \eqref{t1.2}, we have
	\begin{equation*}
	\cl \hat v_0=-\Delta B_0^3+B_0^3+1 \text{ in } D
	\end{equation*}
	in the sense of distributions. So it suffices to show that 
	\begin{equation}\label{eq50}
	-\Delta B_*^3+B_*^3+1=0 \text{ in } D.
	\end{equation} 
	By direct variation of $\mathcal{E}_0$ in the set $\mathcal{C}$, we obtain
	\begin{equation}\label{t31}
	\int_{\R^3}\vec{B}_*\cdot\vec{B}\,dx+\int_{D}\lt(\Cl\vec{B}_*+\vec{a}\rt)\cdot\lt(\Cl\vec{B}\rt)dx=0 \text{ for all } \vec{B}\in\mathcal{C}.
	\end{equation}
	Since $\nabla\cdot\vec{B}_*=0$, there exists $\vec\psi\in H^2(\R^3;\R^3)$ such that $\Cl\vec{\psi}=\vec{B}_*$. One can choose $\vec\psi=(-\Delta)^{-1}(\Cl\vec B_*)$ and standard elliptic regularity implies $\vec\psi\in H^2(\R^3;\R^3)$. It follows from \eqref{t31} that
	\begin{equation*}
	\begin{split}
	0&=\int_{\R^3}\lt(\Cl\vec{\psi}\rt)\cdot\vec{B}\,dx+\int_{D}\lt(\Cl\vec{B}_*+\vec{a}\rt)\cdot\lt(\Cl\vec{B}\rt)dx\\
	&=\int_{\R^3}\vec{\psi}\cdot\lt(\Cl\vec{B}\rt)dx+\int_{D}\lt(\Cl\vec{B}_*+\vec{a}\rt)\cdot\lt(\Cl\vec{B}\rt)dx\\
	&=\int_{D}\lt(\vec{\psi}+\Cl\vec{B}_*+\vec{a}\rt)\cdot\lt(\Cl\vec{B}\rt)dx
	\end{split}
	\end{equation*}
	for all $\vec{B}\in\mathcal{C}$. We deduce from Lemma \ref{l4} that $\vec{\psi}+\Cl\vec{B}_*+\vec{a}\in (N^{\perp})^{\perp}=N$, and therefore the $x_3$-component of $\Cl\lt(\vec{\psi}+\Cl\vec{B}_*+\vec{a}\rt)$ equals zero, which is exactly (\ref{eq50}) as desired.
	
	Next assume $\cl\hat v_0=0$ and we show $\vec{B}_*=\vec{B}_0$. It suffices to show
	\begin{equation}\label{t32}
	\mathcal{E}_0(\vec{B}_*)=\mathcal{E}_0(\vec{B}_0).
	\end{equation}
	By Lemma \ref{l4}, we have
	\begin{equation}\label{t33}
	\int_{D}\vec{v}_0\cdot\lt(\Cl\vec{B}\rt)dx=0 \text{ for all } \vec{B}\in\mathcal{C}.
	\end{equation}
	Plugging $\vec{B}_0$ in \eqref{t33} and using \eqref{t1.2}, we have
	\begin{equation}\label{eqa14}
	\begin{split}
	0&=\int_{D}\lt(\Cl\vec{B}_0+\vec{A}_0\rt)\cdot\lt(\Cl\vec{B}_0\rt)dx\\
	&=\int_{\R^3}\lt(\Cl\vec{B}_0+(\vec{A}_0-\vec a)+\vec a\rt)\cdot\lt(\Cl\vec{B}_0\rt)dx\\
	&=\int_{D}\lt|\Cl\vec{B}_0\rt|^2dx+\int_{\R^3}\lt(\Cl(\vec{A}_0-\vec a)\rt)\cdot\vec{B}_0\,dx+\int_{\R^3}\vec a\cdot \lt(\Cl\vec B_0\rt)\,dx\\
	&=\int_{D}\lt|\Cl\vec{B}_0\rt|^2dx+\int_{\R^3}\lt|\vec{B}_0\rt|^2dx+\int_{D}\vec{a}\cdot\lt(\Cl\vec{B}_0\rt)\,dx,
	\end{split}
	\end{equation}
	where the integration by parts can be easily justified by approximation by smooth functions. Using the above, one can rewrite
	\begin{equation}\label{t34}
	\begin{split}
	\mathcal{E}_0(\vec{B}_0)&=\frac{1}{2}\int_{\mb R^3}\lt|\vec B_0\rt|^2dx+\frac{1}{2}\int_{D}\lt|\nabla\times\vec B_0+\vec a\rt|^2\,dx\\
	&=\frac{1}{2}\int_{D}\lt(\vec{a}\cdot\lt(\Cl\vec{B}_0\rt)+|\vec a|^2\rt)dx.
	\end{split}
	\end{equation}
	Next, using $\vec{B}_*$ as a test function in \eqref{t31} and following similar calculations as above, we can rewrite
	\begin{equation}\label{t35}
	\mathcal{E}_0(\vec{B}_*)=\frac{1}{2}\int_{D}\lt(\vec{a}\cdot\lt(\Cl\vec{B}_*\rt)+|\vec a|^2\rt)dx.
	\end{equation}
	Using $\vec{B}_0$ as a test function in \eqref{t31}, we obtain
	\begin{equation}\label{t36}
	0=\int_{\R^3}\vec{B}_*\cdot\vec{B}_0\,dx+\int_{D}\lt(\Cl\vec{B}_*+\vec{a}\rt)\cdot\lt(\Cl\vec{B}_0\rt)dx.
	\end{equation}
	Plugging $\vec{B}_*$ in \eqref{t33} and using \eqref{t1.2} and exactly the same lines as in (\ref{eqa14}), we obtain
	\begin{equation}\label{t37}
	0=\int_{D}\lt(\Cl\vec B_0\rt)\cdot\lt(\Cl \vec B_*\rt)dx+\int_{\R^3}\vec{B}_0\cdot\vec{B}_*\,dx+\int_{D}\vec a\cdot\lt(\Cl\vec{B}_*\rt)dx.
	\end{equation}
	Comparing \eqref{t36} with \eqref{t37}, we see that
	\begin{equation*}
	\int_{D} \vec{a}\cdot\lt(\Cl\vec{B}_0\rt)dx = \int_{D} \vec{a}\cdot\lt(\Cl\vec{B}_*\rt)dx.
	\end{equation*}
	This together with (\ref{t34}) and (\ref{t35}) gives (\ref{t32}) and hence $\vec{B}_* = \vec{B}_0$. This completes the proof of Theorem \ref{t3}.
\end{proof}

% % % % % % % % % % % % % % % % % % % % % % % % % % % % % % % % % % % % % % % % % % % %
%
% more characterization
%
% % % % % % % % % % % % % % % % % % % % % % % % % % % % % % % % % % % % % % % % % % % %

\section{More explicit characterization of $H_{c_1}$: proof of Theorem \ref{t4}}\label{section5}

The proof of Theorem \ref{t4} requires some preparation. Recall from Theorem \ref{t3} that $\vec{B}_*$ denotes the unique minimizer of the energy functional $\mathcal{E}_0$ in the space $\mathcal{C}$. From Lemma \ref{l012}, there exists a unique $\vec{A}_*\in K_0$ such that $\Cl\lt(\vec{A}_*-\vec{a}\rt) = \vec{B}_*$. Then we have the following key lemma.

\begin{lemma}\label{l013}
	Let $(\hat v_0,\vec A_0)\in V\times K_0$ be a minimizer of $\mathcal{G}_{h_0}$ and denote $\vec{B}_0=\Cl(\vec A_0-\vec a)$. Let $\vec{B}_*$ be the unique minimizer of $\mathcal{E}_0$ in the space $\mathcal{C}$ and $\vec{A}_*\in K_0$ be the unique element satisfying $\Cl\lt(\vec{A}_*-\vec{a}\rt) = \vec{B}_*$. Further let $\hat{v}_* \in V$ be the unique minimizer of the functional
	\begin{equation}\label{ep11}
	F(\hat v;\vec A_{*}):= \frac{1}{2}\int_{D} \lt|\hat v - \hat A_*\rt|^2\, dx + \frac{1}{2h_0}\lt|\cl\hat v\rt|(D).
	\end{equation}
	Then we have $\cl \hat v_0 = 0$ if and only if $\cl \hat v_* = 0$.
\end{lemma}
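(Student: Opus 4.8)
The plan is to prove the two implications separately, reducing each to Theorem~\ref{t3} (which identifies $\cl\hat v_0=0$ with $\vec B_0=\vec B_*$ and with $\lVert\vec B_*\rVert_*\le\tfrac{1}{2h_0}$) and to Lemma~\ref{l012} (which characterizes $\vec A_*$ as the unique element of $K_0$ with $\Cl(\vec A_*-\vec a)=\vec B_*$). For the direction $\cl\hat v_0=0\Rightarrow\cl\hat v_*=0$, I would first invoke Theorem~\ref{t3} to get $\vec B_0=\vec B_*$; since $\vec A_0$ and $\vec A_*$ both lie in $K_0$ and satisfy $\Cl(\vec A-\vec a)=\vec B_*$, Lemma~\ref{l012} forces $\vec A_0=\vec A_*$. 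Because $\g(\hat v,\vec A_0)=F(\hat v;\vec A_0)+\tfrac12\lVert\Cl\vec A_0-\vec e_3\rVert_{L^2(\R^3)}^2$ for every $\hat v\in V$, minimality of the pair $(\hat v_0,\vec A_0)$ makes $\hat v_0$ a minimizer of $F(\cdot;\vec A_0)=F(\cdot;\vec A_*)$, and uniqueness of the minimizer of $F(\cdot;\vec A_*)$ gives $\hat v_0=\hat v_*$, hence $\cl\hat v_*=0$.

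\emph{The main direction is $\cl\hat v_*=0\Rightarrow\cl\hat v_0=0$.} Here it suffices to show that $\cl\hat v_*=0$ forces $\lVert\vec B_*\rVert_*\le\tfrac{1}{2h_0}$, after which Theorem~\ref{t3} closes the argument. Writing $\vec g:=\Cl\vec B_*$ with two-dimensional part $\hat g=(g^1,g^2)$, the heart of the matter is the pointwise identity $\hat v_*-\hat A_*=\hat g$ a.e.\ in $D$. To establish it I would argue as follows. From minimality of $\hat v_*$ for $F(\cdot;\vec A_*)$, evaluating $F$ at $\hat v_*+t\hat w$, using $\cl\hat v_*=0$ so that $|\cl(\hat v_*+t\hat w)|(D)=|t|\,|\cl\hat w|(D)$, and letting $t\to0^{\pm}$, one obtains
\[
\Big|\int_D(\hat v_*-\hat A_*)\cdot\hat w\,dx\Big|\le\frac{1}{2h_0}\,|\cl\hat w|(D)\qquad\text{for all }\hat w\in V;
\]
in particular $\hat v_*-\hat A_*$ is orthogonal in $L^2(D;\R^2)$ to every curl-free field. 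On the other hand $\vec B_*\in\mathcal C$, so Lemma~\ref{l4} gives $\vec g|_D\in N^\perp$ and hence $\hat g$ is likewise orthogonal in $L^2(D;\R^2)$ to every curl-free field. Finally, $\hat v_*-\hat A_*$ and $\hat g$ have the same scalar curl $-(B_*^3+1)$ in $D$: for the former this uses $\cl\hat v_*=0$ and $\cl\hat A_*=B_*^3+1$ (which comes from $\Cl\vec a=\vec e_3$), and for the latter it uses $(\Cl\vec g)^3=-\Delta B_*^3$ (valid since $\nabla\cdot\vec B_*=0$) together with the Euler--Lagrange relation $-\Delta B_*^3+B_*^3+1=0$ in $D$ obtained as \eqref{eq50} in the proof of Theorem~\ref{t3}. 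Thus $\hat v_*-\hat A_*-\hat g$ is a curl-free $L^2(D;\R^2)$ field orthogonal to the closed subspace of curl-free fields, so it vanishes.

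Granting this identity — and noting that $g^3=(\Cl\vec B_*)^3=0$, which follows for $\vec B_*\in\mathcal C$ by testing the defining condition of $\mathcal C$ against $\vec\phi=(0,0,\psi^3)$ with $\psi^3\in C^\infty_c(\R^3)$, exactly as in Corollary~\ref{c3} — one gets, for any $\vec\phi\in H^1(\R^3;\R^3)$, using integration by parts and $\supp(\Cl\vec B_*)\subset\overline D$,
\[
\int_{\R^3}\vec B_*\cdot(\Cl\vec\phi)\,dx=\int_D\vec g\cdot\vec\phi\,dx=\int_D\hat g\cdot\hat\phi\,dx=\int_D(\hat v_*-\hat A_*)\cdot\hat\phi\,dx\le\frac{1}{2h_0}\int_D|\cl\hat\phi|\,dx,
\]
where the last inequality is the variational inequality above applied to $\hat w=\hat\phi|_D\in V$. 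Taking the supremum over all $\vec\phi$ with $\int_D|\cl\hat\phi|\,dx\le1$ yields $\lVert\vec B_*\rVert_*\le\tfrac{1}{2h_0}$, and then Theorem~\ref{t3} gives $\cl\hat v_0=0$.

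\emph{The main obstacle} is the identity $\hat v_*-\hat A_*=\hat g$ on $D$: minimality of $\hat v_*$ alone only yields orthogonality to curl-free fields and the norm bound, so one has to bring in the structural equation \eqref{eq50} for the \emph{unconstrained} minimizer $\vec B_*$ in order to match the two curls, and then use that a curl-free $L^2(D;\R^2)$ field orthogonal to the closed subspace of curl-free fields must vanish. The remaining steps are routine manipulations with Theorem~\ref{t3}, Lemma~\ref{l012}, Lemma~\ref{l4} and Corollary~\ref{c3}.
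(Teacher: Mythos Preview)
Your proposal is correct. The forward direction ($\cl\hat v_0=0\Rightarrow\cl\hat v_*=0$) is identical to the paper's. In the main direction both arguments establish the same key identity $\hat v_*-\hat A_*=\widehat{\Cl\vec B_*}$ on $D$ and then read off $\lVert\vec B_*\rVert_*\le\tfrac{1}{2h_0}$ from the variational inequality, but the routes to that identity differ. The paper first obtains the inequality \eqref{eqs22} via convex duality (Lemma~\ref{l2}), deduces $\vec\xi_*\in N^\perp$, invokes the converse part of Lemma~\ref{l4} to manufacture an auxiliary field $\vec B_{\vec\xi_*}\in\mathcal C$ with $\Cl\vec B_{\vec\xi_*}=\vec\xi_*\mathbb{1}_D$, and then uses the Euler--Lagrange equation \eqref{t31} together with the orthogonality $\int_D\vec v_*\cdot(\Cl\vec B)=0$ (valid since $\cl\hat v_*=0$) with the test fields $\vec B_*,\vec B_{\vec\xi_*}$ to derive the bilinear identities \eqref{eq41}--\eqref{eq47}, concluding $\lVert\Cl\vec B_*-\Cl\vec B_{\vec\xi_*}\rVert_{L^2(D)}=0$. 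Your argument is a Hodge-type shortcut: you get the variational inequality directly by a first-variation computation exploiting $\cl\hat v_*=0$, observe that both $\hat\xi_*$ and $\hat g=\widehat{\Cl\vec B_*}$ lie in the orthogonal complement of the (closed) subspace of curl-free $L^2(D;\R^2)$ fields, and match their two-dimensional curls by appealing to \eqref{eq50}, which---as you correctly note---is the Euler--Lagrange relation for $\vec B_*$ in $\mathcal C$ and holds unconditionally. Your approach is more transparent and avoids the auxiliary construction of $\vec B_{\vec\xi_*}$; the paper's bilinear computation, on the other hand, stays entirely at the level of integrated energy identities and does not need to isolate the PDE \eqref{eq50}.
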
	

\begin{proof}
	First assume that $\cl \hat v_0 = 0$. By Theorem \ref{t3}, we have $\vec B_* = \vec B_0$. As $\vec A_0\in K_0$ and $\Cl(\vec A_0-\vec a)=\vec B_0=\vec B_*=\Cl(\vec A_*-\vec a)$, by Lemma \ref{l012} we have $\vec A_0=\vec A_*$. If $\hat v_*\ne\hat v_0$, then $F(\hat v_*;\vec A_*)<F(\hat v_0;\vec A_0)$ and it would follow that $\mathcal{G}_{h_0}(\hat v_*,\vec A_0)<\mathcal{G}_{h_0}(\hat v_0,\vec A_0)$, which is a contradiction as $(\hat v_0,\vec A_0)$ minimizes $\mathcal{G}_{h_0}$. This implies that $\hat v_* = \hat v_0$ and hence $\cl \hat v_*=0$. 
	
	Next assume that $\cl \hat v_*=0$. We perform the convex duality arguments for $F(\hat v;\vec A)$ as in the proof of Theorem \ref{t1} with the Hilbert space $H=L^2(D;\R^2)$. More precisely, let $\hat\xi=\hat v-\hat A\mathbb{1}_D$. Then we rewrite $F$ as
	\begin{equation*}
	F(\hat v;\vec A)=\mathcal{F}(\hat\xi)=\frac{1}{2}\lVert\hat\xi\rVert_{L^2(D)}^2+\Phi(\hat\xi),
	\end{equation*}
	where $\Phi(\hat\xi)=\frac{1}{2h_0}|\cl(\hat\xi+\hat A)|(D)$. Using (\ref{eqp1}), we compute
	\begin{equation*}
	\begin{split}
	\Phi^*(\hat\xi)&=\sup_{\hat\phi\in H}\lt(\int_{D}\hat\xi\cdot\hat\phi\, dx - \frac{1}{2h_0}\lt|\cl(\hat\phi+\hat A)\rt|(D)\rt)\\
	&=\sup_{\hat\phi\in H}\lt(\int_{D}\hat\xi\cdot\lt(\hat\phi+\hat A\rt) dx - \frac{1}{2h_0}\lt|\cl(\hat\phi+\hat A)\rt|(D)\rt)-\int_{D}\hat\xi\cdot\hat A\,dx\\
	&=\sup_{\hat\phi\in H}\lt(\int_{D}\hat\xi\cdot\hat\phi\, dx - \frac{1}{2h_0}\lt|\cl\hat\phi\rt|(D)\rt)-\int_{D}\hat\xi\cdot\hat A\,dx=-\int_{D}\hat\xi\cdot\hat A\,dx
	\end{split}
	\end{equation*}
	provided
	\begin{equation}\label{eqs22}
	\int_{D}\hat\xi\cdot\hat\phi\, dx\leq\frac{1}{2h_0}|\cl\hat\phi|(D) \text{ for all } \hat\phi\in L^2(D;\R^2),
	\end{equation}
	where $|\cl\hat\phi|(D)$ is understood to equal $+\infty$ if $\hat\phi\notin V$, and $\Phi^*(\hat\xi)=+\infty$ if (\ref{eqs22}) fails to hold true. By Lemma \ref{l2}, we have $\hat\xi$ minimizes $\mathcal{F}$ if and only if $\hat\xi$ minimizes $\mathcal{F}^*$, where
	\begin{equation*}%\label{eqs21}
	\mathcal{F}^*(\hat\xi):=\frac{1}{2}\int_{D}\lt|\hat\xi\rt|^2dx+\int_{D}\hat\xi\cdot \hat A\,dx
	\end{equation*}
	provided (\ref{eqs22}) holds true. As $\hat v_*$ is the minimizer of $F$, we know that $\hat\xi_*:=\hat v_*-\hat A_*\mathbb{1}_D$ satisfies (\ref{eqs22}) and it follows that $\vec\xi_*=(\hat\xi_*,0)\in N^{\perp}$, where the space $N$ is defined in Lemma \ref{l4}. Using Lemma \ref{l4}, there exists $\vec B_{\vec\xi_*}\in \mathcal{C}$ such that $\Cl\vec B_{\vec\xi_*} = \vec\xi_*\mathbb{1}_D$. 
	
	We claim that $\Cl\vec B_{*} = \vec\xi_*\mathbb{1}_D$. Indeed, using (\ref{t31}) with $\vec B = \vec B_*$ and $\vec B = \vec B_{\vec\xi_*}$, we obtain
	\begin{equation}\label{eq41}
	\int_{\R^3}\lt|\vec{B}_*\rt|^2\,dx+\int_{D}\lt|\Cl\vec{B}_*\rt|^2 \,dx+\int_{D}\vec{a}\cdot\lt(\Cl\vec{B}_*\rt)dx=0
	\end{equation}
	and 
	\begin{equation}\label{eq42}
	\int_{\R^3}\vec{B}_*\cdot\vec{B}_{\vec\xi_*}\,dx+\int_{D}\lt(\Cl\vec{B}_*\rt)\cdot\lt(\Cl\vec{B}_{\vec\xi_*}\rt)\,dx + \int_{D}\vec a \cdot\lt(\Cl\vec B_{\vec\xi_*}\rt)\,dx=0.
	\end{equation}
	On the other hand, as $\cl \hat v_*=0$, it follows from Lemma \ref{l4} that
	\begin{equation}\label{eq43}
	\int_{D}\vec{v}_*\cdot\lt(\Cl\vec{B}\rt)dx=0 \text{ for all } \vec{B}\in\mathcal{C}.
	\end{equation}
	Note that $\vec v_* =\Cl\vec B_{\vec\xi_*}  + \vec A_*$ and $\Cl\lt(\vec{A}_*-\vec{a}\rt) = \vec{B}_*$. Using $\vec B=\vec B_*$ in (\ref{eq43}), we obtain
	\begin{equation}\label{eq44}
	\begin{split}
	0&=\int_{D}\lt(\Cl\vec{B}_{\vec\xi_*}+\vec{A}_*\rt)\cdot\lt(\Cl\vec{B}_*\rt)dx\\
	&=\int_{\R^3}\lt(\Cl\vec{B}_{\vec\xi_*}+(\vec{A}_*-\vec a)+\vec a\rt)\cdot\lt(\Cl\vec{B}_*\rt)dx\\
	&=\int_{D}\lt(\Cl\vec{B}_{\vec\xi_*}\rt)\cdot\lt(\Cl\vec{B}_{*}\rt)dx+\int_{\R^3}\lt|\vec{B}_*\rt|^2dx+\int_{D}\vec{a}\cdot\lt(\Cl\vec{B}_*\rt)\,dx.
	\end{split}
	\end{equation}
	Similar calculations using $\vec B=\vec B_{\vec\xi_*}$ in (\ref{eq43}) give
	\begin{equation}\label{eq45}
	\int_{D}\lt|\Cl\vec{B}_{\vec\xi_*}\rt|^2+\int_{\R^3}\vec{B}_*\cdot \vec B_{\vec\xi_*}\,dx+\int_{D}\vec{a}\cdot\lt(\Cl\vec{B}_{\vec\xi_*}\rt)\,dx = 0. 
	\end{equation}
	Comparing (\ref{eq41}) with (\ref{eq44}) we observe that
	\begin{equation}\label{eq46}
	\int_{D}\lt|\Cl\vec{B}_*\rt|^2 \,dx = \int_{D}\lt(\Cl\vec{B}_{\vec\xi_*}\rt)\cdot\lt(\Cl\vec{B}_{*}\rt)dx.
	\end{equation}
	Next from (\ref{eq42}) and (\ref{eq45}) we have that
	\begin{equation}\label{eq47}
	\int_{D}\lt(\Cl\vec{B}_{\vec\xi_*}\rt)\cdot\lt(\Cl\vec{B}_{*}\rt)dx = \int_{D}\lt|\Cl\vec{B}_{\vec\xi_*}\rt|^2 \,dx.
	\end{equation}
	It follows from (\ref{eq46}) and (\ref{eq47}) that
	\begin{equation*}
	\int_{D} \lt| \Cl\vec{B}_{\vec\xi_*} - \Cl\vec{B}_{*}\rt|^2 dx = 0,
	\end{equation*}
	and hence $\vec\xi_*\mathbb{1}_D = \Cl\vec B_{\vec\xi_*} = \Cl\vec{B}_{*}$. 
	
	Now given $\vec\phi \in H^1(\R^3;\R^3)$ with $\int_{D}\lt|\cl\hat\phi\rt|\,dx\leq 1$, as $\hat\xi_*$ satisfies (\ref{eqs22}) and $\Cl\vec{B}_{*}=\vec\xi_*\mathbb{1}_D$, we have
	\begin{equation*}
	\begin{split}
	\int_{\R^3}\vec B_*\cdot \lt(\Cl\vec\phi\rt)\,dx &= \int_{\R^3}\lt(\Cl\vec B_*\rt)\cdot \vec\phi\,dx\\
	&=\int_{D}\lt(\Cl\vec B_*\rt)\cdot \vec\phi\,dx \\
	&=\int_{D}\hat\xi_* \cdot \hat\phi\,dx \leq \frac{1}{2h_0} \int_{D}\lt|\cl\hat \phi\rt|\,dx \leq \frac{1}{2h_0},
	\end{split}
	\end{equation*}
	where the integration by parts can be easily justified by approximation. Hence, by definition of the $\lVert\cdot\rVert_*$ norm in (\ref{eq48}), we have $\lVert \vec B_*\rVert_* \leq \frac{1}{2h_0}$. It follows from Theorem \ref{t3} that $\cl\hat v_0 = 0$ and this completes the proof of the lemma.
\end{proof}

We need an additional technical lemma.

\begin{lemma}\label{l13}
	For any $\hat v\in V$, we have
	\begin{equation}\label{eq305}
	|\cl \hat v|(D) = \int_{0}^{L} |\cl \hat v(\cdot,x_3)|(\Omega)\, dx_3.
	\end{equation}
\end{lemma}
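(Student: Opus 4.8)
The plan is to read the right-hand side correctly, dispose of a measurability point, and then prove the two inequalities separately. For $\hat w\in L^2(\Omega;\mathbb R^2)$ set
\[
|\cl\hat w|(\Omega):=\sup\Big\{-\int_\Omega\hat w\cdot\hat\nabla^{\perp}\psi\,d\hat x:\ \psi\in C_c^\infty(\Omega),\ \|\psi\|_\infty\le1\Big\}\in[0,\infty],
\]
where $\hat\nabla^{\perp}\psi=(-\partial_2\psi,\partial_1\psi)$; when finite this agrees with the total variation of the measure $\cl\hat w$. Since $\hat v\in L^2(D)$, the slice $\hat v(\cdot,x_3)$ lies in $L^2(\Omega;\mathbb R^2)$ for a.e.\ $x_3\in(0,L)$, so $g(x_3):=|\cl\hat v(\cdot,x_3)|(\Omega)$ is defined a.e. For each fixed $\psi$ the map $x_3\mapsto-\int_\Omega\hat v(\hat x,x_3)\cdot\hat\nabla^{\perp}\psi(\hat x)\,d\hat x$ lies in $L^2(0,L)$, hence is measurable; and the supremum above may be restricted to a fixed countable subset of $\{\psi\in C_c^\infty(\Omega):\|\psi\|_\infty\le1\}$, using that $C_c^\infty(\Omega)$ is separable in $W^{1,2}$. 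Therefore $g$ is measurable.

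For the inequality $|\cl\hat v|(D)\le\int_0^L g(x_3)\,dx_3$ I would use the standard duality $|\cl\hat v|(D)=\sup\{-\int_D\hat v\cdot\hat\nabla^{\perp}\varphi\,dx:\varphi\in C_c^\infty(D),\ \|\varphi\|_\infty\le1\}$. Given such a $\varphi$, Fubini gives $-\int_D\hat v\cdot\hat\nabla^{\perp}\varphi\,dx=\int_0^L\big(-\int_\Omega\hat v(\hat x,x_3)\cdot\hat\nabla^{\perp}\varphi(\hat x,x_3)\,d\hat x\big)\,dx_3$, and for each $x_3$ the function $\varphi(\cdot,x_3)$ belongs to $C_c^\infty(\Omega)$ with $\|\varphi(\cdot,x_3)\|_\infty\le1$, so the inner integral is $\le g(x_3)$. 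Integrating in $x_3$ and taking the supremum over $\varphi$ gives the claim.

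The reverse inequality is the substantive one. By the construction in the proof of Lemma~\ref{l7} applied with $\vec\psi\equiv0$ (equivalently, Proposition~2.3 of \cite{Pe}), there is a sequence $\hat v_k\in C_c^\infty(\mathbb R^3;\mathbb R^2)$ with $\hat v_k|_D\to\hat v$ in $L^2(D)$ and $|\cl\hat v_k|(D)\to|\cl\hat v|(D)$. For the smooth field $\hat v_k$ the identity is immediate from Fubini--Tonelli applied to the nonnegative function $|\cl\hat v_k|$: $|\cl\hat v_k|(D)=\int_D|\cl\hat v_k|\,dx=\int_0^L|\cl\hat v_k(\cdot,x_3)|(\Omega)\,dx_3$. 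Passing to a subsequence so that $\hat v_k(\cdot,x_3)\to\hat v(\cdot,x_3)$ in $L^2(\Omega)$ for a.e.\ $x_3$, lower semicontinuity of $\hat w\mapsto|\cl\hat w|(\Omega)$ under $L^2(\Omega)$-convergence — immediate from the sup-definition by testing against each fixed $\psi$ — gives $g(x_3)\le\liminf_k|\cl\hat v_k(\cdot,x_3)|(\Omega)$ for a.e.\ $x_3$. Fatou's lemma then yields $\int_0^L g(x_3)\,dx_3\le\liminf_k\int_0^L|\cl\hat v_k(\cdot,x_3)|(\Omega)\,dx_3=\liminf_k|\cl\hat v_k|(D)=|\cl\hat v|(D)$. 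Combining with the previous paragraph proves \eqref{eq305}, and in passing shows $g<\infty$ a.e., i.e.\ $\cl\hat v(\cdot,x_3)$ is a finite measure on $\Omega$ for a.e.\ $x_3$.

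The main obstacle sits in the reverse inequality, and within it the only genuinely delicate input is the \emph{boundary-compatible} smooth approximation with strict convergence of the total variation on all of $D$ (not merely on compact subsets): ordinary interior mollification may drop mass near $\partial D$, which is exactly why one needs the diffeomorphism-plus-translation construction from the proof of Lemma~\ref{l7}. The remaining ingredients — the duality characterization of total variation, Fubini, lower semicontinuity, and Fatou — are routine, as is the measurability of $g$.
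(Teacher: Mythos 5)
Your proof is correct, and its skeleton is the same as the paper's: the boundary-compatible smooth approximation from Lemma~\ref{l7} (with $\vec\psi\equiv 0$), the duality characterization of total variation plus Fubini for the inequality $|\cl\hat v|(D)\le\int_0^L|\cl\hat v(\cdot,x_3)|(\Omega)\,dx_3$, and slicewise $L^2$ convergence along a subsequence, lower semicontinuity, and Fatou for the reverse inequality. The one place where you genuinely diverge is the treatment of measurability of $x_3\mapsto f(x_3):=|\cl\hat v(\cdot,x_3)|(\Omega)$: you prove it directly by restricting the defining supremum to a fixed countable subset of $\{\psi\in C_c^\infty(\Omega):\|\psi\|_\infty\le1\}$, using separability of this set in $H^1$ together with $H^1$-continuity of $\psi\mapsto-\int_\Omega\hat w\cdot\hat\nabla^\perp\psi\,d\hat x$ for each fixed $L^2$ slice $\hat w$. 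The paper instead sidesteps measurability of $f$ by introducing the auxiliary function $g(x_3)=\liminf_k\int_\Omega|\cl\hat v_k(\hat x,x_3)|\,d\hat x$ (manifestly measurable), proving the identity for $g$, and only afterwards showing $f=g$ a.e.\ via an outer-measure/Chebyshev argument, which is its most delicate step. Your countable-dense-family device makes that final step unnecessary and yields a somewhat shorter argument at no loss of rigor; the paper's route, in exchange, avoids any appeal to separability and produces the extra (unneeded) identity $f=g$ a.e. Both are complete; your version is the more economical of the two.
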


\begin{proof}
	Given $\hat v\in V$, let $\{\hat v_k\}_k\subset C^{\infty}(\overline{D};\R^2)$ be a sequence such that $\hat v_k\rightarrow \hat v$ in $L^2(D;\R^2)$ and $|\cl\hat v_k|(D) \rightarrow |\cl\hat v|(D)$. Such a sequence exists from the proof of Lemma \ref{l7} (see also Proposition 2.3 in \cite{Pe}). By Fubini's theorem, for a.e. $x_3\in(0,L)$, the function $\hat v(\cdot,x_3)$ is measurable and belongs to $L^2(\Omega;\R^2)$. For such $x_3$, we define
	\begin{equation}\label{eqb1}
	f(x_3):= |\cl \hat v(\cdot,x_3)|(\Omega) = \sup_{\phi\in C^1_{c}(\Omega),\sup|\phi|\leq 1} \int_{\Omega} \hat v(\hat x,x_3)\cdot\hat\nabla^{\perp}\phi(\hat x)\,d\hat x
	\end{equation}
	and
	\begin{equation}\label{eqb2}
	g(x_3):= \liminf_{k\rightarrow \infty} \int_{\Omega}|\cl\hat v_{k}(\hat x,x_3)|\,d\hat x.
	\end{equation}
	As $\hat v_k\rightarrow \hat v$ in $L^2(D;\R^2)$, we have
	\begin{equation*}
	0=\lim_{k\rightarrow \infty} \int_{D}|\hat v_k-\hat v|^2 \,dx = \lim_{k\rightarrow\infty} \int_{0}^{L}\int_{\Omega} |\hat v_k-\hat v|^2\,d\hat x dx_3.
	\end{equation*}
	Therefore, denoting $h_k(x_3):= \int_{\Omega} |\hat v_k(\hat x,x_3)-\hat v(\hat x,x_3)|^2\,d\hat x$, it follows that $h_{k}\rightarrow 0$ in $L^1((0,L))$, and up on extraction of a subsequence (not relabeled), we have $h_{k}\rightarrow 0$ a.e. in $(0,L)$.  This reads as $\hat v_k(\cdot,x_3)\rightarrow \hat v(\cdot,x_3)$ in $L^2(\Omega)$ for a.e. $x_3\in (0,L)$, and thus $\cl \hat v_k$ converges to $\cl\hat v$ weakly* as measures. By  lower semicontinuity of the total variation measure with respect to the weak* convergence, we have
	\begin{equation}\label{eq301}
	f(x_3)\overset{(\ref{eqb1})}{=}|\cl\hat v(\cdot,x_3)|(\Omega) \leq \liminf_{k\rightarrow \infty} |\cl\hat v_k(\cdot,x_3)|(\Omega) \overset{(\ref{eqb2})}{=} g(x_3) \text{ for a.e. } x_3\in(0,L). 
	\end{equation}
	
	Note that since $\hat v_k\in C^{\infty}(\overline{D};\R^2)$, it is clear that $|\cl v_k(\cdot,x_3)|(\Omega)$ is a measurable function of $x_3$ and so is $g(x_3)$. For any $\phi\in C^1_{c}(D)$ with $\sup|\phi|\leq 1$, it follows from (\ref{eqb1}) and (\ref{eq301}) that
	\begin{equation*}
	\int_{D} \hat v(x)\cdot\nabla^{\perp}\phi(x)\, dx = \int_{0}^{L}\int_{\Omega}\hat v(\hat x,x_3)\cdot\nabla^{\perp}\phi(\hat x,x_3) \,d\hat x\, dx_3 \leq \int_{0}^{L} g(x_3)\, dx_3. 
	\end{equation*}
	This implies that
	\begin{equation}\label{eq303}
	|\cl\hat v|(D) \leq \int_{0}^{L} g(x_3)\, dx_3.
	\end{equation}
	On the other hand, using (\ref{eqb2}) and Fatou's lemma, we have
	\begin{equation}\label{eq304}
	\begin{split}
	\int_{0}^{L} g(x_3)\, dx_3 &= \int_{0}^{L}\liminf_{k\rightarrow \infty} |\cl \hat v_k(\cdot,x_3)|(\Omega)\, dx_3\\
	&\leq \lim_{k\rightarrow \infty} \int_{0}^{L}|\cl \hat v_k(\cdot,x_3)|(\Omega)\, dx_3\\
	& = \lim_{k\rightarrow \infty} |\cl\hat v_k|(D) = |\cl\hat v|(D). 
	\end{split}
	\end{equation}
	Combining (\ref{eq303}) with (\ref{eq304}) we obtain
	\begin{equation}\label{eqb3}
	|\cl\hat v|(D) = \int_{0}^{L} g(x_3)\, dx_3.
	\end{equation}
	
	Finally we show that $f(x_3)=g(x_3)$ a.e. in $(0,L)$. This together with (\ref{eqb3}) implies that $f(x_3)$ is measurable and (\ref{eq305}) holds true. Let us denote by $m^*$ the outer measure on $\R$. Define $U:=\{x_3\in(0,L): g(x_3)>f(x_3)\}$ and $U_j:=\{x_3\in(0,L): g(x_3)-f(x_3)>\frac{1}{j}\}$. It follows from (\ref{eq301}) that
	\begin{equation}\label{eqb4}
	U=\lt(\cup_{j} U_j\rt) \bigcup Z
	\end{equation}
	for some $Z$ with $|Z|=0$. Now we claim that $m^*(U_j)=0$ for all $j$. Using (\ref{eqb3}), for all $l$, there exists $\phi_l\in C^1_{c}(D)$ with $\sup|\phi_l|\leq 1$ such that
	\begin{equation*}
	\frac{1}{l}>\int_{0}^{L} g(x_3)\, dx_3-\int_{D} \hat v\cdot\hat\nabla^{\perp}\phi_l\, dx = \int_{0}^L\lt(g(x_3)-\int_{\Omega}\hat v\cdot\hat\nabla^{\perp}\phi_l\,d\hat x\rt) dx_3. 
	\end{equation*}
	Now we denote by $U_j^l:=\{x_3\in(0,L): g(x_3)-\int_{\Omega}\hat v\cdot\hat\nabla^{\perp}\phi_l\,d\hat x >\frac{1}{j}\}$. From (\ref{eqb1}), it is clear that $U_j\subset U_j^{l}$ for all $l$ and hence
	\begin{equation*}
	m^*(U_j) \leq |U_j^{l}| \text{ for all } l.
	\end{equation*}
	By Chebyshev's inequality, we have $|U_j^{l}|\leq \frac{j}{l}\rightarrow 0$ as $l\rightarrow \infty$ and hence $m^*(U_j)=0$ for all $j$. We deduce from (\ref{eqb4}) that $|U|=0$ and hence $f(x_3)=g(x_3)$ a.e. in $(0,L)$.
\end{proof}

\begin{proof}[Proof of Theorem \ref{t4}]
	Recall that $\vec A_*\in K_0$ satisfies $\Cl\lt(\vec{A}_*-\vec a\rt) = \vec B_*$ and $\nabla\cdot\lt(\vec{A}_*-\vec a\rt)=0$. It follows that $-\Delta \vec A_* = \Cl\Cl\vec A_* = \Cl \vec B_* \in L^2(\R^3;\R^3)$. By standard elliptic regularity and the Sobolev embedding theorem, we have $\vec A_* \in H^2_{loc}(\R^3;\R^3)\hookrightarrow C^{0,\alpha}_{loc}(\R^3;\R^3)$ for some $\alpha<1$. Further, as $B_*^3$ satisfies (\ref{eq50}), it follows from standard elliptic regularity that $B_*^3\in C^{\infty}(D)$. Let $\hat v_*$ be as in Lemma \ref{l013}. We first show that, for a.e. $x_3\in(0,L)$, $v_{*}(\cdot,x_3)$ minimizes (noting that $\hat A_*(\cdot,x_3)$ exists in the classical sense)
	\begin{equation*}
	F_{x_3}(\hat v):=\frac{1}{2}\lVert \hat v-\hat A_*(\cdot,x_3) \rVert_{L^2(\Omega)}^2 + \frac{1}{2h_0}|\cl\hat v|(\Omega).
	\end{equation*}
	
	We denote by $\hat v_{x_3}$ the unique minimizer of $F_{x_3}$ in $L^2(\Omega)$. By exactly the same convex duality arguments as in the proof of Lemma \ref{l013}, we obtain that $\hat\xi_{x_3}:=\hat v_{x_3}-\hat A_*(\cdot,x_3)$ minimizes 
	\begin{equation*}
	\mathcal{F}_{x_3}(\hat\xi):=\frac{1}{2}\int_{\Omega}|\hat\xi|^2d\hat x+\frac{1}{2h_0}\lt|\cl(\hat\xi+\hat A_*(\cdot,x_3))\rt|(\Omega)
	\end{equation*}
	if and only if $\hat\xi_{x_3}$ minimizes
	\begin{equation}\label{eq51}
	\mathcal{F}^*_{x_3}(\hat\xi):=\frac{1}{2}\int_{\Omega}\lt|\hat\xi\rt|^2d\hat x+\int_{\Omega}\hat\xi\cdot \hat A_*\,d\hat x,
	\end{equation}
	provided that
	\begin{equation}\label{eq52}
	\int_{\Omega}\hat\xi\cdot\hat\phi\, d\hat x\leq\frac{1}{2h_0}\lt|\cl\hat\phi\rt|(\Omega) \text{ for all } \hat\phi\in L^2(\Omega).
	\end{equation}
	It is easy to see that $\hat\nabla\cdot\hat\xi_{x_3}=0$, where $\hat\nabla=(\partial_1,\partial_2)$. Indeed, by standard Hodge decomposition, one has the orthogonal decomposition $\hat\xi_{x_3}=\hat\xi_1+\hat\xi_2$ where $\hat\nabla\cdot\hat\xi_1=0$ and $\cl\hat\xi_2=0$. It follows that
	\begin{equation*}
	\mathcal{F}_{x_3}(\hat\xi_{x_3}) = \frac{1}{2}\lVert \hat\xi_1 \rVert_{L^2(\Omega)}^2 + \frac{1}{2}\lVert \hat\xi_2 \rVert_{L^2(\Omega)}^2 + \frac{1}{2h_0}|\cl (\hat\xi_1+\hat A_*)|(\Omega) \geq \mathcal{F}_{x_3}(\hat\xi_1).
	\end{equation*}
	As $\hat\xi_{x_3}$ minimizes $\mathcal{F}_{x_3}$, it follows from the above that $\hat\xi_{x_3}=\hat\xi_1$ and hence $\hat\nabla\cdot\hat\xi_{x_3}=0$. Therefore there exists $\psi_{x_3}\in H^1(\Omega)$ such that $\hat\xi_{x_3}=(\partial_2\psi_{x_3},-\partial_1\psi_{x_3})$ and $\psi_{x_3}=0$ on $\partial\Omega$. For all $g\in L^2(\Omega)$, one can find $\hat\phi_g\in H^1(\Omega)$ such that $\cl\hat\phi_g=g$. Indeed, one can solve $\Delta f=g$ and let $\hat\phi_g:=\lt(-\partial_2 f, \partial_1 f\rt)$. Thus, it follows from \eqref{eq52} and an integration by parts that
	\begin{equation*}%\label{eqs23}
	\int_{\Omega}\psi_{x_3}\,g\,d\hat x=\int_{\Omega}\lt(\partial_2\psi_{x_3},-\partial_1\psi_{x_3}\rt)\cdot\hat\phi_g\,d\hat x\leq\frac{1}{2h_0}\lVert g\rVert_{L^1(\Omega)} \text{ for all } g\in L^2(\Omega).
	\end{equation*}
	As $L^2(\Omega)$ is dense in $L^1(\Omega)$, we deduce from the above that $\psi_{x_3}\in L^{\infty}(\Omega)$ and $\lVert \psi_{x_3}\rVert_{L^{\infty}}\leq \frac{1}{2h_0}$. Plugging $\psi_{x_3}$ into \eqref{eq51} and using $\cl \hat A_*=B_*^3+1$, we obtain that $\psi_{x_3}\in H^1_0(\Omega)$ minimizes
	\begin{equation}\label{eq54}
	\min_{\lVert\psi\rVert_{\infty}\leq\frac{1}{2h_0}}\frac{1}{2}\int_{\Omega}\lt|\hat\nabla\psi\rt|^2d\hat x+\int_{\Omega}\psi\lt(B_*^3(\cdot,x_3)+1\rt)d\hat x. 
	\end{equation}

	Now we define $\hat{\ti{v}}_*(\hat x,x_3):= \hat v_{x_3}(\hat x)$ and we show that $\hat{\ti{v}}_*\in V$. First we show that $\hat{\ti{v}}_*$ is measurable. As $B_*^3+1 \in C^{\infty}(\Omega)$, by Proposition \ref{p19} in the appendix, we have $\psi_{x_3}\in W^{2,p}_0(\Omega)$ and 
	\begin{equation}\label{ep9}
	\lVert\psi_{x_3}\rVert_{W^{2,p}(\Omega)} \leq C\lt(\lVert \psi_{x_3}\rVert_{L^p(\Omega)}+\lVert B_*^3(\cdot,x_3)+1\rVert_{L^p(\Omega)}\rt)
	\end{equation}
	for all $p$ and all $0<x_3<L$. By the Sobolev embedding theorem, $\psi_{x_3}\in C^{1,\alpha}(\overline\Omega)$ and thus $\hat v_{x_3} = (\partial_2\psi_{x_3},-\partial_1\psi_{x_3})+ \hat A_*(\cdot,x_3) \in C^{0,\alpha}(\overline\Omega)$ for some $\alpha<1$.  Given $x_3\in (0,L)$ and any sequence $\{x_3^j\}$ converging to $x_3$, it is clear that $\{B_*^3(\cdot,x_3^j)\}$ converges to $B_*^3(\cdot,x_3)$ locally uniformly on $\Omega$. Thus we have $\psi_{x_3^j}\rightarrow \psi_{x_3}$ in $H^1(\Omega)$ by Proposition \ref{p18}. It then follows from (\ref{ep9}) that $\{\psi_{x_3^j}\}$ forms a bounded sequence in $W^{2,p}_0(\Omega)$. By the compact Sobolev embedding theorem, up on extraction of a subsequence, $\{\psi_{x_3^j}\}$ converges strongly to $\psi_{x_3}$ in $C^1(\overline{\Omega})$. As the whole sequence converges to $\psi_{x_3}$ in $H^1(\Omega)$, the convergence in $C^1(\overline{\Omega})$ holds true for the whole sequence. Hence $\{\hat v_{x_3^j}\}$ converges to $\hat v_{x_3}$ uniformly on $\overline\Omega$. For all $n\in \mb{N}$, define $\hat v^n(\hat x,x_3):=\hat v_{0}(\hat x)\mathbb{1}_{(0,\frac{L}{n})}(x_3)+\sum_{k=1}^{n-1}\hat v_{\frac{kL}{n}}(\hat x)\mathbb{1}_{[\frac{kL}{n},\frac{(k+1)L}{n})}(x_3)$. It is clear that $\hat v^n$ is measurable for all $n$ and $\hat v^n\rightarrow \hat{\ti{v}}_*$ a.e. in $D=\Omega\times(0,L)$, and hence $\hat{\ti{v}}_*$ is measurable. 
	
	Next, defining $h(x_3):=|\cl \hat v_{x_3}|(\Omega)$, we show that $h$ is continuous. Given $x_3\in(0,L)$ and a sequence $\{x_3^j\}$ converging to $x_3$, as $\{\hat v_{x_3^j}\}$ converges to $\hat v_{x_3}$ uniformly on $\overline\Omega$, it is clear that $\cl \hat v_{x_3^j}$ converges to $\cl\hat v_{x_3}$ weakly* as measures. By lower semicontinuity we have $h(x_3)\leq \liminf h(x_3^j)$. Now we show that $h(x_3)\geq \limsup h(x_3^j)$. We argue by contradiction. Suppose not, then there exists some $\delta_0>0$ such that $h(x_3)\leq h(x_3^{j_k})-\delta_0$ for some subsequence $\{x_3^{j_k}\}$. This translates to $|\cl\hat v_{x_3}|(\Omega)\leq |\cl\hat v_{x_3^{j_k}}|(\Omega)-\delta_0$. Using Young's inequality, we obtain
	\begin{equation*}
	\begin{split}
	&F_{x_3^{j_k}}(\hat v_{x_3})=\frac{1}{2}\lt\lVert \hat v_{x_3}-\hat A_*(\cdot,x_3^{j_k}) \rt\rVert_{L^2(\Omega)}^2+\frac{1}{2h_0}\lt|\cl \hat v_{x_3}\rt|(\Omega)\\
	&\leq \frac{1}{2}(1+\sigma)\lt\lVert \hat v_{x_3^{j_k}}-\hat A_*(\cdot,x_3^{j_k}) \rt\rVert_{L^2(\Omega)}^2 + C(\sigma) \lt\lVert \hat v_{x_3}-\hat v_{x_3^{j_k}} \rt\rVert_{L^2(\Omega)}^2+\frac{1}{2h_0}\lt|\cl \hat v_{x_3^{j_k}}\rt|(\Omega)-\frac{\delta_0}{2h_0}\\
	&=F_{x_3^{j_k}}(\hat v_{x_3^{j_k}})+\frac{\sigma}{2}\lt\lVert \hat v_{x_3^{j_k}}-\hat A_*(\cdot,x_3^{j_k}) \rt\rVert_{L^2(\Omega)}^2 + C(\sigma) \lt\lVert \hat v_{x_3}-\hat v_{x_3^{j_k}}\rt\rVert_{L^2(\Omega)}^2-\frac{\delta_0}{2h_0}
	\end{split}
	\end{equation*}
	for all $\sigma>0$ and some constant $C(\sigma)$ depending only on $\sigma$. Note that $\{\hat v_{x_3^{j_k}}-\hat A_*(\cdot,x_3^{j_k})\}$ converges to $\hat v_{x_3}-\hat A_*(\cdot,x_3)$ uniformly on $\overline\Omega$ and thus $\lVert \hat v_{x_3^{j_k}}-\hat A_*(\cdot,x_3^{j_k}) \rVert_{L^2(\Omega)}^2$ is bounded independent of $k$ for $k$ sufficiently large. Thus by letting $\sigma\rightarrow 0$ and then $k\rightarrow\infty$ we observe that $F_{x_3^{j_k}}(\hat v_{x_3})<F_{x_3^{j_k}}(\hat v_{x_3^{j_k}})$ for all $k$ sufficiently large, which is a contradiction as $\hat v_{x_3^{j_k}}$ is the minimizer of $F_{x_3^{j_k}}$. This shows that $\limsup h(x_3^j)\leq h(x_3)\leq \liminf h(x_3^j)$ for all sequences $\{x_3^j\}$ converging to $x_3$, and thus $h$ is continuous in $(0,L)$. In particular, $h$ is measurable. Using Lemma \ref{l13}, we have that
	\begin{equation}\label{ep10}
	\begin{split}
	&\frac{1}{2}\int_{D} \lt|\hat v_* - \hat A_*\rt|^2\, dx + \frac{1}{2h_0}\lt|\cl\hat v_*\rt|(D) \\
	&\qd\qd\qd= \int_{0}^{L} \lt(\frac{1}{2}\int_{\Omega} \lt|\hat v_*(\hat x,x_3) - \hat A_*(\hat x,x_3)\rt|^2\, d\hat x + \frac{1}{2h_0}\lt|\cl\hat v_*(\cdot,x_3)\rt|(\Omega)\rt) dx_3\\
	&\qd\qd\qd \geq \int_{0}^{L} \lt(\frac{1}{2}\int_{\Omega} \lt|\hat v_{x_3}(\hat x) - \hat A_*(\hat x,x_3)\rt|^2\, d\hat x + \frac{1}{2h_0}\lt|\cl\hat v_{x_3}\rt|(\Omega)\rt) dx_3.
	\end{split}
	\end{equation}
	It is clear from the above that $\hat{\ti v}_*\in L^2(D;\R^2)$ and $\int_{0}^{L}h(x_3)dx_3<\infty$. Further, it is straightforward to see that $\lt|\cl \hat{\ti v}_*\rt|(D)\leq \int_{0}^{L}h(x_3)dx_3$ and thus $\hat{\ti v}_*\in V$. Then (\ref{ep10}) becomes
	\begin{equation*}
	\frac{1}{2}\int_{D} \lt|\hat v_* - \hat A_*\rt|^2\, dx + \frac{1}{2h_0}\lt|\cl \hat v_*\rt|(D) \geq \frac{1}{2}\int_{D} \lt|\hat{\ti{v}}_* - \hat A_*\rt|^2\, dx + \frac{1}{2h_0}\lt|\cl\hat{\ti v}_*\rt|(D). 
	\end{equation*}
	As $\hat v_*$ is the unique minimizer of $F$ (given in (\ref{ep11})) in $V$, it follows that $\hat{\ti v}_* = \hat v_*$. 
	
	Using Lemma \ref{l13}, we have that $\lt|\cl\hat v_*\rt|(D)=0$ if and only if $\lt|\cl\hat v_{x_3}\rt|(\Omega)=0$ for a.e. $x_3\in(0,L)$. Given $x_3\in(0,L)$, recall that $\hat v_{x_3}=(\partial_2\psi_{x_3},-\partial_1\psi_{x_3})+\hat A_*(\cdot,x_3)$ and $\vec{B}_*=\Cl(\vec{A}_*-\vec{a})$, and thus $\cl \hat v_{x_3}=-\hat\Delta\psi_{x_3}+B_*^3(\cdot,x_3)+1$ as measures, where $\hat\Delta$ is the $2$d Laplacian. By standard theory about the obstacle problem (\ref{eq54}), $\cl\hat v_{x_3}=0$ if and only if $-\hat\Delta\psi_{x_3}+B_*^3(\cdot,x_3)+1=0$ if and only if the unique solution $\ti\psi_{x_3}$ of the following problem
	\begin{equation*}
	\begin{cases}
	-\hat\Delta\psi+(B_*^3(\cdot,x_3)+1)=0 &\text{in }\Omega,\\
	\psi=0 &\text{on } \partial\Omega
	\end{cases}
	\end{equation*}
	satisfies $\lVert\ti\psi_{x_3}\rVert_{\infty}\leq\frac{1}{2h_0}$ and thus $\psi_{x_3}=\ti\psi_{x_3}$. Therefore $\cl\hat v_*=0$ if and only if $\lVert\psi_{x_3}\rVert_{\infty}\leq\frac{1}{2h_0}$ for a.e. $x_3\in(0,L)$. This is further equivalent to $\lVert\psi_{x_3}\rVert_{\infty}\leq\frac{1}{2h_0}$ for all $x_3\in(0,L)$ because of the convergence of $\{\psi_{x_3^j}\}$ to $\psi_{x_3}$ in $C^1(\overline\Omega)$ provided $x_3^j\rightarrow x_3$. The conclusion of Theorem \ref{t4} hence follows from this and Lemma \ref{l013}.
\end{proof}

% % % % % % % % % % % % % % % % % % % % % % % % % % % % % % % % % % % % % % % % % % % % % % % % % % %
%
% appendix
%
% % % % % % % % % % % % % % % % % % % % % % % % % % % % % % % % % % % % % % % % % % % % % % % % % % % %

\section{Appendix}\label{appendix}

In this appendix, we give the proof of some regularity for the double obstacle problem (\ref{eq54}) in the proof of Theorem \ref{t4}. We consider a slightly more general problem. Let $a_1<0<a_2$ be two constants, and denote
\begin{equation}\label{ep3}
\mathbb{K}:=\lt\{u\in H^1_0(\Omega): a_1\leq u\leq a_2\text{ a.e. in }\Omega\rt\}. 
\end{equation}
It is clear that $\mb{K}$ is a closed, nonempty and convex subset of $H^1_0(\Omega)$. Let $f\in L^2(\Omega)$ and consider the minimization problem
\begin{equation}\label{ep1}
\min_{u\in \mb{K}}\lt(\frac{1}{2}\int_{\Omega}\lt|\nabla u\rt|^2\,dx-\int_{\Omega}u f\,dx\rt). 
\end{equation}
It is standard that $u$ solves the problem (\ref{ep1}) if and only if $u$ solves the variational inequality
\begin{equation}\label{ep2}
\int_{\Omega}\nabla u\cdot\nabla (v-u)dx \geq \int_{\Omega} f\lt(v-u\rt)dx \text{ for all } v\in\mb{K}.
\end{equation}
By the Lions-Stampacchia theorem (see Theorem 3.1 in \cite{ro}, page 93) we have
\begin{proposition}\label{p18}
	Let $\Omega\subset\R^2$ be a bounded domain. Then for each $f\in L^2(\Omega)$, there exists a unique solution $u\in \mb{K}$ to the variational inequality (\ref{ep2}). Further let $f_1, f_2\in L^2(\Omega)$ and $u_1, u_2\in \mb{K}$ be the solution of the variational inequality (\ref{ep2}) with data $f_1, f_2$ respectively, then
	\begin{equation*}
	\lVert u_1-u_2\rVert_{H^1}\leq C\lVert f_1-f_2\rVert_{L^2}
	\end{equation*}
	for some constant $C$ independent of $f_1, f_2$.
\end{proposition}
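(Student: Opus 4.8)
The plan is to invoke the Lions--Stampacchia theorem directly for the existence and uniqueness assertion, and then to obtain the Lipschitz bound by testing each variational inequality against the other solution. First I would check the hypotheses of the Lions--Stampacchia theorem: the set $\mb{K}$ defined in (\ref{ep3}) is nonempty (it contains $0$ since $a_1<0<a_2$), convex, and closed in $H^1_0(\Omega)$ --- it is the intersection of $H^1_0(\Omega)$ with the $L^2$-closed convex set $\{u:a_1\leq u\leq a_2\text{ a.e.}\}$, and $H^1_0$-convergence implies $L^2$-convergence, so a limit of elements of $\mb{K}$ still satisfies the pointwise bounds. The bilinear form $a(u,v):=\int_{\Omega}\nabla u\cdot\nabla v\,dx$ is bounded on $H^1_0(\Omega)$ and coercive by the Poincar\'e inequality (here we use that $\Omega$ is bounded), and $v\mapsto\int_{\Omega}fv\,dx$ is a bounded linear functional on $H^1_0(\Omega)$ since $f\in L^2(\Omega)\subset H^{-1}(\Omega)$. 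Theorem 3.1 in \cite{ro} then yields a unique $u\in\mb{K}$ solving (\ref{ep2}).

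For the continuous dependence estimate, let $u_1,u_2\in\mb{K}$ solve (\ref{ep2}) with data $f_1,f_2$, respectively. Since $u_2\in\mb{K}$, I would take $v=u_2$ in the variational inequality satisfied by $u_1$, and since $u_1\in\mb{K}$, take $v=u_1$ in the one satisfied by $u_2$. Adding the two resulting inequalities, the left-hand sides combine into $-\int_{\Omega}|\nabla(u_1-u_2)|^2\,dx$ and the right-hand sides into $-\int_{\Omega}(f_1-f_2)(u_1-u_2)\,dx$, so that
\begin{equation*}
\int_{\Omega}|\nabla(u_1-u_2)|^2\,dx\leq\int_{\Omega}(f_1-f_2)(u_1-u_2)\,dx\leq\lVert f_1-f_2\rVert_{L^2}\lVert u_1-u_2\rVert_{L^2}
\end{equation*}
by the Cauchy--Schwarz inequality. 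Applying the Poincar\'e inequality to bound $\lVert u_1-u_2\rVert_{L^2}$ by $C\lVert\nabla(u_1-u_2)\rVert_{L^2}$ and dividing gives $\lVert\nabla(u_1-u_2)\rVert_{L^2}\leq C\lVert f_1-f_2\rVert_{L^2}$, and one more application of Poincar\'e upgrades this to the claimed bound in the full $H^1$ norm.

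This argument is entirely standard and presents no substantial obstacle; the only points requiring a little care are the closedness of $\mb{K}$ in $H^1_0(\Omega)$, which is what makes the Lions--Stampacchia theorem applicable, and bookkeeping to confirm that the constant $C$ depends only on $\Omega$ through its Poincar\'e constant and not on $f_1,f_2$.
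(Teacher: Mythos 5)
Your argument is correct and follows essentially the same route as the paper, which simply invokes the Lions--Stampacchia theorem (Theorem 3.1 in \cite{ro}); your verification of the hypotheses and the standard cross-testing derivation of the Lipschitz estimate just spell out what that citation provides.
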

The $C^{1,\alpha}$ regularity of solutions to single obstacle problems is well-known (see, e.g., \cite{ro}). On the other hand, the literature on double obstacle problems seems to be limited, although similar regularity results have been established (see, e.g., \cite{dmv}, \cite{kz}, \cite{du}). What we need is a strong dependence on the data of the solution to the problem, which should be well-known to experts. However, we were not able to find an explicit reference on this result. So we provide a proof by slightly modifying the proof for single obstacle problems (see Chapter 5 in \cite{ro}) for the convenience of the reader.

\begin{proposition}\label{p19}
	Let $\Omega\subset\R^2$ be a bounded smooth domain. Let $\mb{K}$ be defined in (\ref{ep3}) and $f\in L^p(\Omega)$ for some $p\geq 2$. Then the solution $u\in\mb{K}$ of the variational inequality (\ref{ep2}) satisfies $u\in W_0^{2,p}(\Omega)$ and 
	\begin{equation}\label{ep6}
	\lVert u\rVert_{W^{2,p}(\Omega)}\leq C\lt(\lVert u\rVert_{L^p(\Omega)}+\lVert f\rVert_{L^p(\Omega)}\rt)
	\end{equation}
	for some constant $C$ independent of $f$.
\end{proposition}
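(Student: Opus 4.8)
The plan is to prove Proposition~\ref{p19} by penalization, following the scheme used for the single obstacle problem in \cite{ro}, Chapter~5, with the modification that \emph{both} unilateral constraints defining $\mb{K}$ are penalized simultaneously. The decisive simplification here is that the obstacles $a_1,a_2$ are \emph{constant}: this is precisely what makes the Lewy--Stampacchia-type estimate below produce the clean pointwise bound $\lvert\Delta u\rvert\le\lvert f\rvert$ a.e.\ in $\Omega$, from which \eqref{ep6} follows at once via the Calder\'on--Zygmund $W^{2,p}$ estimate on the smooth bounded domain $\Omega$. I would first carry out the argument for $f\in C^1(\overline\Omega)$ and then recover general $f\in L^p(\Omega)$, $p\ge 2$, by approximating $f$ in $L^p$ by smooth $f_k$, applying the smooth-data estimate to the solutions $u_k$ of \eqref{ep2} with data $f_k$, and passing to the limit: by Proposition~\ref{p18} one has $u_k\to u$ in $H^1_0(\Omega)$, hence in $L^p(\Omega)$ by the two-dimensional Sobolev embedding, so the uniform $W^{2,p}$ bound is inherited by $u$ through weak lower semicontinuity, and $u\in W^{2,p}_0(\Omega)$.

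I would set up the penalization as follows. For $\ep>0$, let $\beta_\ep(t):=\tfrac{1}{\ep}\max(t-a_2,0)-\tfrac{1}{\ep}\max(a_1-t,0)$, a nondecreasing Lipschitz function with $\beta_\ep\equiv 0$ on $[a_1,a_2]$, $\beta_\ep(t)\to+\infty$ for $t>a_2$ and $\beta_\ep(t)\to-\infty$ for $t<a_1$ as $\ep\to 0^+$, and $\beta_\ep(0)=0$ since $a_1<0<a_2$. The map $v\mapsto -\Delta v+\beta_\ep(v)$ is monotone, coercive and continuous on $H^1_0(\Omega)$, so there is a unique $u_\ep\in H^1_0(\Omega)$ with $-\Delta u_\ep+\beta_\ep(u_\ep)=f$ weakly. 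For $f\in C^1(\overline\Omega)$, elliptic bootstrap gives $u_\ep\in W^{2,q}(\Omega)\cap C^{1,\alpha}(\overline\Omega)$ for every $q<\infty$; in particular $u_\ep$ is bounded, so $g_\ep:=\lvert\beta_\ep(u_\ep)\rvert^{p-2}\beta_\ep(u_\ep)\in H^1_0(\Omega)$ (using $\beta_\ep(u_\ep)\in H^1(\Omega)\cap L^\infty(\Omega)$ and $\beta_\ep(u_\ep)=\beta_\ep(0)=0$ on $\partial\Omega$). Testing the equation with $u_\ep$ and using $\beta_\ep(t)t\ge 0$ gives a uniform $H^1_0$ bound on $u_\ep$.

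The heart of the argument is the uniform estimate $\lVert\beta_\ep(u_\ep)\rVert_{L^p(\Omega)}\le\lVert f\rVert_{L^p(\Omega)}$, and this is the step I expect to require the most care. To obtain it, let $\bar u\in H^1_0(\Omega)$ solve $-\Delta\bar u=f$ weakly, so $w_\ep:=u_\ep-\bar u\in H^1_0(\Omega)$ satisfies $-\Delta w_\ep=-\beta_\ep(u_\ep)$ weakly. Testing this with $g_\ep$, using $\nabla g_\ep=(p-1)\lvert\beta_\ep(u_\ep)\rvert^{p-2}\beta_\ep'(u_\ep)\nabla u_\ep$ with $\beta_\ep'\ge 0$, and the identity $\int_\Omega\nabla\bar u\cdot\nabla g_\ep=\int_\Omega fg_\ep$, one finds
\begin{equation*}
-\int_\Omega\lvert\beta_\ep(u_\ep)\rvert^p\,dx=\int_\Omega\nabla u_\ep\cdot\nabla g_\ep\,dx-\int_\Omega fg_\ep\,dx\ge-\int_\Omega fg_\ep\,dx,
\end{equation*}
hence $\int_\Omega\lvert\beta_\ep(u_\ep)\rvert^p\le\int_\Omega\lvert f\rvert\,\lvert\beta_\ep(u_\ep)\rvert^{p-1}\le\lVert f\rVert_{L^p}\lVert\beta_\ep(u_\ep)\rVert_{L^p}^{p-1}$ by H\"older, giving the claimed bound. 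It is exactly here that the constancy of the obstacles is used: because $\beta_\ep$ is evaluated at $u_\ep$ directly, rather than at $u_\ep-\psi_i$ for nonconstant obstacles $\psi_i$, no term $-\Delta\psi_i$ is produced and the gradient term $\int_\Omega\nabla u_\ep\cdot\nabla g_\ep$ is nonnegative with nothing extra. The subtleties to check carefully are that $g_\ep$ is an admissible test function (this is where the higher regularity of $u_\ep$, hence the reduction to smooth $f$, is genuinely needed) and the sign of that gradient term. Consequently $\lVert\Delta u_\ep\rVert_{L^p(\Omega)}=\lVert f-\beta_\ep(u_\ep)\rVert_{L^p(\Omega)}\le 2\lVert f\rVert_{L^p(\Omega)}$, and the Calder\'on--Zygmund estimate on the smooth domain $\Omega$ gives $\lVert u_\ep\rVert_{W^{2,p}(\Omega)}\le C(\lVert u_\ep\rVert_{L^p(\Omega)}+\lVert f\rVert_{L^p(\Omega)})$ with $C$ independent of $\ep$.

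Finally I would pass to the limit $\ep\to 0$. Along a subsequence, $u_\ep\rightharpoonup u$ in $W^{2,p}(\Omega)$ and, by the compact Sobolev embedding in two dimensions, $u_\ep\to u$ in $H^1_0(\Omega)$ and a.e.\ in $\Omega$. The bound $\lVert\beta_\ep(u_\ep)\rVert_{L^p}\le\lVert f\rVert_{L^p}$ forces $\lvert\{u_\ep>a_2+\delta\}\rvert\le(\ep/\delta)^p\lVert f\rVert_{L^p}^p$, and similarly near $a_1$, so $a_1\le u\le a_2$ a.e., i.e.\ $u\in\mb{K}$. For any $v\in\mb{K}$, testing the penalized equation with $v-u_\ep$ and observing that $\beta_\ep(u_\ep)(v-u_\ep)\le 0$ pointwise (where $\beta_\ep(u_\ep)>0$ one has $u_\ep>a_2\ge v$, and symmetrically where $\beta_\ep(u_\ep)<0$), we get $\int_\Omega\nabla u_\ep\cdot\nabla(v-u_\ep)\ge\int_\Omega f(v-u_\ep)$; letting $\ep\to 0$ and using weak lower semicontinuity of $v\mapsto\int_\Omega\lvert\nabla v\rvert^2$, this passes to $\int_\Omega\nabla u\cdot\nabla(v-u)\ge\int_\Omega f(v-u)$, so $u$ solves \eqref{ep2} and, by the uniqueness in Proposition~\ref{p18}, $u$ is the solution. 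Weak lower semicontinuity of the $W^{2,p}$ norm then yields \eqref{ep6}, and $u\in W^{2,p}_0(\Omega)$ since each $u_\ep$ has vanishing trace.
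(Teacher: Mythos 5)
Your argument is correct, but it takes a genuinely different route from the paper. The paper proves the Lewy--Stampacchia-type bound $-f^-\leq-\Delta u\leq f^+$ in $H^{-1}$ directly at the level of the variational inequality, by comparing $u$ with the solutions of two auxiliary \emph{one-sided} obstacle problems (Lemma \ref{l20}) and showing each coincides with $u$; it then upgrades this to $\lVert\Delta u\rVert_{L^p}\leq 2\lVert f\rVert_{L^p}$ by a density/duality argument (first for $p=2$ in Lemma \ref{l21}, then for general $p$), and concludes via Calder\'on--Zygmund. You instead penalize both constraints, obtain the uniform bound $\lVert\beta_\ep(u_\ep)\rVert_{L^p}\leq\lVert f\rVert_{L^p}$ by testing with $|\beta_\ep(u_\ep)|^{p-2}\beta_\ep(u_\ep)$ (the sign of the gradient term and the admissibility of this test function are indeed the delicate points, and your bootstrap for smooth data plus the observation that $\beta_\ep(u_\ep)$ vanishes near $\partial\Omega$ handle them), and then pass to the limit, identifying the limit with $u$ through Proposition \ref{p18}. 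What each approach buys: the paper's comparison argument works directly for $f\in L^p$ with no approximation of the data and yields the sharper pointwise inequality $-f^-\leq-\Delta u\leq f^+$; your penalization is self-contained (it reproduces existence as a by-product) and reaches $W^{2,p}$ in a single pass, at the cost of the extra smoothing-of-$f$ layer and the limit identification. Two small remarks: your introduction advertises the pointwise bound $|\Delta u|\leq|f|$ a.e., but the argument you actually run only delivers the integral bound $\lVert\Delta u\rVert_{L^p}\leq 2\lVert f\rVert_{L^p}$ --- which is all that \eqref{ep6} needs, so nothing is lost; and your final claim that $u\in W^{2,p}_0(\Omega)$ really establishes $u\in W^{2,p}(\Omega)\cap H^1_0(\Omega)$, which is the sense in which the paper uses that notation as well.
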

We slightly modify the duality argument for single obstacle problem as in Chapter 5, \cite{ro}. We need the following lemmas.

\begin{lemma}\label{l20}
	Let $u\in\mb{K}$ be the solution of the variational inequality (\ref{ep2}). Then we have $-f^-\leq -\Delta u\leq f^+$ in $H^{-1}$, where $f^+=\max\{f,0\}$ and $f^-=\max\{-f,0\}$ are the positive and negative parts of $f$, respectively.
\end{lemma}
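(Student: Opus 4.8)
The plan is to read the two bounds directly off the variational inequality \eqref{ep2} by inserting competitors obtained from $u$ by a small, one-signed perturbation that is then projected back into $\mathbb{K}$ — the natural two-sided analogue of the single-obstacle argument. Fix $\varphi\in H^1_0(\Omega)$ with $\varphi\geq 0$ and $t>0$. Since $a_1\leq u\leq a_2$ a.e., both $v_t^-:=\max(u-t\varphi,a_1)$ and $v_t^+:=\min(u+t\varphi,a_2)$ lie in $\mathbb{K}$, and $v_t^--u=-t\varphi+w_t$, $v_t^+-u=t\varphi-z_t$, where $w_t:=(t\varphi-(u-a_1))^+\geq 0$ and $z_t:=(u+t\varphi-a_2)^+\geq 0$ belong to $H^1_0(\Omega)$. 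Substituting $v_t^-$ and $v_t^+$ into \eqref{ep2} and dividing by $t$ yields
\begin{equation*}
\int_{\Omega}\nabla u\cdot\nabla\varphi\,dx-\int_{\Omega}f\varphi\,dx\ \leq\ \tfrac1t\int_{\Omega}\nabla u\cdot\nabla w_t\,dx-\tfrac1t\int_{\Omega}f w_t\,dx
\end{equation*}
and the reversed inequality with $w_t$ replaced by $z_t$. By the chain rule for truncations of Sobolev functions, $\nabla w_t=(t\nabla\varphi-\nabla u)\mathbb{1}_{\{w_t>0\}}$ a.e., so $\tfrac1t\int\nabla u\cdot\nabla w_t=\int_{\{w_t>0\}}\nabla u\cdot\nabla\varphi-\tfrac1t\int_{\{w_t>0\}}|\nabla u|^2\leq \int_{\{w_t>0\}}\nabla u\cdot\nabla\varphi$; the identity for $z_t$ is the same with the sign of the quadratic term reversed, so $\tfrac1t\int\nabla u\cdot\nabla z_t\geq\int_{\{z_t>0\}}\nabla u\cdot\nabla\varphi$. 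In both cases the unwanted quadratic term enters with a favorable sign and is simply discarded.

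Next I would let $t\to 0^+$. The sets $\{w_t>0\}$ decrease to $\{u=a_1\}\cap\{\varphi>0\}$ and $\{z_t>0\}$ decrease to $\{u=a_2\}\cap\{\varphi>0\}$ (up to null sets); since $0\leq w_t/t\leq\varphi\in L^2(\Omega)$ and $w_t/t\to\varphi\mathbb{1}_{\{u=a_1\}}$ a.e. (and likewise for $z_t$), dominated convergence gives $\tfrac1t\int f w_t\to\int_{\{u=a_1\}}f\varphi$ and $\tfrac1t\int f z_t\to\int_{\{u=a_2\}}f\varphi$. Moreover $\int_{\{w_t>0\}}\nabla u\cdot\nabla\varphi\to\int_{\{u=a_1\}\cap\{\varphi>0\}}\nabla u\cdot\nabla\varphi=0$, and similarly for $z_t$, because $\nabla u=0$ a.e. on the level sets $\{u=a_1\}$ and $\{u=a_2\}$ (see, e.g., Lemma 7.7 in \cite{GT}). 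Feeding these limits into the two inequalities above produces, for every $\varphi\in H^1_0(\Omega)$ with $\varphi\geq 0$,
\begin{equation*}
\int_{\{u<a_2\}}f\varphi\,dx\ \leq\ \int_{\Omega}\nabla u\cdot\nabla\varphi\,dx\ \leq\ \int_{\{u>a_1\}}f\varphi\,dx,
\end{equation*}
i.e. $f\mathbb{1}_{\{u<a_2\}}\leq -\Delta u\leq f\mathbb{1}_{\{u>a_1\}}$ in $H^{-1}(\Omega)$.

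Finally, the lemma follows from the elementary pointwise inequalities $f\mathbb{1}_E\leq f^+$ and $f\mathbb{1}_E\geq -f^-$, valid for any measurable set $E$: taking $E=\{u>a_1\}$ and $E=\{u<a_2\}$ sandwiches the last display to give $-f^-\leq -\Delta u\leq f^+$ in $H^{-1}$. The main obstacle is the passage $t\to 0^+$: one must check that $w_t$ and $z_t$ are genuinely admissible perturbations and lie in $H^1_0(\Omega)$, identify the a.e. limits of the rescaled truncations $w_t/t$, $z_t/t$, and control the gradient cross-terms — the last being precisely where the vanishing of $\nabla u$ on the level sets $\{u=a_i\}$ is used. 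Everything else is a direct manipulation of \eqref{ep2}.
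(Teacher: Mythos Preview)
Your argument is correct and actually yields the sharper intermediate bound $f\mathbb{1}_{\{u<a_2\}}\leq -\Delta u\leq f\mathbb{1}_{\{u>a_1\}}$ in $H^{-1}$ before passing to $-f^-\leq -\Delta u\leq f^+$. The route, however, is quite different from the paper's. The paper does not perturb $u$ directly; instead, for each bound it introduces an auxiliary \emph{single} obstacle problem (e.g., for the lower bound: find $y\in H^1_0(\Omega)$ with $y\geq u$ solving the variational inequality with data $-f^-$), and then proves by comparison arguments with truncated test functions that the auxiliary solution coincides with $u$. The desired inequality then drops out of the single obstacle formulation by testing with $w=u+v$, $v\geq 0$. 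Your approach is more elementary and self-contained: it stays entirely inside the double obstacle inequality \eqref{ep2}, uses one-sided truncated competitors, and relies on the vanishing of $\nabla u$ on level sets (Lemma~7.7 in \cite{GT}) to kill the boundary contributions in the limit $t\to 0^+$. The paper's approach, by contrast, avoids any limiting procedure in $t$ and any appeal to Stampacchia's level-set lemma, at the cost of setting up and analyzing two auxiliary problems. Both arguments are short; yours gives slightly more information (the localized bounds on the coincidence sets), while the paper's fits the pattern of reducing double to single obstacle theory.
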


\begin{proof}
	To show $-f^-\leq -\Delta u$ in $H^{-1}$, we consider $y\in H^1_0(\Omega)$ satisfying 
	\begin{equation}\label{ep4}
	y\geq u\text{ a.e.},\qd \int_{\Omega}\nabla y\cdot\nabla\lt(w-y\rt)dx\geq -\int_{\Omega}f^-\lt(w-y\rt)dx \text{ for all } w\in H^1_0(\Omega), w\geq u.
	\end{equation}
	This is a single obstacle problem. The existence and uniqueness of solutions is well-known. We claim that $y=u$. Then taking $w=u+v$ for any $v\in H^1_0(\Omega)$ and $v\geq 0$, it follows that
	\begin{equation*}
	\int_{\Omega} \nabla u\cdot \nabla vdx\geq -\int_{\Omega}f^- vdx \text{ for all } v\in H^1_0(\Omega), v\geq 0, 
	\end{equation*}
	which is exactly what we need to conclude that $-f^-\leq -\Delta u$ in $H^{-1}$.
	
	We first show that $y\leq a_2$ a.e. in $\Omega$. Let $w=y-\lt(y-a_2\rt)^+\geq u$. Plugging $w$ into (\ref{ep4}) gives
	\begin{equation*}
	-\int_{\Omega}\nabla y\cdot\nabla\lt(y-a_2\rt)^+dx\geq \int_{\Omega}f^-\lt(y-a_2\rt)^+dx.
	\end{equation*}
	Also noting that $\nabla a_2=0$, we have
	\begin{equation*}
	\int_{\Omega}\nabla a_2\cdot\nabla\lt(y-a_2\rt)^+dx\geq -\int_{\Omega}f^-\lt(y-a_2\rt)^+dx.
	\end{equation*}
	Adding the above two inequalities gives
	\begin{equation*}
	\int_{\Omega}\lt|\nabla\lt(y-a_2\rt)^+\rt|^2dx=\int_{\Omega}\nabla\lt(y-a_2\rt)\cdot\nabla\lt(y-a_2\rt)^+dx\leq 0
	\end{equation*}
	and thus $\lt(y-a_2\rt)^+=0$ a.e., which implies $y\leq a_2$ a.e. in $\Omega$.
	
	Next we show that $y\leq u$. Let $v=u+\lt(y-u\rt)^+\geq u$. Then $a_1\leq v\leq a_2$. Plugging $v$ into (\ref{ep2}) we obtain
	\begin{equation*}
	\int_{\Omega}\nabla u\cdot\nabla (y-u)^+dx \geq \int_{\Omega} f(y-u)^+dx.
	\end{equation*}
	Take $w=y-(y-u)^+\geq u$. Plugging $w$ into (\ref{ep4}) gives
	\begin{equation*}
	-\int_{\Omega}\nabla y\cdot\nabla(y-u)^+dx\geq \int_{\Omega}f^-(y-u)^+dx.
	\end{equation*}
	Adding the above two inequalities gives
	\begin{equation*}
	\begin{split}
	&\int_{\Omega}\lt|\nabla\lt(y-u\rt)^+\rt|^2dx=\int_{\Omega}\nabla\lt(y-u\rt)\cdot\nabla\lt(y-u\rt)^+dx\\
	&\qd\qd\qd\qd\leq -\int_{\Omega}\lt(f+f^-\rt)(y-u)^+dx = -\int_{\Omega} f^{+}(y-u)^+dx\leq 0,
	\end{split}
	\end{equation*}
	and thus $y\leq u$ a.e. in $\Omega$. Hence we conclude that $y=u$ and as discussed above, it follows that $-f^-\leq -\Delta u$ in $H^{-1}$.
	
	To show that $-\Delta u\leq f^+$ in $H^{-1}$, we consider $z\in H^1_0(\Omega)$ such that
	\begin{equation}\label{ep5}
	z\leq u\text{ a.e.},\qd \int_{\Omega}\nabla z\cdot\nabla\lt(w-z\rt)dx\geq \int_{\Omega}f^+\lt(w-z\rt)dx \text{ for all } w\in H^1_0(\Omega), w\leq u.
	\end{equation}
	Following the same lines as above (see the proof of Theorem 2.1 in Chapter 5, \cite{ro} for details), one can show that $z=u$. Taking $w=u-v$ for all $v\in H^1_0(\Omega)$ and $v\geq 0$ in (\ref{ep5}) gives $-\Delta u\leq f^+$ in $H^{-1}$.
\end{proof}

\begin{lemma}\label{l21}
	Let $\Omega\subset\R^2$ be a bounded smooth domain. Let $\mb{K}$ be defined in (\ref{ep3}) and $f\in L^2(\Omega)$. Then the solution $u\in\mb{K}$ of the variational inequality (\ref{ep2}) satisfies $u\in H^2(\Omega)$.
\end{lemma}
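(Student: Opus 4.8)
The plan is to use Lemma~\ref{l20} to promote $-\Delta u$ from a mere element of $H^{-1}(\Omega)$ to an honest $L^2(\Omega)$ function, and then to conclude by classical elliptic regularity for the Dirichlet problem on the smooth domain $\Omega$.

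First I would reinterpret the two-sided bound $-f^-\le -\Delta u\le f^+$ furnished by Lemma~\ref{l20}, which a priori holds in $H^{-1}(\Omega)$: testing against nonnegative elements of $C^\infty_c(\Omega)$, it says exactly that the two distributions $\mu_1:=-\Delta u+f^-$ and $\mu_2:=f^++\Delta u$ are nonnegative. Since a nonnegative distribution on $\Omega$ is represented by a nonnegative Radon measure, $\mu_1$ and $\mu_2$ are nonnegative measures. Adding the two relations gives $\mu_1+\mu_2=f^-+f^+=|f|$, which, since $f\in L^2(\Omega)\subset L^1(\Omega)$, is a finite absolutely continuous measure with $L^2$ density. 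From $0\le\mu_i\le\mu_1+\mu_2=|f|\,dx$ and the Radon--Nikodym theorem, each $\mu_i$ is absolutely continuous with respect to Lebesgue measure with density bounded a.e.\ by $|f|$; hence $\mu_i\in L^2(\Omega)$, and therefore $-\Delta u=\mu_1-f^-\in L^2(\Omega)$, with $\lVert\Delta u\rVert_{L^2(\Omega)}\le\lVert f\rVert_{L^2(\Omega)}$.

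Once $-\Delta u\in L^2(\Omega)$ is in hand, I would apply the standard global $W^{2,2}$ estimate for the Dirichlet Laplacian on the smooth bounded domain $\Omega$ (see, e.g., Theorem~8.12 in \cite{GT}) to the problem $-\Delta u=g$, $u\in H^1_0(\Omega)$, $g\in L^2(\Omega)$, which yields $u\in H^2(\Omega)$ and finishes the proof. The only point that is not routine bookkeeping is recognizing the one-sided distributional inequalities from Lemma~\ref{l20} as nonnegative measures and squeezing them between $0$ and $|f|\,dx$ to land in $L^2$; after that the conclusion is immediate from classical elliptic theory. (The same argument, with the $W^{2,2}$ estimate replaced by the Calder\'on--Zygmund $W^{2,p}$ estimate, gives the $L^p$ version used later in Proposition~\ref{p19} once $f\in L^p$.)
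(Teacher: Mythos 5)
Your proof is correct, but it takes a genuinely different route from the paper's. The paper stays entirely within Hilbert-space duality: for $v\in H^1_0(\Omega)$ it splits $v=v^+-v^-$, applies the two one-sided bounds of Lemma~\ref{l20} to $v^+$ and $v^-$ separately to get $\lt|\langle -\Delta u-f,v\rangle\rt|\leq \lVert f\rVert_{L^2}\lVert v\rVert_{L^2}$, and then uses density of $H^1_0(\Omega)$ in $L^2(\Omega)$ to extend $-\Delta u-f$ to a bounded functional on $L^2$, concluding $-\Delta u\in L^2(\Omega)$ with $\lVert\Delta u\rVert_{L^2}\leq 2\lVert f\rVert_{L^2}$ before invoking Theorem 8.12 of \cite{GT}. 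You instead read the inequalities of Lemma~\ref{l20} as saying that $-\Delta u+f^-$ and $f^++\Delta u$ are nonnegative distributions, hence nonnegative Radon measures, whose sum is the absolutely continuous measure $|f|\,dx$; squeezing via Radon--Nikodym gives the pointwise bound $|\Delta u|\leq |f|$ a.e., and then the same elliptic estimate finishes. Your route requires the representation of nonnegative distributions by measures and Radon--Nikodym, whereas the paper's needs only Cauchy--Schwarz and density; in exchange you get the sharper constant $\lVert\Delta u\rVert_{L^2}\leq\lVert f\rVert_{L^2}$ and, more usefully, the pointwise domination $|\Delta u|\leq|f|$, which immediately yields the $L^p$ statement of Proposition~\ref{p19} for every $p$ without repeating the duality argument there, as the paper does. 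Both arguments are sound and both use Lemma~\ref{l20} as the key input.
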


\begin{proof}
	We denote by $H=L^2(\Omega)$ and $V=H^1_0(\Omega)$. Further we denote by $\langle\cdot,\cdot\rangle_{H}$ and $\langle\cdot,\cdot\rangle_{V}$ the duality between $H'$ and $H$ and between $V'$  and $V$, respectively. Using Lemma \ref{l20}, we have $-\Delta u-f\leq f^+-f=f^-$ and $-\Delta u-f\geq -f^--f=-f^+$ in $V'$. Now for all $v\in V$, using the Cauchy-Schwartz inequality, we have
	\begin{equation}\label{ep7}
	\begin{split}
	\langle -\Delta u - f, v\rangle_V &= \langle -\Delta u - f, v^+\rangle_V - \langle -\Delta u - f, v^-\rangle_V\\
	&\leq \langle f^-, v^+\rangle_H + \langle f^+, v^-\rangle_H \\
	&\leq \lVert f^-\rVert_H\lVert v^+\rVert_H+\lVert f^+\rVert_H\lVert v^-\rVert_H\leq \lVert f\rVert_H\lVert v\rVert_H.
	\end{split}
	\end{equation}
	Similarly we have
	\begin{equation}\label{ep8}
	\begin{split}
	\langle -\Delta u - f, v\rangle_V &= \langle -\Delta u - f, v^+\rangle_V - \langle -\Delta u - f, v^-\rangle_V\\
	&\geq -\langle f^+, v^+\rangle_H - \langle f^-, v^-\rangle_H \\
	&\geq -\lVert f^+\rVert_H\lVert v^+\rVert_H-\lVert f^-\rVert_H\lVert v^-\rVert_H\geq -\lVert f\rVert_H\lVert v\rVert_H.
	\end{split}
	\end{equation}
	As $V$ is dense in $H$, $-\Delta u-f$ can be extended to be a bounded linear operator on $H$ and thus $-\Delta u\in L^2(\Omega)$. Further we have $\lVert -\Delta u\rVert_{L^2(\Omega)}\leq 2\lVert f\rVert_{L^2(\Omega)}$. It follows from standard elliptic regularity (see, e.g., Theorem 8.12 in \cite{GT}) that $u\in H^2(\Omega)$.
\end{proof}

\begin{proof}[Proof of Proposition \ref{p19}]
	From Lemma \ref{l21} we know $u\in H^2(\Omega)$, and thus $-f^{-}\leq -\Delta u-f\leq f^+$ a.e. in $\Omega$ by Lemma \ref{l20}. We claim that $-\Delta u\in L^p(\Omega)$. To this end, we carry out arguments similar to those in (\ref{ep7}) and (\ref{ep8}) to find that for all $v\in L^2(\Omega)\subset L^{p'}(\Omega)$ for $\frac{1}{p}+\frac{1}{p'}=1$, we have
	\begin{equation*}
	\int_{\Omega}\lt(-\Delta u-f\rt)vdx \leq \int_{\Omega}\lt(f^-v^++f^+v^-\rt)dx\leq \int_{\Omega}|f||v|dx\leq\lVert f\rVert_{L^p}\lVert v\rVert_{L^{p'}}
	\end{equation*}
	and 
	\begin{equation*}
	\int_{\Omega}\lt(-\Delta u-f\rt)vdx \geq -\int_{\Omega}\lt(f^+v^++f^-v^-\rt)dx\geq -\int_{\Omega}|f||v|dx\geq-\lVert f\rVert_{L^p}\lVert v\rVert_{L^{p'}}.
	\end{equation*}
	As $L^2(\Omega)$ is dense in $L^{p'}(\Omega)$, it follows that $-\Delta u-f$ can be extended to be a bounded linear operator on $L^{p'}(\Omega)$ and thus $-\Delta u\in L^p(\Omega)$ and $\lVert -\Delta u\rVert_{L^p}\leq 2\lVert f\rVert_{L^p}$. It follows from standard elliptic regularity (see, e.g., Theorem 9.13 in \cite{GT}) that $u\in W^{2,p}(\Omega)$ and the estimate (\ref{ep6}) holds true.
\end{proof}

\end{document}